\title{On the Table of Marks of a Direct Product of Finite Groups.}
\author{Brendan Masterson}
\address{B.M.: Department of Design Engineering and Mathematics, Middlesex University, London, The Boroughs, London NW4 4BT, United Kingdom}
\email{b.masterson@mdx.ac.uk}
\author{G\"otz Pfeiffer}
\address{G.P.: School of Mathematics, Statistics and Applied Mathematics,
  National University of Ireland, Galway,  University Road,
  Galway, Ireland}
\email{goetz.pfeiffer@nuigalway.ie}
\date{\today}
\newtheorem{thm}{Theorem}[section]
\newtheorem{lem}[thm]{Lemma}
\newtheorem{prop}[thm]{Proposition}
\newtheorem{cor}[thm]{Corollary}
\theoremstyle{definition}
\newtheorem{defn}[thm]{Definition}
\newtheorem{rmk}[thm]{Remark}
\newtheorem{ex}[thm]{Example}
\newcommand{\Sub}[1]{\mathcal{S}_{#1}}
\newcommand{\Sec}[1]{\mathcal{Q}_{#1}}
\newcommand{\Mor}[1]{\mathcal{M}_{#1}}
\newcommand{\Atm}[1]{A_{#1}}
\newcommand{\Aut}{\mathrm{Aut}}
\newcommand{\Inn}{\mathrm{Inn}}
\newcommand{\Out}{\mathrm{Out}}
\newcommand{\id}{\mathrm{id}}
\newcommand{\op}{\mathrm{op}}
\newcommand{\im}{\mathop{\mathrm{im}}}
\newcommand{\diag}{\mathrm{diag}}
\newcommand{\Q}{\mathbb{Q}}
\newcommand{\Z}{\mathbb{Z}}
\newcommand{\CIM}{\mathbf{A}}
\newcolumntype{K}[1]{>{\centering\arraybackslash}p{#1}}
\begin{document}

\begin{abstract}
We present a method for computing the table of marks of a direct
product of finite groups.  In contrast to the character table of a direct product of two finite groups, its table of marks is not simply
the Kronecker product of the tables of marks of the two groups.
Based on a decomposition of the inclusion order on the subgroup lattice
of a direct product as a relation product of three smaller partial orders,
we describe the table of marks of the direct product essentially
as a matrix product of three class incidence matrices.  Each of these
matrices is in turn described as a sparse block diagonal matrix.
As an application, we use a variant of this matrix product to construct
a ghost ring and a mark homomorphism for the rational double Burnside
algebra of the symmetric group~$S_3$.
\end{abstract}

\keywords{Burnside ring,
Table of marks,
Subgroup lattice,
Double Burnside ring,
Ghost ring,
Mark homomorphism}

 \subjclass[2010]{19A22; 06A11}

\maketitle

\section{Introduction}
\label{sec:introduction}
The table of marks of a finite group $G$ was first introduced by William Burnside in his book \emph{Theory of groups of finite order} \cite{Burnside1911}. This table characterizes the actions of $G$ on transitive $G$-sets, which are in bijection to the conjugacy classes of subgroups of $G$. Thus the table of marks provides a complete classification of the permutation representations of a finite group $G$ up to equivalence.

 The Burnside ring  $B(G)$ of $G$ is the Grothendieck ring of the category of finite $G$-sets. The table of marks of $G$ arises as the matrix of the mark homomorphism from $B(G)$ to the free $\Z$-module $\Z^r$, where $r$ is the number of conjugacy classes of subgroups of $G$. Like the character table, the table of marks is an important invariant of the group $G$. By a classical theorem of Dress \cite{Dress1969}, $G$ is solvable if
 and only if the prime ideal spectrum of $B(G)$ is connected, i.e., if $B(G)$ has no nontrivial idempotents, a property that can easily be derived from the table of marks of $G$.

 The table of marks of a finite group $G$ can be determined by
  counting inclusions between conjugacy classes of subgroups of $G$
  \cite{Pfeiffer1997}.  For this, the subgroup lattice of $G$ needs to be known.  As
  the cost of complete knowledge of the subgroups of $G$ increases
  drastically with the order of $G$ (or rather the number of prime
  factors of that order), this approach is limited to small
  groups.  Alternative methods for the computation of a table of marks
  have been developed which avoid excessive computations with the
  subgroup lattice of $G$.  This includes a method for computing the
  table of marks of $G$ from the tables of marks of its maximal
  subgroups \cite{Pfeiffer1997}, and a method for computing the table of
  marks of a cyclic extension of $G$ from the table of marks of $G$
  \cite{NaughtonPfeiffer2012}.

The purpose of this article is to develop tools for the computation of the table of marks of a direct product of finite groups $G_{1}$ and $G_{2}$. The obvious idea here is to relate the subgroup lattice of $G_{1} \times G_{2}$ to the subgroup lattice of $G_{1}$ and $G_{2}$, and to compute the table of marks of $G_{1} \times G_{2}$ using this relationship. Many properties of $G_{1} \times G_{2}$ can be derived from the properties of $G_{1}$ and $G_{2}$ with little or no effort at all. Conjugacy classes of elements of $G_{1} \times G_{2}$, for example, are simply pairs of conjugacy classes of $G_{1}$ and $G_{2}$. And the character table of $G_{1} \times G_{2}$ is simply the Kronecker product of the character tables of $G_{1}$ and $G_{2}$. However the relationship between the table of marks of $G_{1} \times G_{2}$ and the tables of marks of $G_{1}$ and $G_{2}$ is much more intricate.

A flavour of the complexity to be expected is already given by a classical result known as  Goursat's Lemma (Lemma~\ref{goursat}), according to which the subgroups of a direct product of finite groups $G_{1}$ and $G_{2}$ correspond to isomorphisms between sections of $G_{1}$ and $G_{2}$.
 This article presents the first general and systematic study of the subgroup lattice of a direct product of finite groups beyond Goursat's Lemma.  Only very special cases of such subgroup lattices have been considered so far, e.g., by Schmidt \cite{Schmidt1978} and Zacher \cite{Zacher1981}.

In view of Goursat's Lemma, it seems appropriate to first develop some theory for sections in finite groups.  Here, a section of a finite group $G$ is a pair $(P, K)$ of subgroups $P, K$ of $G$ such that $K$ is a normal subgroup of $P$.  We study sections by first defining a partial order $\leq$ on the set of sections of $G$ as componentwise inclusion of subgroups: $(P', K') \leq (P, K)$ if $P' \leq P$ and $K' \leq K$.  Now, if $(P', K') \leq (P, K)$, the canonical  homomorphism $P'/K' \to P/K$ decomposes as a product of three maps: an epimorhism, an isomorphism and a monomorphism.  We show that this induces a decomposition of the partial order $\leq$ as a product of three partial orders, which we denote by $\leq_K$, $\leq_{P/K}$, and $\leq_P$ for reasons that will become clear in Section \ref{sec:sections}.  Thus
  \[ {\leq}  =  {\leq_K} \circ {\leq_{P/K}} \circ {\leq_P}, \]
and this decomposition of the partial order is compatible with the conjugation action of $G$ on the set of its sections.

The description of subgroups of $G_1 \times G_2$ in terms of sections of $G_1$ and $G_2$ allows us to transfer the decomposition of the partial orders on the sections to the set of subgroups of $G_1 \times G_2$.  We will show in Section \ref{sec:subgroups} that, for subgroups $L \leq M$ of $G_1 \times G_2$, there exist unique intermediary subgroups $L'$ and $M'$ such that
  \[L \leq_P L' \leq_{P/K} M' \leq_K M,\]
where the partial orders $\leq_P$, $\leq_{P/K}$ and $\leq_K$ on the set of subgroups of $G_1 \times G_2$ are defined in terms of the corresponding relations on the sections of $G_1$ and $G_2$.  This gives a decomposition of the partial order $\leq$ on subgroups into three partial orders which is compatible with the conjugation action of $G_1 \times G_2$.  In Section \ref{sec:tableofmarks}, we will show as one of our main results that this yields a corresponding decomposition of the table of marks of $G$ as a matrix product of three class incidence matrices.  Individually, each of these class incidence matrices has a block diagonal structure which is significantly easier to compute than the subgroup lattice of $G_1 \times G_2$.

The rest of this paper is arranged as follows: In Section
\ref{sec:preliminaries} we collect some useful known results. In Section
\ref{sec:sections} we study the sections of a finite group $G$ and discuss
properties of the lattice of sections, partially ordered componentwise.  We
show how a decomposition of this partial order as a relation product of three
partial orders leads to a corresponding decomposition of the class incidence
matrix of the sections of $G$ as a matrix product. This section concludes
with a brief discussion of an interesting variant $\leq'$ of the partial
order on sections, and its class incidence matrix. Section
\ref{sec:morphisms} considers isomorphisms from sections of $G$ to a
particular group $U$ as subgroups of $G \times U$. We determine the structure
of the set of all such isomorphisms as a $(G, \Aut(U))$-biset.  In Section
\ref{sec:subgroups}, we study subgroups of $G_1 \times G_2$ as pairs of such
isomorphisms, one from a section of $G_{1}$ into $U$, and one from
$G_2$. This allows us to determine the structure of the set of all such
subgroups as a $(G_1 \times G_2, \Aut(U))$-biset. We also derive a
decomposition of the subgroup inclusion order of $G_{1} \times G_{2}$ as a
relation product of three partial orders from the corresponding decomposition
of the partial orders of sections from Section \ref{sec:sections}. In Section
\ref{sec:tableofmarks} we develop methods for computing the individual class
incidence matrices for each of the partial orders on subgroups and use these
matrices to compute the table of marks of $G_{1} \times G_{2}$, essentially
as their product. Finally, in Section \ref{sec:burnside} we present an
application of the theory. The double Burnside ring $B(G, G)$ of a finite
group $G$ is defined as the Grothendieck ring of transitive
$\left( G, G \right)$-bisets and, where addition is defined as disjoint union
and multiplication is tensor product. The double Burnside ring is currently
at the centre of much research and is an important invariant of the group
$G$, see
e.g. \cite{BoltjeDanz2012,BoltjeDanz2013,bouc13:_simpl_burns,ragnarsson13:_satur_burns}. Here
we study the particular case of $G = S_3$, and use our partial orders to
construct an explicit ghost ring and mark homomorphism for $\Q B(G, G)$, in
the sense of Boltje and Danz~\cite{BoltjeDanz2012}.\\


\noindent\textbf{Acknowledgement:} Much of the work in this article is based on the first author's PhD thesis (see \cite{Masterson2016}). This research was supported by the College of Science, National University of Ireland, Galway.


\section{Preliminaries}
\label{sec:preliminaries}

\subsection{Notation.} We denote the symmetric group of degree $n$ by $S_n$,
the alternating group of degree $n$ by $A_n$, and a cyclic group of order $n$
simply by $n$.

We use various forms of composition in this paper.
Group homomorphisms act from the right and are composed accordingly:
the product of $\phi \colon G_1 \to G_2$ and $\psi\colon G_2 \to G_3$
is $\phi \cdot \psi \colon G_1 \to G_3$, defined by $a^{\phi \cdot \psi}
= (a^{\phi})^{\psi}$, for $a \in G_1$, where $G_i$ is a group, $i = 1,2,3$.

The relation product of relations $R \subseteq X \times Y$ and
$S \subseteq Y \times Z$ is the relation
$S \circ R = \{(x, z) : (x, y) \in R \text{ and } (y, z) \in S \text{ for
  some } y \in Y\} \subseteq X \times Z$, where $X, Y, Z$ are sets.

In section~\ref{sec:sub-rel}, the product $L * M$ of subgroups $L \leq G_1 \times G_2$ and $M \leq G_2 \times G_3$
will be defined as $(M^{\op} \circ L^{\op})^{\op}$, where $R^{\op} = \{(y, x) : (x, y) \in R\}$ denotes the opposite of $R$.

\subsection{Subgroups as Relations.}\label{sec:sub-rel}
The following  classical result describes subgroups of a direct product as isomorphisms between section quotients.  Here, a section of a finite group $G$ is a pair $(P, K)$ of subgroups of $G$ so that $K \unlhd P$.

\begin{lem}[Goursat's Lemma, \cite{Goursat1889}] \label{goursat}
Let $G_1, G_2$ be groups. There is a bijective correspondence between the subgroups of the direct product $G_1 \times G_2$ and the isomorphisms of the form $\theta \colon P_1/K_1 \to P_2/K_2$,
where $(P_i, K_i)$ is a section of $G_i$, $i = 1,2$.
\end{lem}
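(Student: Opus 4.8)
The plan is to establish Goursat's correspondence by constructing explicit maps in both directions and verifying they are mutually inverse. The key observation is that a subgroup $L \leq G_1 \times G_2$ carries with it two natural projections and two natural "kernels". Specifically, given $L \leq G_1 \times G_2$, I would define $P_i$ as the image of $L$ under the projection $\pi_i \colon G_1 \times G_2 \to G_i$, and define $K_1 = \{a \in G_1 : (a, 1) \in L\}$ and $K_2 = \{b \in G_2 : (1, b) \in L\}$. First I would check that $(P_i, K_i)$ is indeed a section of $G_i$: that $P_i$ is a subgroup is clear since it is a homomorphic image, and that $K_i \unlhd P_i$ follows from the fact that $K_i = L \cap (G_i \times 1)$ (identifying factors appropriately) is normal in $L$, hence its projection is normal in $P_i$.

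The heart of the construction is the isomorphism $\theta \colon P_1/K_1 \to P_2/K_2$. I would define it as follows: for $aK_1 \in P_1/K_1$, choose $b \in G_2$ with $(a,b) \in L$ (such $b$ exists since $a \in P_1$), and set $(aK_1)^{\theta} = bK_2$. The main verification here is that $\theta$ is \emph{well defined} and \emph{bijective}, which is where I expect the real content to lie. Well-definedness amounts to showing that if $(a, b)$ and $(a, b') \in L$, then $bK_2 = b'K_2$: indeed $(1, b^{-1}b') = (a,b)^{-1}(a,b') \in L$, so $b^{-1}b' \in K_2$. A symmetric argument shows $\theta$ is injective, and surjectivity follows by symmetry in the roles of $G_1$ and $G_2$. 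That $\theta$ is a homomorphism is a direct computation from the group operation in the direct product.

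For the reverse direction, given sections $(P_i, K_i)$ and an isomorphism $\theta \colon P_1/K_1 \to P_2/K_2$, I would define
\[
  L_{\theta} = \{(a, b) \in P_1 \times P_2 : (aK_1)^{\theta} = bK_2\}.
\]
I would verify that $L_{\theta}$ is a subgroup of $G_1 \times G_2$ (routine, using that $\theta$ is a homomorphism), and then check that the two constructions are mutually inverse. The step requiring the most care is confirming that passing from $L$ to its associated data $(P_i, K_i, \theta)$ and back recovers $L$ exactly, and conversely; this hinges on recognizing that an element $(a,b)$ lies in $L$ precisely when $a \in P_1$, $b \in P_2$, and $\theta$ relates their cosets, which is essentially the defining property of $\theta$ read backwards.

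**The main obstacle** I anticipate is not any single hard argument but rather the bookkeeping of normality and well-definedness, and in particular verifying that $K_i$ is normal in $P_i$ rather than merely in $L$; here one uses that for $a \in P_1$ there is some $b$ with $(a,b) \in L$, and conjugation by $(a,b)$ preserves $L \cap (G_1 \times 1)$, so the projected conjugation by $a$ preserves $K_1$. Once the four pieces of data $(P_1, K_1, P_2, K_2, \theta)$ are correctly extracted and the homomorphism property of $\theta$ is pinned down, the bijectivity of the correspondence follows formally.
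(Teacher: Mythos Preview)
Your proposal is correct and follows essentially the same approach as the paper: both define $P_i$ as the projections of $L$, $K_i$ as the elements paired with the identity, build $\theta$ by sending $aK_1$ to $bK_2$ whenever $(a,b)\in L$, and invert via $L_\theta=\{(a,b):(aK_1)^\theta=bK_2\}$. The only presentational difference is that the paper phrases the well-definedness and bijectivity check in the language of binary relations (observing that $L$ is \emph{difunctional}, i.e., its fibres are cosets), whereas you carry out the same verifications by direct element chasing; your version is in fact more explicit about normality and about the two constructions being mutually inverse.
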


\begin{proof}
Let $L \leq G_1 \times G_2$ and let $P_i \leq G_i$ be the projection
of $L$ onto $G_i$, $i=1,2$.  Then $L$ is a binary relation from $P_1$ to $P_2$.
Writing $a_1 L a_2$ for $(a_1, a_2) \in L$,
it is easy to see that
$\{\{ a_2 \in P_2 : a_1 L a_2 \} : a_1 \in P_1\}$ is a partition of
$P_2$ into cosets of the normal subgroup
$K_2 = \{ a_2 \in G_2 : 1 L a_2 \} $ of $P_2$.  Similarly, the sets
$\{ a_1 \in P_1 : a_1 L a_2 \}$, $a_2 \in P_2$, are cosets of a normal
subgroup $K_1$ of $P_1$.
The relation $L$ thus is difunctional, i.e., it establishes a
bijection $\theta$ between the section quotients $P_1/K_1$ and
$P_2/K_2$, which in fact is a group homomorphism.

Conversely, any isomorphism $\theta \colon P_1/K_1 \to P_2/K_2$
between sections $(P_i, K_i)$ of $G_i$, $i=1,2$, yields a relation
$\{(a_1, a_2) \in G_1 \times G_2 : (a_1 K_1)^{\theta} = a_2 K_2 \}$,
which in fact is a subgroup of $G_1 \times G_2$.
\end{proof}

If a subgroup $L$ corresponds to an isomorphism $\theta: P_1/ K_1 \rightarrow P_2/K_2$, then we write $p_i (L)$ for $P_i$ and $k_i (L)$ for $K_i$,  $i = 1,2$. We call the sections $(P_i, K_i)$ the \emph{Goursat sections} of $L$ and the isomorphism type of $P_i/K_i$ the \emph{Goursat type} of $L$. Finally, $L$ is called the \emph{graph} of $\theta$ and, conversely, $\theta$ is the \emph{Goursat isomorphism} of $L$.

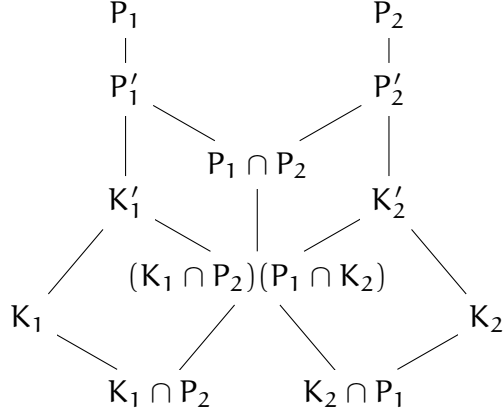
\begin{figure}[ht]
\begin{center}
\begin{tikzpicture}
\tikzstyle{every node} = [fill = white, minimum size = 4pt]

\draw (0, 0) node (1) {$(K_1 \cap P_2) (P_1 \cap K_2)$};
\draw (1) --++ (90:1.5cm) node (2) {$P_1 \cap P_2$};
\draw (1) --++ (230:2cm) node(3) {$K_1 \cap P_2$};
\draw (1) --++ (310:2cm) node (4) {$K_2 \cap P_1$};
\draw (3) --++ (150:2cm) node (5) {$K_1$};
\draw (4) --++ (30:2cm) node (6) {$K_2$};
\draw (1) --++ (30:2cm) node (7) {$K_2'$};
\draw (1) --++ (150:2cm) node (8) {$K_1'$};
\draw (7) --++ (90:1.5cm) node (9) {$P_2'$};
\draw (8) --++ (90:1.5cm) node (10) {$P_1'$};
\draw (10) --++ (90:1cm) node (11) {$P_1$};
\draw (9) --++ (90:1cm) node (12) {$P_2$};

\draw (9) --++ (2);
\draw (10) --++ (2);
\draw (5) --++ (8);
\draw (6) --++ (7);
\end{tikzpicture}
\end{center}
\caption{Butterfly Lemma}\label{fig:butterfly}
\end{figure}

The next lemma, illustrated in Fig.~\ref{fig:butterfly}, can be derived from Lemma~\ref{goursat}, see e.g. \cite{Lambek1958}.
\begin{lem}[Butterfly Lemma, \protect{\cite[11.3]{Huppert1967}}] \label{butters}
Let $(P_1, K_2)$ and $(P_2, K_2)$ be sections of $G$. Set $P'_{i}:= (P_1 \cap P_2) K_{i}$ for $i=1,2$, $K'_1 := (P_1 \cap K_2) K_1$, and $K_2' := (P_2 \cap K_1) K_2$.
Then $P_1 \cap P_2 = P_1' \cap P_2'$, $(K_1 \cap P_2)(P_1 \cap K_2) = K_1' \cap K_2'$ and the canonical map 
\begin{align*}
\phi_{i} : (P_1' \cap P_2') / (K_1' \cap K_2') \rightarrow P_{i}' / K_{i}'
\end{align*}
is an isomorphism, $i = 1,2$.
\end{lem}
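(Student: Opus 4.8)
The plan is to prove the three assertions in turn, observing that the second and third are symmetric in the indices $1$ and $2$, so it suffices to treat the case $i = 1$ and appeal to symmetry for $i = 2$. Throughout I would use two standard tools: the Dedekind modular law (if $A \leq C$ are subgroups and $B$ is arbitrary, then $AB \cap C = A(B \cap C)$) and the isomorphism theorems. The first thing to record is that each of $P'_1 = (P_1 \cap P_2)K_1$ and $K'_1 = (P_1 \cap K_2)K_1$ really is a subgroup, since $K_1 \unlhd P_1$ absorbs the other factor, which lies in $P_1$; this is what makes the whole butterfly of Fig.~\ref{fig:butterfly} a diagram of subgroups. The first identity $P_1 \cap P_2 = P'_1 \cap P'_2$ is then immediate by double inclusion: $P_1 \cap P_2 \leq P'_i$ by construction gives one containment, while $P'_i \leq P_i$ gives the reverse.

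For the second identity, write $D = (K_1 \cap P_2)(P_1 \cap K_2)$. The key step is a single application of the modular law with $P_1 \cap K_2 \leq P_2$, which yields $K'_1 \cap P_2 = (P_1 \cap K_2)K_1 \cap P_2 = (P_1 \cap K_2)(K_1 \cap P_2) = D$; in particular this exhibits $D$ as a subgroup. Since $K'_2 \leq P_2$, intersecting once more gives $K'_1 \cap K'_2 = (K'_1 \cap P_2) \cap K'_2 = D \cap K'_2$, and because both generating factors of $D$ already lie in $K'_2$ one has $D \leq K'_2$, so that $K'_1 \cap K'_2 = D$, as required.

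For the isomorphism I would obtain $\phi_1$ as the composite of the inclusion $P'_1 \cap P'_2 \hookrightarrow P'_1$ with the projection $P'_1 \to P'_1/K'_1$, using the first two identities to rewrite its source as $(P_1 \cap P_2)/D$. Concretely, I would restrict the projection $P'_1 \to P'_1/K'_1$ to the subgroup $P_1 \cap P_2$. Because $P'_1 = (P_1 \cap P_2)K_1$ with $K_1 \leq K'_1$, this restriction is surjective, and its kernel is $(P_1 \cap P_2) \cap K'_1 = P_1 \cap (K'_1 \cap P_2) = P_1 \cap D = D$, using the computation above together with $D \leq P_1$. The first isomorphism theorem then gives $(P_1 \cap P_2)/D \cong P'_1/K'_1$, which is precisely the statement that $\phi_1$ is an isomorphism.

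The main obstacle I anticipate lies not in the isomorphism-theorem step itself but in the bookkeeping that precedes it: one must check that $K'_1$ is genuinely normal in $P'_1$, so that the quotient $P'_1/K'_1$ and hence $\phi_1$ are well defined. I would verify this by noting that each $g \in P_1 \cap P_2$ normalizes both $K_1$ (since $g \in P_1$ and $K_1 \unlhd P_1$) and $P_1 \cap K_2$ (since $g \in P_2$ normalizes $K_2 \unlhd P_2$ while $g \in P_1$ preserves $P_1$), hence normalizes $K'_1 = K_1(P_1 \cap K_2)$; as $K_1 \leq K'_1$ too, all of $P'_1 = K_1(P_1 \cap P_2)$ normalizes $K'_1$. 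Alternatively, following \cite{Lambek1958}, the whole statement can be read off from Goursat's Lemma by interpreting the butterfly as a pair of difunctional relations, but I expect the direct modular-law argument above to be the cleaner and more self-contained route.
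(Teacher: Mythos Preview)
Your argument is correct and complete: the modular-law computation of $K'_1 \cap P_2$, the identification of the kernel, and the normality check are all sound.

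The paper, however, does not supply a proof at all; it simply cites Huppert for the statement and remarks that the result ``can be derived from Lemma~\ref{goursat}, see e.g.\ \cite{Lambek1958}.'' So the only approach the paper points to is the Goursat/relation-theoretic one you mention at the end as an alternative. Your direct route via Dedekind's modular law and the first isomorphism theorem is the classical Zassenhaus argument; it is self-contained and requires no prior machinery, whereas the Lambek approach interprets the two sections as difunctional relations and reads off the isomorphism from the bijection of cosets furnished by Goursat's Lemma. The latter fits the paper's narrative (sections as relations, subgroups of direct products) more naturally, but your proof is the one a reader unfamiliar with that viewpoint would find easiest to verify.
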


We refer to the section $(P_1' \cap P_2', K_1' \cap K_2')$ as the \emph{Butterfly meet} of $(P_1, K_1)$ and $(P_2, K_2)$.

Let $G_1$, $G_2$, $G_3$ be
finite groups.  The product $L \ast M$ of subgroups
$L \leq G_1 \times G_2$ and $M \leq G_2 \times G_3$ is defined as
\begin{align*}
 L \ast M = \{(g_1, g_3) \in G_1 \times G_3 : (g_1, g_2) \in L \text{ and } (g_2,g_3) \in M \text{ for some } g_2 \in G_2 \}\text.
\end{align*}
Then  $L * M \subseteq G_1 \times G_3$ is in fact a subgroup.
Thanks to \cite{BoltjeDanz2013}, we obtain the Goursat isomorphism of $L \ast M$
by composing Goursat isomorphisms $\theta'$ and $\psi'$, as follows. Suppose
that $L$ is the graph of the isomorphism
$\theta \colon P_0/K_0 \to P_1/K_1$, and that $M$ is the graph of
$\psi \colon P_2/K_2 \to P_3/K_3$.  With both $(P_1,K_1)$ and $(P_2, K_2)$ being sections of $G_2$, let subgroups $P_i'$, $K_i'$, and isomorphisms
$\phi_i \colon (P_1' \cap P_2')/(K_1' \cap K_2') \to P_i'/K_i'$, $i=1,2$,
be as in the Butterfly Lemma~\ref{butters}.
Let
$\overline{\psi} \colon P_2'/K_2' \to P_3'/K_3'$ be the
isomorphism obtained by restricting
$\psi$ to $P_2'/K_2'$, defined by
$(p K_2')^{\overline{\psi}} = (p K_2')^{\psi} $ for $p \in P_2'$.
Moreover, let $\overline{\theta} \colon P_0'/K_0' \to P_1'/K_1'$ be the co-restriction of
$\theta$ to $P_1'/K_1'$, defined by
$(p K_0')^{\overline{\theta}} = (p K_0')^{\theta} $ for $p \in P_0'$.
Then the graph of
\begin{align*}
\theta' := \overline{\theta} \cdot \phi_1^{-1} \colon P_0'/K_0' \to (P_1' \cap P_2')/(K_1' \cap K_2')
\end{align*}
is a subgroup of $G_1 \times G_2$ (although not necessarily of $L$),
the graph of
\begin{align*}
\psi':= \phi_2 \cdot \overline{\psi} \colon (P_1' \cap P_2')/(K_1' \cap K_2') \to P_3'/K_3'
\end{align*}
is a subgroup of $G_2 \times G_3$.

\begin{lem}\label{lm:star_butter}
  With the above notation, $L * M$ is the graph of the composite
  isomorphism $\theta' \cdot \psi' \colon P_0'/K_0' \to P_3'/K_3'$.
\end{lem}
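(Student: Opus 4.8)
The plan is to set $N$ to be the graph of the composite isomorphism $\theta' \cdot \psi' \colon P_0'/K_0' \to P_3'/K_3'$ and to prove $L * M = N$ by double inclusion, arguing elementwise. Since $L*M$ is already known to be a subgroup and $N$ is one by construction, and a subgroup of $G_1 \times G_3$ is determined by its underlying relation via Lemma~\ref{goursat}, it suffices to show that the two relations coincide. Throughout I would use the explicit form of the Butterfly maps, namely that $\phi_i$ is the canonical isomorphism $x(K_1' \cap K_2') \mapsto x K_i'$, together with the identities $P_i' = (P_1 \cap P_2)K_i$ and $K_2' = (P_2 \cap K_1)K_2$ from Lemma~\ref{butters}; in particular $P_1 \cap P_2 \subseteq P_1' \cap P_2'$.

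For the inclusion $L * M \subseteq N$ I would take $(g_1, g_3) \in L*M$ with a witness $g_2 \in G_2$ satisfying $(g_1, g_2) \in L$ and $(g_2, g_3) \in M$. Then $g_2$ lies in $p_2(L) = P_1$ and in $p_1(M) = P_2$, so $g_2 \in P_1 \cap P_2 \subseteq P_1' \cap P_2'$. The relation $(g_1 K_0)^\theta = g_2 K_1 \in P_1'/K_1$ places $g_1$ in $P_0'$, and $(g_2 K_2)^\psi = g_3 K_3 \in P_3'/K_3$ places $g_3$ in $P_3'$. It then remains to follow the coset $g_1 K_0'$ through the four factors of $\theta'\cdot\psi' = \overline\theta\cdot\phi_1^{-1}\cdot\phi_2\cdot\overline\psi$: the map $\overline\theta$ sends it to $g_2 K_1'$; $\phi_1^{-1}$ to $g_2(K_1'\cap K_2')$ (legitimate precisely because $g_2 \in P_1' \cap P_2'$); $\phi_2$ to $g_2 K_2'$; and $\overline\psi$, being induced by $\psi$, to $g_3 K_3'$ since $(g_2 K_2)^\psi = g_3 K_3$. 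Hence $(g_1, g_3) \in N$.

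The reverse inclusion $N \subseteq L*M$ is where the real work lies, and I expect it to be the main obstacle. Given $(g_1, g_3) \in N$, the coset $(g_1 K_0)^\theta$ equals $g_2^* K_1$ for some $g_2^* \in P_1'$; writing $g_2^* = xk$ with $x \in P_1 \cap P_2$ and $k \in K_1$ (possible as $P_1' = (P_1\cap P_2)K_1$) produces a candidate witness $x$ with $(g_1, x) \in L$ already. The difficulty is that running $x$ through the chain only shows $(xK_2')^{\overline\psi} = g_3 K_3'$, i.e.\ $(xK_2)^\psi = yK_3$ with $y \in g_3 K_3'$, rather than the required $(xK_2)^\psi = g_3 K_3$. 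To repair this I would replace $x$ by $xk'$ with $k' \in K_1 \cap P_2$, which preserves $(g_1, xk') \in L$ because $xK_1 \cap P_2 = x(K_1 \cap P_2)$. The key point is that, since $K_2' = (P_2 \cap K_1)K_2$, the natural composite $K_1 \cap P_2 \to K_2'/K_2 \xrightarrow{\ \psi\ } K_3'/K_3$ is surjective; combined with $y^{-1}g_3 \in K_3'$ this lets me choose $k'$ with $(k'K_2)^\psi = y^{-1}g_3 K_3$, whence $(xk'K_2)^\psi = g_3 K_3$, giving $(xk', g_3) \in M$ and therefore $(g_1, g_3) \in L*M$.

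I would organise the bookkeeping by recording once how each of $\overline\theta,\phi_1,\phi_2,\overline\psi$ acts on a coset represented by an element of $P_1 \cap P_2$, so that both inclusions reduce to the same chain of coset computations read in opposite directions. The only genuinely delicate ingredient is the surjectivity invoked in the reverse direction, which is exactly the Butterfly identity $K_2' = (P_2 \cap K_1)K_2$ reinterpreted as a statement about $\psi$-images; everything else is routine coset chasing licensed by $P_i' = (P_1\cap P_2)K_i$ and the induced nature of $\overline\theta$ and $\overline\psi$.
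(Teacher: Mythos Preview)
Your proof is correct. Both inclusions are argued soundly: the forward direction is routine once you observe $g_2 \in P_1 \cap P_2 = P_1' \cap P_2'$, and in the reverse direction your adjustment of the witness $x$ to $xk'$ with $k' \in K_1 \cap P_2$ is exactly the right move, justified by the Butterfly identity $K_2' = (P_2 \cap K_1)K_2$ which makes the composite $K_1 \cap P_2 \to K_2'/K_2 \xrightarrow{\psi} K_3'/K_3$ surjective.

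As for comparison: the paper does not actually prove this lemma. It is stated as a consequence of results in Boltje--Danz~\cite{BoltjeDanz2013}, with the surrounding text (``Thanks to \cite{BoltjeDanz2013}, we obtain the Goursat isomorphism of $L \ast M$ \dots'') serving as the justification. Your argument is therefore not a different route to the same destination but a self-contained elementary proof where the paper simply cites one. What your approach buys is independence from the external reference and an explicit demonstration that the Butterfly data suffice; what the citation buys is brevity.
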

We use the subgroup product and its Goursat isomorphism in the proof of Theorem~\ref{sameu}.

\subsection{Bisets and Biset Products.}
The action of a direct product $G_1 \times G_2$ on a set $X$
is sometimes more conveniently described
as the two groups $G_i$ acting on the same set $X$, one from the left and one
from the right.
\begin{defn}[\protect{\cite[2.3.1]{Bouc2010}}]\label{def:biset}
Let $G_1$ and $G_2$ be groups. Then a $(G_1, G_2)$-\emph{biset} $X$ is a left $G_1$-set and a right $G_2$-set, such that the actions commute, i.e.,
\[
 (g_1 x) g_2 = g_1 (x  g_2), \quad g_i \in G_i,\, x \in X\text.
\]
\end{defn}

Under suitable conditions, bisets can be composed, as follows.
\begin{defn} \label{def:biset_comp}
Let $G_1$, $G_2$ and $G_3$ be groups. If $X$ is a $(G_1, G_2)$-biset and $Y$ a $(G_2, G_3)$-biset, the tensor product of $X$ and $Y$ is the $(G_1, G_3)$-biset
\[
X \times_{G_2} Y := (X\times Y)/G_2
\]
of $G_2$-orbits on the set $X \times Y$ under the action given by
$(x, y).g = (x.g, g^{-1}.y)$, $g \in G_2$.
\end{defn}
The tensor product of bisets will be used in Section~\ref{sec:subgroups} to
describe certain sets of subgroups of $G_1 \times G_2$.  It also provides
the multiplication in the double Burnside ring of a group $G$,
which is the subject of Section~\ref{sec:double-burnside}.

\subsection{Action on Pairs.}

We will also need to deal with one group acting on two sets.
The following parametrization of the orbits of a group acting on a set of
pairs is well-known.

\begin{lem}\label{la:orbits-of-pairs}
  Let $G$ be a finite group, acting on finite sets $X$ and $Y$, and suppose
  that $Z \subseteq X \times Y$ is a $G$-invariant set of pairs.  Then
  \begin{align*}
    Z/G = \coprod_{[y]_G \in Y/G} \{ [x, y]_G : [x]_{G_y} \in Zy/G_y \},
  \end{align*}
  where $Zy = \{x \in X : (x, y) \in Z\}$ for $y \in Y$.
\end{lem}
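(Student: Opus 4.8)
The plan is to split the orbit set $Z/G$ into fibres over $Y/G$ by means of the projection onto the second coordinate, and then to identify each fibre with a set of stabiliser orbits. Throughout, $G$ acts diagonally on $X \times Y$, so that $Z$ becomes a $G$-set by restriction of this action.

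First I would observe that the projection $\pi \colon Z \to Y$, $(x, y) \mapsto y$, is $G$-equivariant, and hence descends to a well-defined map $\bar{\pi} \colon Z/G \to Y/G$, $[x, y]_G \mapsto [y]_G$. Partitioning $Z/G$ into the fibres of $\bar{\pi}$ immediately yields a disjoint union indexed by $[y]_G \in Y/G$, and since $[x, y]_G$ maps to $[y]_G$, distinct classes $[y]_G$ contribute disjoint sets of orbits. It therefore remains to show that, for a fixed representative $y$ of the class $[y]_G$, the fibre $\bar{\pi}^{-1}([y]_G)$ coincides with the inner set $\{[x, y]_G : [x]_{G_y} \in Zy/G_y\}$ appearing on the right-hand side.

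To this end I would analyse the map $Zy/G_y \to Z/G$ given by $[x]_{G_y} \mapsto [x, y]_G$. Here I first note that $Zy$ is $G_y$-invariant: if $x \in Zy$ and $g \in G_y$, then $(g x, g y) = (g x, y) \in Z$ by $G$-invariance of $Z$, so $g x \in Zy$, which makes $Zy/G_y$ meaningful. The map is well defined because $G_y \leq G$, and its image plainly lies in the fibre over $[y]_G$. For injectivity, suppose $[x, y]_G = [x', y]_G$; then $g \cdot (x, y) = (x', y)$ for some $g \in G$, which forces $g y = y$, i.e. $g \in G_y$, together with $g x = x'$, so that $[x]_{G_y} = [x']_{G_y}$. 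For surjectivity onto the fibre, any orbit in $\bar{\pi}^{-1}([y]_G)$ contains a pair $(x_0, y_0) \in Z$ with $y_0 \in [y]_G$; choosing $h \in G$ with $h y_0 = y$ replaces it by the representative $(h x_0, y)$, whose first coordinate lies in $Zy$. Taken together, these checks establish the required bijection and complete the identification of the fibre.

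The main obstacle — in truth the only point requiring any care — is the bookkeeping around the choice of representative $y$ within each orbit $[y]_G$, together with the observation that $Zy$ is $G_y$-invariant; once these are in place, the remainder is a routine translation of representatives.
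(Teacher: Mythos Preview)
Your proof is correct and follows essentially the same strategy as the paper's: partition $Z/G$ according to the second coordinate and then identify each fibre with $Zy/G_y$ via $[x]_{G_y} \mapsto [x,y]_G$. The only difference is cosmetic: the paper first decomposes $Z$ as $\coprod_{[y]_G} Z \cap (X \times [y]_G)$ and then invokes a cited lemma (Lemma~2.1 of \cite{NaughtonPfeiffer2012}) for the bijection $X/G_y \to (X \times [y]_G)/G$, whereas you phrase the decomposition as the fibres of the induced map $\bar\pi\colon Z/G \to Y/G$ and verify the bijection by hand; your version is self-contained, but otherwise the arguments coincide.
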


The $G$-orbits of pairs in $Z$ are thus represented by pairs $(x, y)$, where
the $y$ represent the orbits of $G$ on $Y$ and, for a fixed $y$, the $x$
represent the orbits of the stabilizer of $y$ on the set $Zy$ of all $x \in
X$ that are $Z$-related to $y$.

\begin{proof}
  Note that
  \begin{align*}
    Z = \coprod_{[y]_G \in Y/G} Z \cap (X \times [y]_G)
  \end{align*}
  is a disjoint union of $G$-invariant intersections $Z \cap (X \times
  [y]_G)$, whence $Z/G$ is the corresponding disjoint union of orbit spaces
  $(Z \cap (X \times [y]_G))/G$.  By~\cite[Lemma~2.1]{NaughtonPfeiffer2012} ,
  for each $y \in Y$, the map
  \begin{align*}
    [x]_{G_y} \mapsto [x, y]_G
  \end{align*}
  is a bijection between $X/G_y$ and $(X \times [y]_G)/G$.  Hence, for every
  $y \in Y$, there is a bijection between $Zy/G_y$ and
  $(Z \cap (X \times [y]_G))/G$,
\end{proof}

\subsection{Class Incidence Matrices.}
\label{sec:class-incidence-matrix}

Let $(X, \leq)$ be a finite
partially ordered  set (poset)
with incidence matrix
\begin{align*}
  A(\leq) = (a_{xy})_{x, y \in X}, \quad \text{where }
  a_{xy} =
  \begin{cases}
    1\text, & \text{if } y \leq x\text,\\
    0\text, & \text{else.}
  \end{cases}
\end{align*}
This incidence matrix $A(\leq)$ is lower triangular, if the order of rows and
columns of $A(\leq)$ extends the partial order $\leq$.

Suppose further that $\equiv$ is an equivalence relation on $X$.
Then $\equiv$ partitions $X$ into classes $X/{\equiv} = \{[x] : x \in T\}$, for a transversal $T \subseteq X$.
We say that the partial order $\leq$ is \emph{compatible} with the equivalence relation $\equiv$
if, for all classes $[x], [y]$,
the number
\begin{align*}
  \mathbf{a}_{xy} := \#\{x' \equiv x : y \leq x'\}
\end{align*}
does not depend on the choice of the representatives $x, y \in X$,
i.e., if $\mathbf{a}_{xy} = \mathbf{a}_{xy'}$ for $y' \equiv y$.
In that
case, we  define the \emph{class incidence matrix} of the partial
order $\leq$ to be the matrix
\begin{align*}
  \mathbf{A}(\leq) = (\mathbf{a}_{xy})_{x, y \in T}\text,
\end{align*}
whose rows and columns are labelled by the chosen transversal $T$.
Matrix multiplication relates the
matrices $A(\leq)$ and $\mathbf{A}(\leq)$  in the following way.

\begin{lem}\label{classin}
Define a row summing matrix $R(\equiv) = (r_{xy})_{x \in T,\, y \in X}$
and a column picking matrix
$C(\equiv) = (c_{xy})_{x \in X, y \in T}$ with entries
\begin{align*}
  r_{xy} &=
  \begin{cases}
    1\text, & \text{if } x \equiv y\text,\\
    0\text,& \text{else,}
  \end{cases}
&  c_{xy} &=
  \begin{cases}
    1\text, & \text{if } x = y\text,\\
    0\text,& \text{else.}
  \end{cases}
\end{align*}
Then
  \begin{enumerate}
  \item $R(\equiv) \cdot C(\equiv) = I$, the identity matrix on $T$.
  \item $R(\equiv) \cdot A(\leq) = \mathbf{A}(\leq) \cdot R(\equiv)$.
  \item $\mathbf{A}(\leq) = R(\equiv) \cdot A(\leq) \cdot C(\equiv)$,
  \end{enumerate}
\end{lem}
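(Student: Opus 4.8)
The plan is to verify the three claims essentially by direct computation from the definitions of the matrices $R(\equiv)$, $C(\equiv)$, $A(\leq)$, and $\mathbf{A}(\leq)$, treating matrix multiplication as summation over index sets. Throughout, I keep in mind that rows (and the first index) of $R(\equiv)$ and columns (second index) of $C(\equiv)$ are labelled by the transversal $T$, whereas the $X$-labelled indices range over all of $X$.

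For part (i), I would compute the $(x, z)$-entry of the product $R(\equiv) \cdot C(\equiv)$ for $x, z \in T$ as $\sum_{y \in X} r_{xy}\, c_{yz}$. Since $c_{yz} = 1$ forces $y = z$, only the single term $y = z$ survives, giving $r_{xz}$, which equals $1$ exactly when $x \equiv z$. But $x$ and $z$ both lie in the transversal $T$, so $x \equiv z$ holds if and only if $x = z$. Hence the product is the identity matrix on $T$, as claimed. This is the easiest of the three and serves mainly to fix conventions.

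For part (ii), I would compare the $(x, z)$-entries of $R(\equiv) \cdot A(\leq)$ and $\mathbf{A}(\leq) \cdot R(\equiv)$, where now $x \in T$ and $z \in X$. The left-hand side is $\sum_{y \in X} r_{xy}\, a_{yz}$, which counts the elements $y \equiv x$ with $z \leq y$; by the compatibility hypothesis this count equals $\mathbf{a}_{xz'}$ for the representative $z' \in T$ of the class $[z]$, and crucially this value is independent of which element of $[z]$ we picked. The right-hand side is $\sum_{w \in T} \mathbf{a}_{xw}\, r_{wz}$, and since $r_{wz} = 1$ precisely when $w \equiv z$, i.e. when $w$ is the unique transversal representative $z'$ of $[z]$, only that one term survives, yielding $\mathbf{a}_{xz'}$. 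Thus both sides agree. The step where I expect the real content to sit is exactly this identification: I must invoke compatibility of $\leq$ with $\equiv$ to assert that $\#\{y \equiv x : z \leq y\}$ depends only on the classes $[x]$ and $[z]$, so that it legitimately equals the class-incidence entry $\mathbf{a}_{xz'}$; without that hypothesis the left-hand entry would not be well-defined as a function of $[z]$.

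For part (iii), I would simply combine (i) and (ii) algebraically rather than recompute from scratch. Right-multiplying the identity in (ii) by $C(\equiv)$ gives $R(\equiv) \cdot A(\leq) \cdot C(\equiv) = \mathbf{A}(\leq) \cdot R(\equiv) \cdot C(\equiv)$, and then applying (i) to the right-hand factor $R(\equiv) \cdot C(\equiv) = I$ collapses it to $\mathbf{A}(\leq) \cdot I = \mathbf{A}(\leq)$, which is the desired formula. The only subtlety to flag is associativity and the matching of index sets across the triple product, but these are routine once the labelling conventions from Subsection~\ref{sec:class-incidence-matrix} are respected. Overall the main obstacle is conceptual rather than computational: ensuring that the compatibility condition is used precisely where well-definedness of the class-incidence entries is needed in part (ii).
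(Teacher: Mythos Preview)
Your proposal is correct and follows essentially the same approach as the paper's proof: direct entrywise computation for (i) and (ii), then deriving (iii) by right-multiplying (ii) by $C(\equiv)$ and applying (i). Your write-up is simply a more detailed version of the paper's terse argument, including the explicit invocation of compatibility in (ii) that the paper leaves implicit.
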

\begin{proof}
  (i) For each $x, z \in T$, $\sum_{y \in X} r_{xy} c_{yz} = r_{xz}$.
  (ii) For each $x \in T$, $z \in X$, the $x,z$-entry of both matrices
  is equal to $\mathbf{a}_{xy}$, where $y \in T$ represents the class $z \in X$.
  (iii) follows from (ii) and (i).
\end{proof}

\begin{rmk}
Examples of compatible posets are provided by group actions.
Suppose that a finite group $G$ acts on a poset
$(X, \leq)$ in such a way that
\begin{align*}
  x \leq y \implies x.a \leq y.a
\end{align*}
for all $x, y \in X$ and all $a \in G$.  Then $X$ is called a
\emph{$G$-poset}.  The partial order $\leq$ is compatible with the
partition of $X$ into $G$-orbits
since
\begin{align*}
  \{x' \equiv x : y \leq x'\}.a = \{x' \equiv x : y.a \leq x'\}\text,
\end{align*}
for all $x, y \in X$.  We write $R(G)$ and $C(G)$
for $R(\equiv)$ and $C(\equiv)$ if the equivalence $\equiv$ is
given by a $G$-action.
\end{rmk}

\begin{rmk}\label{rmk:comp}
More generally, any square matrix $A$ with rows and columns indexed by a
set $X$ with an equivalence relation $\equiv$, after choosing a transversal
of the equivalence classes, yields a product
$R(\equiv) \cdot A \cdot C(\equiv)$.  We say that the matrix $A$ is
\emph{compatible} with the equivalence if this product does not depend on
the choice of transversal.

If the equivalence on $X$ is induced by the action of a group $G$ then the
matrix $A = (a_{xy})_{x, y \in X}$ is compatible if $a_{x.g,y.g} = a_{xy}$
for all $g \in G$.  Such matrices are the subject of
Proposition~\ref{pro:cim-mor} and Theorem~\ref{sameu}.
\end{rmk}

\subsection{The Burnside Ring and the Table of Marks}

The \emph{Burnside ring} $B(G)$ of a finite group $G$
is the Grothendieck ring of the category of finite $G$-sets,
that is the free abelian group with basis consisting of the
isomorphism classes $[X]$ of transitive $G$-sets $X$,
with disjoint union as addition and the Cartesian product
as multiplication.
Multiplication of transitive $G$-sets is described by Mackey's formula
~\cite[Lemma~1.2.11]{LuxPahlings2010}
\begin{align*}
  [G/A] \cdot [G/B]
  = \sum_{\coprod_d AdB = G} [G/(A^d \cap B)]\text.
\end{align*}
The rational Burnside algebra
$\Q B(G) = \Q \otimes_{\Z} B(G)$ is isomorphic to a direct sum of $r$ copies of $\Q$, one for each conjugacy class of subgroups of $G$, with
products of basis elements determined by the above formula.

The \emph{mark} of a subgroup $H$ of $G$ on a $G$-set $X$ is its number of
fixed points, $|X^H| = \# \{ x \in X: x.h = x \text{ for all } h \in H\}$.
Obviously, $|X^{H_1}| =|X^{H_2}|$ whenever $H_1$ and $H_2$ are conjugate
subgroups of $G$.  The map $\beta_G \colon B(G) \to \Z^r$ assigns to
$[X] \in B(G)$ the vector $(|X^{H_1}|, \dots, |X^{H_r}|) \in \Z^r$, where
$(H_1, \dots, H_r)$ is a transversal of the conjugacy classes of subgroups of
$G$.  In this context, the ring $\Z^r$ with componentwise addition and
multiplication, is called the \emph{ghost ring} of~$G$.  We have
\begin{align*}
  \beta_G([X \amalg Y]) &= \beta_G([X]) + \beta_G([Y]),&
  \beta_G([X \times Y]) &= \beta_G([X]) \cdot \beta_G([Y]),
\end{align*}
where the latter product is componentwise multiplication in $\Z^r$.  Thus
$\beta_G$ is a homomorphism of rings, called the \emph{mark homomorphism}
of~$G$.

The \emph{table of marks} $M(G)$ of $G$ is the $r \times r$-matrix with rows
$\beta_G([G/H_i])$, $i = 1, \dots, r$, the mark vectors of all transitive
$G$-sets, up to isomorphism.  Regarding $\beta_G$ as a linear map from
$\Q B(G)$ to $\Q^r$, the table of marks is the matrix of $\beta_G$ relative
to the natural basis $([G/H_i])$ of $\Q B(G)$ and the standard basis of
$\Q^r$.

As $|(G/H)^K| = |N_G(H) : H| \, \#\{H^g \geq K : g \in G\}$ for subgroups
$H, K \leq G$, the table of marks provides a compact description of the
subgroup lattice of $G$.  In fact
\begin{align*}
  M(G) = D \cdot \CIM(\leq)\text,
\end{align*}
where $D$ is the diagonal matrix with entries $|N_G(H_i) : H_i|$
and $\CIM(\leq)$ is the class incidence matrix of the group $G$
acting on its lattice of subgroups by conjugation.

\begin{ex}
  Let $G = S_3$.  Then $G$ has $4$ conjugacy classes of subgroups and
  \begin{align*}
M(G) = \tiny \left(
    \begin{array}{r|llll}
      G/1 & 6 & . & . & . \\
      G/2 & 3 & 1 & . & . \\
      G/3 & 2 & . & 2 & . \\
      G/G & 1 & 1 & 1 & 1 \\
    \end{array}
\right)
=
\left(
    \begin{array}{llll}
      6 & . & . & . \\
      . & 1 & . & . \\
      . & . & 2 & . \\
      . & . & . & 1 \\
    \end{array}
\right)
 \cdot
\left(
    \begin{array}{llll}
      1 & . & . & . \\
      3 & 1 & . & . \\
      1 & . & 1 & . \\
      1 & 1 & 1 & 1 \\
    \end{array}
\right)\text.
  \end{align*}
\end{ex}


\section{Sections}
\label{sec:sections}

Let $G$ be a finite group.  We denote by $\Sub{G}$ the set of subgroups of
$G$, and by
\begin{align*}
  \Sub{G}/G := \{ [H]_G : H \leq G \}
\end{align*}
the set of conjugacy classes of subgroups of $G$.
A \emph{section} of $G$ is a pair $(P, K)$ of subgroups of $G$ where $K \trianglelefteq P$. We call $P$ the \emph{top group} and $K$ the \emph{bottom group} of the section $(P, K)$. We refer to the quotient group $P/K$ as the \emph{quotient} of the section $(P, K)$. The \emph{isomorphism type} of a section is the isomorphism type of its quotient and the \emph{size} of a section is the size its quotient. We denote the
set of sections of $G$ by
\begin{align*}
\Sec{G} := \left\{ (P, K) : K \trianglelefteq P \leq G \right\}\text.
\end{align*}
The group $G$ acts on the set of pairs $\Sec{G}$ by conjugation.  In
Sections~\ref{sec:conj-class-sect} and~\ref{sec:automizers}, we classify the
orbits of this action and describe the automorphisms induced by the
stabilizer of a section on its quotient.
The partial order on $\Sub{G}$ induces a partial order on
the pairs in $\Sec{G}$. In Section~\ref{sec:sections-lattice}, we show that
this partial order is in fact a lattice, and how it can be decomposed as a product of three smaller
partial order relations.
In Section~\ref{sec:class-incidences},we determine the class incidence matrix of $\Sec{G}$ and show that
the decomposition of the partial order on $\Sec{G}$ corresponds to
a decomposition of the class incidence matrix of $\Sec{G}$ as
a matrix product of three class incidence matrices.
In Section~\ref{sec:sect-latt-revis}, we use the smaller partial orders to define a new partial order on $\Sec{G}$ that is consistent with the notion of size of a section.

\subsection{Conjugacy Classes of Sections.}
\label{sec:conj-class-sect}
 A finite group $G$ naturally acts on its sections through componentwise conjugation via
\[
(P, K)^{g} := (P^{g}, K^{g})\text,
\]
 where $(P, K) \in \Sec{G}$ and $g \in G$.
We write $\left[ P, K \right]_{G}$ for the conjugacy class of a section $(P, K)$ in $G$, and denote the set of all conjugacy classes of sections of $G$ by
\[
\Sec{G}/G := \left\{ \left[ P, K \right]_{G} : (P , K) \in \Sec{G} \right\}\text.
\]
The conjugacy classes of sections can be parametrized in different ways in
terms of simpler actions, as follows.

\begin{prop} \label{la:sec_cong}
Let $G$ and $\Sub{G}$ be as above.
 \begin{enumerate}
 \item For $P \leq G$, let $\Sub{G}^{\unlhd P} = \{K \in \Sub{G} : K \unlhd P\}$.
Then $(\Sub{G}^{\unlhd P}, \leq)$ is an $N_G(P)$-poset and
\begin{align*}
 \Sec{G}/G = \coprod_{[P] \in \Sub{G}/G} \{ [P, K]_G : [K] \in \Sub{G}^{\unlhd P}/N_G(P)\}.
\end{align*}
 \item For $K \leq G$, let $\Sub{G}^{K \unlhd} = \{P \in \Sub{G} : K \unlhd P\}$.
Then $(\Sub{G}^{K \unlhd}, \leq)$ is an $N_G(K)$-poset and
\begin{align*}
 \Sec{G}/G = \coprod_{[K] \in \Sub{G}/G} \{ [P, K]_G : [K] \in \Sub{G}^{K \unlhd}/N_G(K)\}.
\end{align*}
 \end{enumerate}
\end{prop}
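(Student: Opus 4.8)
The plan is to prove both parts as direct applications of the orbit-counting result for pairs, Lemma~\ref{la:orbits-of-pairs}, specialising the group $G$ acting on itself, with the set of sections $\Sec{G}$ realised as a $G$-invariant set of pairs inside $\Sub{G} \times \Sub{G}$. First I would fix the setting for part~(i): take $X = \Sub{G}$ and $Y = \Sub{G}$, both acted on by $G$ via conjugation, and let $Z = \{(K, P) : K \unlhd P \leq G\} \subseteq \Sub{G} \times \Sub{G}$. One checks that $Z$ is $G$-invariant, since $K \unlhd P$ implies $K^g \unlhd P^g$ for every $g \in G$, so componentwise conjugation preserves the normality condition. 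Note the bookkeeping: I am placing the \emph{top} group $P$ in the second ($Y$) coordinate so that the outer union in Lemma~\ref{la:orbits-of-pairs} is indexed by conjugacy classes $[P]_G \in \Sub{G}/G$, matching the statement of part~(i).

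Next I would identify the two pieces that Lemma~\ref{la:orbits-of-pairs} produces. The stabiliser in $G$ of the second coordinate $P$ under conjugation is exactly the normaliser $N_G(P)$. The fibre $ZP = \{K \in \Sub{G} : (K, P) \in Z\}$ is precisely $\{K \in \Sub{G} : K \unlhd P\}$, which is the set $\Sub{G}^{\unlhd P}$ defined in the statement. So I must verify the two subsidiary claims in part~(i): that $(\Sub{G}^{\unlhd P}, \leq)$ is an $N_G(P)$-poset, and that the orbit decomposition holds. For the poset claim, $N_G(P)$ acts on $\Sub{G}^{\unlhd P}$ because conjugation by an element normalising $P$ sends a normal subgroup of $P$ to a normal subgroup of $P$; and this action respects inclusion, $K_1 \leq K_2 \Rightarrow K_1^g \leq K_2^g$, so the $G$-poset condition from the remark after Lemma~\ref{classin} is satisfied. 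Feeding $X$, $Y$, $Z$ into Lemma~\ref{la:orbits-of-pairs} then yields
\begin{align*}
  \Sec{G}/G = \coprod_{[P]_G \in \Sub{G}/G} \{ [P, K]_G : [K]_{N_G(P)} \in \Sub{G}^{\unlhd P}/N_G(P)\},
\end{align*}
which is the claimed formula, once one observes that the orbit $[(K, P)]_G$ of the pair corresponds to the conjugacy class $[P, K]_G$ of the section.

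For part~(ii) I would run the symmetric argument, swapping the roles of the two coordinates: take $Z = \{(P, K) : K \unlhd P\}$ with $P$ now in the $X$-slot and $K$ in the $Y$-slot, so the outer index becomes $[K]_G \in \Sub{G}/G$, the stabiliser becomes $N_G(K)$, and the fibre over $K$ is $\Sub{G}^{K \unlhd} = \{P : K \unlhd P\}$. The same two verifications apply: $\Sub{G}^{K \unlhd}$ is an $N_G(K)$-poset (conjugation by an element of $N_G(K)$ preserves the relation $K \unlhd P$ and respects inclusion), and Lemma~\ref{la:orbits-of-pairs} delivers the decomposition. I expect the whole proof to be short and essentially formal once the pairs are set up correctly; the only genuine point requiring care is the indexing convention for which group sits in which coordinate, since a mismatch would produce the orbit decomposition with the roles of $P$ and $K$ (and hence the normalisers) interchanged. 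There is a small typo to flag in the stated formula of part~(ii), where the inner condition reads $[K] \in \Sub{G}^{K\unlhd}/N_G(K)$ but should read $[P]_{N_G(K)} \in \Sub{G}^{K \unlhd}/N_G(K)$; this is immaterial to the argument, as the correct index variable is the one ranging over the fibre $\Sub{G}^{K \unlhd}$.
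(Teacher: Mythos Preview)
Your proposal is correct and follows essentially the same approach as the paper: both identify $\Sec{G}$ as a $G$-invariant subset of $\Sub{G} \times \Sub{G}$ and invoke Lemma~\ref{la:orbits-of-pairs}, with stabilizers being the relevant normalizers. Your write-up is in fact more careful than the paper's brief proof, explicitly tracking which coordinate plays the role of $Y$ and verifying the $N_G(P)$- and $N_G(K)$-poset claims; your observation about the typo in part~(ii) is also apt.
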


\begin{proof}
(i) Note that $\Sec{G} \subseteq \Sub{G} \times \Sub{G}$
is a $G$-invariant set of pairs.  As the stabilizer of
$K \in \Sub{G}$ is $N_G(K)$, the result follows with Lemma \ref{la:orbits-of-pairs}.
(ii) Follows in a similar way.
\end{proof}

We write $U \sqsubseteq G$ for a finite group $U$ which is isomorphic to a subquotient of $G$.  We denote by $\Sec{G}(U)$ the set of sections of $G$ with isomorphism type $U$, and by
\[
  \Sec{G}(U)/G := \{ \left[P, K\right]_{G} \in \Sec{G}/G : P/K \cong U \}.
\]
its $G$-conjugacy classes. Naturally,
\begin{align*}
\Sec{G}/G = \coprod_{U \sqsubseteq G} \Sec{G}(U)/G\text.
\end{align*}
Each of the above three partitions of $\Sec{G}/G$ will be used in the sequel.

\subsection{Section Automizers}
\label{sec:automizers}

The \emph{automizer} of a subgroup $H$ in $G$ is the quotient group of the section $(N_{G} (H), C_{G} (H))$. The automizer of $H$ is isomorphic to the subgroup of $\Aut (H)$ induced by the conjugation action of $G$. Analogously, we define the automizer of a section as a section whose quotient is isomorphic to the subgroup of automorphisms induced by conjugation by $G$.

\begin{defn}\label{def:sec-autom}
Let $(P, K) \in \Sec{G}$ and set $N = N_{G} (K)$. Using the natural homomorphism
\begin{align*}
\phi : N \rightarrow N/K,
\quad n \mapsto \overline{n} = nK\text,
\end{align*}
we let $\overline{P} := \phi (P) = P/K$ and $\overline{N} := \phi (N) = N/K$.
We define the \emph{section normalizer} of $(P, K)$ to be
the inverse image
$$N_{G} (P, K):= \phi^{-1} (N_{\overline{N}} (\overline{P})),$$
the \emph{section centralizer} to be $$C_{G} (P, K) := \phi^{-1} (C_{\overline{N}} (\overline{P})),$$
 and the \emph{section automizer} to be the section
$$\Atm{G} (P, K) := (N_{G} (P, K), C_{G} (P, K)).$$
Moreover, we denote by $\Aut_{G} (P, K)$ the subgroup of $\Aut(P/K)$ of automorphisms induced by conjugation by $G$, see Fig.~\ref{fig:section-auts}.
\end{defn}

\begin{figure}[ht]
  \begin{tikzpicture}
    \node (K) at (-6,-1) {$K$};
    \node (P) at (-6,2) {$P$};
    \node (meet) at (-6,0) {$P \cap C_G(P,K)$};
    \node (C) at (-3,1) {$C_G(P,K)$};
    \node (N) at (-3,4) {$N_G(P,K)$};
    \node (join) at (-3,3) {$P\,C_G(P,K)$};
    \draw (K) -- (meet) -- (P) -- (join);
    \draw (meet) -- (C) -- (join) --(N);

    \node (KK) at (0,-1) {$1$};
    \node (PK) at (0,2) {$\overline{P}$};
    \node (meetK) at (0,0) {$\overline{P} \cap C_{\overline{N}}(\overline{P})$};
    \node (CK) at (3,1) {$C_{\overline{N}}(\overline{P})$};
    \node (NK) at (3,4) {$N_{\overline{N}}(\overline{P})$};
    \node (joinK) at (3,3) {$\overline{P}\,C_{\overline{N}}(\overline{P})$};
    \draw (KK) -- (meetK) -- (PK) -- (joinK);
    \draw (meetK) -- (CK) -- (joinK) --(NK);

    \node (1) at (6,1) {$1$};
    \node (I) at (6,3) {$\Inn(P/K)$};
    \node (AG) at (6,4) {$\Aut_G(P,K)$};
    \node (A) at (6,5) {$\Aut(P/K)$};
    \draw (1) -- (I) -- (AG) -- (A);

  \end{tikzpicture}
\caption{The section $(P,K)$ and its automorphisms}\label{fig:section-auts}
\end{figure}
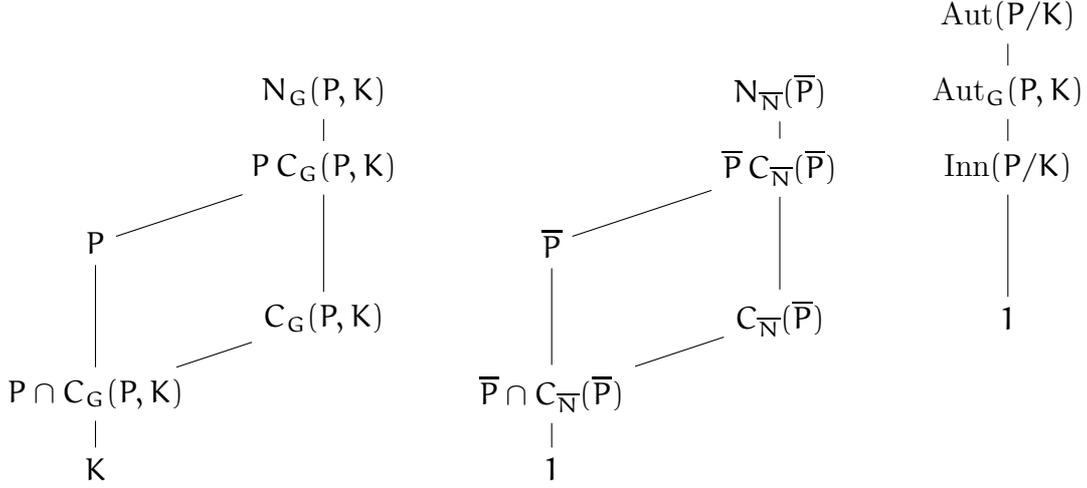

The following properties of these groups are obvious.

\begin{lem}\label{la:norm_cent} Let $(P, K)$ be a section in $\Sec{G}$. Then
\begin{enumerate}
\item $N_{G}(P, K) = N_{G}(P) \cap N_{G}(K)$.
\item $C_{G}(P, K)$ is the set of all $g \in N_{G}(P, K)$ which induce the identity automorphism on $P/K$.
\item $\Inn(G) \leq \Aut_{G}(P,K) \leq \Aut(P/K)$.
\end{enumerate}
\end{lem}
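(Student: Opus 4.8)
The plan is to verify each of the three parts of Lemma~\ref{la:norm_cent} directly from Definition~\ref{def:sec-autom}, using the natural homomorphism $\phi \colon N = N_G(K) \to N/K$ as the central tool. All three statements are ultimately assertions about how conditions in the quotient $\overline{N} = N/K$ pull back to conditions on the original group $G$, so the recurring theme will be to translate the defining preimages $\phi^{-1}(N_{\overline N}(\overline P))$ and $\phi^{-1}(C_{\overline N}(\overline P))$ back into statements about elements of $G$.

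For part~(i), I would show $N_G(P,K) = N_G(P) \cap N_G(K)$ by a double inclusion argument. Since $N_G(P,K)$ is defined as a subset of $N = N_G(K)$, every element of $N_G(P,K)$ already normalizes $K$; the content is the equivalence between $g$ normalizing $P$ and $\overline g = gK$ normalizing $\overline P = P/K$. First I would note that for $g \in N$, the image $\overline{P^g} = (P/K)^{\overline g} = \overline P^{\,\overline g}$, because $\phi$ is a homomorphism and $K \leq P$. Thus $\overline g \in N_{\overline N}(\overline P)$ if and only if $\overline P^{\,\overline g} = \overline P$, i.e. $\overline{P^g} = \overline P$; and since $K \leq P^g$ and $K \leq P$, taking preimages under $\phi$ recovers $P^g = P$. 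Hence $g \in N_G(P,K)$ iff $g \in N_G(K)$ and $g \in N_G(P)$, giving the claim.

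For part~(ii), the key observation is that the conjugation action of $g \in N_G(P,K)$ on $P/K$ is precisely the automorphism $pK \mapsto (pK)^{\overline g} = p^g K$, which is the identity exactly when $p^g K = pK$ for all $p \in P$, i.e. when $\overline g$ centralizes $\overline P$ in $\overline N$. So the set of $g \in N_G(P,K)$ inducing the identity on $P/K$ is $\phi^{-1}(C_{\overline N}(\overline P)) = C_G(P,K)$, as required. Part~(iii) then follows since $\Aut_G(P,K)$ is by definition the image of this conjugation action inside $\Aut(P/K)$, and the subgroup of inner automorphisms $\Inn(P/K)$ is induced by $P$ itself (which lies in $N_G(P,K)$ as $K \unlhd P \leq N_G(K)$ and $P$ normalizes $P$), while every induced automorphism lies in $\Aut(P/K)$ by construction.

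Since the lemma is flagged as having ``obvious'' proofs, I do not expect a serious obstacle; the only point requiring a little care is keeping straight the identification of the conjugation automorphism induced by $\overline g$ with the abstract automorphism of $P/K$, and confirming that normalizing $\overline P$ in $\overline N$ is genuinely equivalent to normalizing $P$ in $G$ rather than merely one implication. Everything reduces to the elementary fact that $\phi$ restricts to a lattice isomorphism between subgroups of $N$ containing $K$ and subgroups of $\overline N$, under which $P \leftrightarrow \overline P$ and conjugation commutes with $\phi$.
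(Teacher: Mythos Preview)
Your proposal is correct and matches the paper's approach, which in fact omits any argument entirely and merely asserts that these properties ``are obvious.'' You have supplied exactly the routine verification from Definition~\ref{def:sec-autom} that the paper leaves implicit, and you have correctly read part~(iii) as $\Inn(P/K) \leq \Aut_G(P,K) \leq \Aut(P/K)$ (the $\Inn(G)$ in the statement is a typo, as Figure~\ref{fig:section-auts} confirms).
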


\subsection{The Sections Lattice.}
\label{sec:sections-lattice}

Subgroup inclusion induces a partial order on the set $\Sec{G}$ of
sections of $G$ which inherits the lattice property from the subgroup
lattice, as follows.

\begin{defn} \label{subsec}
$\Sec{G}$ is a poset, with partial order $\leq$ defined componentwise, i.e.,
  \begin{align*}
    (P', K') \leq (P, K) \text{ if } P' \leq P \text{ and } K' \leq K\text,
  \end{align*}
for sections $(P', K')$ and $(P, K)$  of $G$.
\end{defn}

For subgroups $A, B \leq G$, we write $A \vee B = \langle A, B \rangle$ for
the join of $A$ and $B$ in the subgroup lattice of $G$, and
$\langle\langle A \rangle\rangle_B$ for the normal closure of $A$ in $B$.

\begin{prop}\label{pro:sec-lattice}
  The poset $(\Sec{G}, \leq)$ is a lattice with componentwise meet, i.e.,
  \begin{align*}
    (P_1, K_1) \cap (P_2, K_2) = (P_1 \cap P_2, K_1 \cap K_2)\text,
  \end{align*}
  and join given by
  \begin{align*}
(P_1, K_1) \vee (P_2, K_2) = (P_1 \vee P_2, \left\langle \left\langle K_1 \vee K_2 \right\rangle \right\rangle_{P_1 \vee P_2})\text,
  \end{align*}
for sections $(P_1, K_1)$ and $(P_2, K_2)$ of $G$.
\end{prop}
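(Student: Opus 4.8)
The plan is to establish the two formulas directly by verifying the defining universal properties of meet and join in the componentwise order; since a poset in which every pair of elements possesses both a greatest lower bound and a least upper bound is automatically a lattice, this will suffice.

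First I would treat the meet. The candidate $(P_1 \cap P_2, K_1 \cap K_2)$ is a genuine section: for $g \in P_1 \cap P_2$ and $k \in K_1 \cap K_2$, normality of $K_i$ in $P_i$ gives $k^g \in K_1$ and $k^g \in K_2$, so $k^g \in K_1 \cap K_2$, whence $K_1 \cap K_2 \trianglelefteq P_1 \cap P_2$. It is plainly a lower bound of both $(P_i, K_i)$, and if $(P', K') \leq (P_i, K_i)$ for $i = 1, 2$, then $P' \leq P_1 \cap P_2$ and $K' \leq K_1 \cap K_2$ by the corresponding property of intersection in $\Sub{G}$; hence it is the greatest lower bound.

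The join requires more care, because the naive componentwise candidate $(P_1 \vee P_2, K_1 \vee K_2)$ need not be a section: in general $K_1 \vee K_2$ fails to be normal in $P_1 \vee P_2$. This is precisely where the normal closure enters, and it is the main obstacle. Writing $P = P_1 \vee P_2$ and $K = \langle\langle K_1 \vee K_2 \rangle\rangle_{P}$, the pair $(P, K)$ is a section by construction, and it is an upper bound since $P_i \leq P$ and $K_i \leq K_1 \vee K_2 \leq K$. For the least upper bound property, let $(P', K')$ be any upper bound. Then $P \leq P'$ from the join in $\Sub{G}$, and $K_1 \vee K_2 \leq K'$. The key step is to transfer this containment to the normal closure: for every $g \in P \leq P'$ we have $(K_1 \vee K_2)^g \leq (K')^g = K'$, using $K' \trianglelefteq P'$ together with $g \in P'$. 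Since $K$ is generated by all such conjugates, $K \leq K'$, and therefore $(P, K) \leq (P', K')$.

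I would then conclude that both binary operations exist exactly as described, so $(\Sec{G}, \leq)$ is a lattice. The only genuinely non-formal ingredient is the normal-closure argument for the join, and its crux is the normality of the bottom group $K'$ of an arbitrary upper bound, which is what allows each $P$-conjugate of $K_1 \vee K_2$ to be absorbed into $K'$; everything else reduces to the familiar meet and join properties of the subgroup lattice $\Sub{G}$ applied componentwise.
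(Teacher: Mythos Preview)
Your proposal is correct and follows the same approach as the paper: a direct verification of the universal properties of meet and join. The paper's proof is considerably terser (it simply asserts that the meet is ``clearly'' the greatest lower bound and that it is ``easy to see'' that the join is as claimed), whereas you spell out the key normal-closure absorption argument for the join explicitly; but the underlying reasoning is identical.
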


\begin{proof}
Clearly, $K_1 \cap K_2$ is a normal subgroup of $P_1 \cap P_2$,
and the section $(P_1 \cap P_2, K_1 \cap K_2)$ is the unique greatest lower
bound of the sections $(P_1, K_1)$ and $(P_2, K_2)$ in $\Sec{G}$.

It is also easy to see that the least section $(P, K)$ of $G$ with
$P \geq P_1 \vee P_2$ and $K \geq K_1 \vee K_2$ has $P = P_1 \vee P_2$ and
$K = \langle\langle K_1 \vee K_2 \rangle\rangle_P$.
\end{proof}

\begin{thm} \label{secdecom}
Let $(P', K') \leq (P , K)$ be sections of a finite group $G$. Then
\begin{enumerate}
\item $(P', K\cap P')$  is the largest section between $(P', K')$ and $(P, K)$ with top group $P'$;
\item $(P'K, K)$ is the smallest section between $(P', K')$ and $(P, K)$ with bottom group $K$;
\item the map $p(K \cap P') \mapsto pK$, $p \in P'$ is an isomorphism between the section quotients of $(P', K \cap P')$ and $(P'K, K)$.
\end{enumerate}
\end{thm}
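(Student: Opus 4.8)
The plan is to treat the three claims essentially independently, since each reduces to routine normal-subgroup bookkeeping, with part (iii) being a direct appeal to the second isomorphism theorem. Throughout I would keep in mind the standing hypotheses $K' \unlhd P' \leq P$, $K \unlhd P$ and $K' \leq K$, and recall that a section $(Q, L)$ lies \emph{between} $(P', K')$ and $(P, K)$ precisely when $P' \leq Q \leq P$ and $K' \leq L \leq K$ with $L \unlhd Q$.

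For part (i), I would first check that $(P', K \cap P')$ is a genuine section: since $K \unlhd P$ and $P' \leq P$, the intersection $K \cap P'$ is normal in $P'$. It lies in the stated interval because $K' \leq K$ together with $K' \leq P'$ (the latter since $K' \unlhd P'$) gives $K' \leq K \cap P'$, while $K \cap P' \leq K$ and $P' \leq P$ are clear. For maximality, if $(P', L)$ is any section in the interval with top group $P'$, then $L \unlhd P'$ forces $L \leq P'$, and $L \leq K$ from the interval condition, so $L \leq K \cap P'$; hence $(P', L) \leq (P', K \cap P')$, and $(P', K \cap P')$ dominates every competitor. Part (ii) is dual: $P'K$ is a subgroup of $P$ because $K \unlhd P$ normalises it, and $K \unlhd P'K$, so $(P'K, K)$ is a section lying in the interval with bottom group exactly $K$. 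For minimality, any section $(Q, K)$ in the interval with bottom group $K$ satisfies $P' \leq Q$ and $K \leq Q$ (the latter since $K \unlhd Q$), whence $P'K \leq Q$ and $(P'K, K) \leq (Q, K)$.

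Finally, part (iii) is precisely the second isomorphism theorem applied to $P' \leq P$ and $K \unlhd P$. I would just identify the two section quotients in question as $P'/(K \cap P')$ and $P'K/K$, and observe that the stated map $p(K \cap P') \mapsto pK$ is exactly the canonical isomorphism $P'/(P' \cap K) \cong P'K/K$. The only point across the three parts that demands any care is the repeated verification of the normality conditions ($K \cap P' \unlhd P'$ and $K \unlhd P'K$); none of these presents a genuine obstacle, so the argument is essentially a matter of assembling these standard observations cleanly.
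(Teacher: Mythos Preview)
Your argument is correct; every step is sound and nothing is missing. The paper, however, organizes the proof differently: rather than verifying (i), (ii), (iii) separately, it considers the single canonical homomorphism $\phi \colon P'/K' \to P/K$, $pK' \mapsto pK$, and factors it as $\phi = \phi_1\,\phi_2\,\phi_3$ with $\phi_1$ an epimorphism onto $P'/(K \cap P')$, $\phi_2$ an isomorphism onto $P'K/K$, and $\phi_3$ a monomorphism into $P/K$. The uniqueness of this epi--iso--mono factorization then yields all three claims at once: the kernel of $\phi$ picks out $K \cap P'$ (giving the largest bottom group with top $P'$), the image picks out $P'K$ (giving the smallest top group with bottom $K$), and $\phi_2$ is exactly the isomorphism in (iii). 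Your approach has the advantage of being more self-contained and making the extremality claims in (i) and (ii) explicit, whereas the paper's approach is more conceptual and foreshadows the use of this decomposition later (e.g., in Theorem~\ref{thm:intermediate-subgroups}); in content the two arguments are equivalent.
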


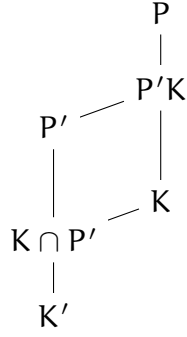
\begin{figure}[ht]
\begin{center}
\begin{tikzpicture}
\tikzstyle{every node} = [fill = white]
\draw (0,0) node (1) {$P$};
\draw (1) --++ (270:1cm) node (2) {$P'K$};
\draw (2) --++ (270:1.5cm) node (3) {$K$};
\draw (2) --++ (200:1.5cm) node (4) {$P'$};
\draw (3) --++ (200:1.5cm) node (5) {$K \cap P'$};
\draw (5) --++ (270:1cm) node (6) {$K'$};
\draw (4) --++ (5);
\end{tikzpicture}
\end{center}
\caption{$(P',K') \leq (P,K)$}\label{fig:sec-leq}
\end{figure}

\begin{proof}
If $(P', K') \leq (P, K)$ then there is a \emph{canonical homomorphism} from $P'/K'$ to $P/K$, given by $(K'p)^{\phi} =  Kp$ for $p \in P'$. According to the homomorphism theorem, $\phi$ can be decomposed into a surjective,  bijective and an injective part, that is $\phi = \phi_1 \phi_2 \phi_3$, where
\begin{align*}
\phi_1 & \colon P'/K' \rightarrow P'/K \cap P',&
\phi_2 & \colon P'/K\cap P' \rightarrow P'K / K,&
\phi_3 & \colon P'K / K \rightarrow P/K
\end{align*}
are uniquely determined, see Fig.~\ref{fig:sec-leq}
\end{proof}

Motivated by the above result we define the following three partial orders on $\Sec{G}$.

\begin{defn} \label{def:subpos}
Let $(P', K') \leq (P, K)$. Then we write
\begin{enumerate}
\item $(P', K') \leq_{P} (P, K)$ if $P' = P$, i.e., if the sections have the
  same top groups;
\item $(P', K') \leq_{K} (P, K)$ if $K' = K$, i.e., if the sections have the
  same bottom groups;
\item $(P', K') \leq_{P/K} (P, K)$ if the map $pK' \mapsto pK$, $p \in P$, is
  an isomorphism.
\end{enumerate}
\end{defn}

We can now reformulate Theorem \ref{secdecom} in terms of these three relations.
\begin{cor}\label{ince}
The partial order $\leq$ on $\Sec{G}$ is a product of three relations, i.e.,
$${\leq} = {\leq}_{K} \circ {\leq}_{P/K} \circ {\leq}_{P}\text.$$
Let $A(\leq)$ denote the incidence matrix for the partial order $\leq$. Then the stronger property
$$A(\leq) = A(\leq_{K}) \cdot A (\leq_{P/K}) \cdot A (\leq_{P})$$
also holds.
\end{cor}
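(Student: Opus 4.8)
The plan is to establish the relation-product identity first and then bootstrap it to the matrix identity, the latter requiring an additional uniqueness argument.

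For ${\leq} = {\leq_K} \circ {\leq_{P/K}} \circ {\leq_P}$, I would argue by two inclusions. The inclusion $\supseteq$ is immediate: each of $\leq_P$, $\leq_{P/K}$, $\leq_K$ is by definition a sub-relation of $\leq$, so any composite chain lands inside $\leq \circ \leq \circ \leq$, which equals $\leq$ by transitivity. For $\subseteq$, I would take an arbitrary $(P', K') \leq (P, K)$ and read off an explicit factorisation through the two intermediary sections furnished by Theorem~\ref{secdecom}:
\[
  (P', K') \leq_P (P', K \cap P') \leq_{P/K} (P'K, K) \leq_K (P, K).
\]
The outer two steps are immediate from the definitions (equal top group $P'$, resp.\ equal bottom group $K$), the required inclusions $K' \leq K \cap P'$ and $P'K \leq P$ following from $(P',K') \leq (P,K)$ together with $K' \unlhd P'$ and $K \unlhd P$; the middle step is exactly the isomorphism asserted in Theorem~\ref{secdecom}(iii).

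For the matrix identity I would expand the $(x,w)$-entry of the product as a sum over the intermediary sections and interpret it as a count,
\[
  \bigl(A(\leq_K)\, A(\leq_{P/K})\, A(\leq_P)\bigr)_{xw} = \#\{(z, y) : w \leq_P z \leq_{P/K} y \leq_K x\},
\]
which I then must match against the single entry $A(\leq)_{xw}$, equal to $1$ if $w \leq x$ and $0$ otherwise. The vanishing case $w \not\leq x$ is disposed of exactly as the inclusion $\supseteq$ above. The content of the identity therefore reduces to showing that, whenever $w = (P', K') \leq (P, K) = x$, the intermediary pair $(z, y)$ is \emph{unique}, so that the count is exactly~$1$.

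This uniqueness is the main obstacle, and it is where the extremal sections of Theorem~\ref{secdecom}(i),(ii) do the work. I would solve the three relational constraints in cascade. Writing $z = (P', K_z)$ (forced by $w \leq_P z$) and $y = (P_y, K_y)$, the condition $y \leq_K x$ pins $K_y = K$; substituting this into $z \leq_{P/K} y$ forces $K_z = K \cap P'$ and $P_y = P'K$, these being precisely the requirements that the canonical map $P'/K_z \to P_y/K$ be injective and surjective. Hence necessarily $z = (P', K \cap P')$ and $y = (P'K, K)$ --- exactly the sections appearing in the first part --- with no other choice available. Combining the vanishing and uniqueness cases gives the entrywise equality, and hence the asserted matrix factorisation.
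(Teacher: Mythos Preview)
Your proof is correct and follows essentially the same approach as the paper: both derive the relation identity and the matrix identity from the existence and uniqueness of the intermediate sections $(P', K \cap P')$ and $(P'K, K)$ supplied by Theorem~\ref{secdecom}. The paper's proof is a single sentence that simply cites Theorem~\ref{secdecom} for this uniqueness, whereas you spell out the uniqueness argument explicitly from the definitions of $\leq_P$, $\leq_{P/K}$, $\leq_K$; this extra detail is sound and makes the argument more self-contained, but the underlying idea is identical.
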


\begin{proof}
By Theorem \ref{secdecom}, for $(P' , K') \leq (P, K)$ there exists unique intermediate sections $S, S' \in \Sec{G}$ such that $(P', K') \leq_{P} S' \leq_{P/K} S \leq_{K} (P, K)$.
\end{proof}

\begin{rmk}\label{rmk:corr-secs-subs}
  Note that, by the Correspondence Theorem, there is a bijective
  correspondence between the subgroups of $P/K$ and the sections $(P', K')$
  of $G$ with $(P', K') \leq_K (P, K)$.  Similarly, there is a bijective
  correspondence between the normal subgroups (and hence the factor groups)
  of $P/K$ and the sections $(P', K')$ of $G$ with $(P, K) \leq_P (P', K')$.
\end{rmk}

\subsection{Class Incidence Matrices.}
\label{sec:class-incidences}

We denote the class incidence matrix
of the $G$-poset $(\Sec{G}, \leq)$ by $\CIM(\leq)$.
Note that the set $\Sec{G}$ of sections of $G$ is
also a $G$-poset with respect to any of the
partial orders from Definition~\ref{def:subpos},
with respective class incidence matrices
$\CIM(\leq_P)$, $\CIM(\leq_K)$ and $\CIM(\leq_{P/K})$.

\begin{thm}\label{cimsecdecom}
With this notation,
\[
  \CIM(\leq) = \CIM(\leq_{K}) \cdot \CIM(\leq_{P/K}) \cdot \CIM(\leq_{P})\text.
\]
\end{thm}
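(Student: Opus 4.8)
The plan is to deduce the matrix factorization from the corresponding factorization of the ordinary incidence matrix in Corollary~\ref{ince}, combined with the three relations of Lemma~\ref{classin}. Writing $R = R(G)$ and $C = C(G)$ for the row summing and column picking matrices of the conjugation action of $G$ on $\Sec{G}$, I would start from $\CIM(\leq) = R \cdot A(\leq) \cdot C$ (Lemma~\ref{classin}(iii)) and substitute $A(\leq) = A(\leq_K) \cdot A(\leq_{P/K}) \cdot A(\leq_P)$ from Corollary~\ref{ince}, giving
\[
  \CIM(\leq) = R \cdot A(\leq_K) \cdot A(\leq_{P/K}) \cdot A(\leq_P) \cdot C.
\]
The rest of the argument is to transport the single factor $R$ rightward through the three incidence matrices.

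Before doing so, I would check that each of $\leq_K$, $\leq_{P/K}$, $\leq_P$ is a $G$-poset, which is what makes $\CIM(\leq_K)$, $\CIM(\leq_{P/K})$, $\CIM(\leq_P)$ well-defined and lets Lemma~\ref{classin}(ii) apply to each. This is straightforward: conjugation by any $g \in G$ is an automorphism, so it preserves componentwise inclusion together with each of the defining conditions of Definition~\ref{def:subpos} --- equality of top groups, equality of bottom groups, and the property that $pK' \mapsto pK$ is an isomorphism. Hence each relation is $G$-invariant, and for $\ast \in \{K,\, P/K,\, P\}$ the intertwining identity $R \cdot A(\leq_\ast) = \CIM(\leq_\ast) \cdot R$ holds by Lemma~\ref{classin}(ii).

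The key step then is to apply these three identities in succession, pushing $R$ past one factor at a time:
\[
  R \cdot A(\leq_K) \cdot A(\leq_{P/K}) \cdot A(\leq_P)
  = \CIM(\leq_K) \cdot \CIM(\leq_{P/K}) \cdot \CIM(\leq_P) \cdot R.
\]
Right-multiplying by $C$ and using $R \cdot C = I$ from Lemma~\ref{classin}(i) collapses the trailing $R \cdot C$ to the identity and yields
\[
  \CIM(\leq) = \CIM(\leq_K) \cdot \CIM(\leq_{P/K}) \cdot \CIM(\leq_P),
\]
as claimed.

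I expect the main obstacle to be resisting the naive substitution $\CIM(\leq_\ast) = R \cdot A(\leq_\ast) \cdot C$ into each factor separately: that would insert spurious products $C \cdot R$ between consecutive matrices, and $C \cdot R$ is \emph{not} the identity --- it is the idempotent projecting each orbit onto its chosen representative. The correct route is to keep one copy of $R$ travelling through the whole product via the intertwining relation of Lemma~\ref{classin}(ii) and to absorb it only at the very end. The one place warranting care is the $G$-poset property of $\leq_{P/K}$, whose definition involves an isomorphism of section quotients rather than a plain inclusion; but since conjugation carries this isomorphism to the conjugated one, compatibility with the $G$-action still holds.
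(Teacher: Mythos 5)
Your proposal is correct and follows essentially the same route as the paper's proof: start from $\CIM(\leq) = R \cdot A(\leq) \cdot C$, substitute the factorization of $A(\leq)$ from Corollary~\ref{ince}, push $R$ rightward through the factors via Lemma~\ref{classin}(ii), and absorb the trailing $R \cdot C$ using Lemma~\ref{classin}(i). Your explicit verification that each of $\leq_K$, $\leq_{P/K}$, $\leq_P$ is a $G$-poset, and your warning against the naive termwise substitution (since $C \cdot R \neq I$), are sensible additions that the paper leaves implicit.
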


\begin{proof}
Set $R = R(G)$ and $C = C(G)$.
From Lemma \ref{classin}(iii) we have that $\CIM(\leq) = R  \cdot A (\leq)  \cdot C$.  By Corollary \ref{ince} $A(\leq) = A (\leq_K)  \cdot A (\leq_{P/K})  \cdot A (\leq_P)$.  Lemma \ref{classin}(ii)  then gives
\begin{align*}
\CIM(\leq) &= R \cdot  A (\leq_K)  \cdot A (\leq_{P/K})  \cdot A (\leq_P)  \cdot C\\
&= \CIM(\leq_K)  \cdot R  \cdot A (\leq_{P/K})  \cdot A (\leq_P)  \cdot C\\
&= \CIM (\leq_K) \cdot \CIM (\leq_{P/K})  \cdot R  \cdot A (\leq_P) \cdot C\\
&= \CIM(\leq_K) \cdot \CIM(\leq_{P/K}) \cdot \CIM (\leq_P).
\end{align*}
\end{proof}

Each of the classes incidence matrices $\CIM(\leq_P)$, $\CIM(\leq_K)$ and
$\CIM(\leq_{P/K})$ is a direct sum of smaller class incidence matrices, as
the following results show.

\begin{thm}\label{secp}
For $P \leq G$, denote the class incidence matrix
of the $N_G(P)$-poset $\Sub{G}^{\unlhd P}$ by $\CIM_P(\leq)$.  Then
\[
\CIM (\leq_P) = \bigoplus_{[P] \in \Sub{G}/G} \CIM_P (\leq)\text.
\]
\end{thm}

\begin{proof}
  Let $(P, K) \in \Sec{G}$.  By Proposition~\ref{la:sec_cong}, the
  $G$-conjugacy classes containing a section with top group $P$ are
  represented by sections $(P, K')$, where $K'$ runs over a transversal of the
  $N_G(P)$-orbits of $\Sub{G}^{\unlhd P}$.  In order to count the
  $G$-conjugates of $(P, K')$ above $(P, K)$ in the $\leq_P$-order, it now
  suffices to note that $(P, K) \leq (P, K')^g$ for some $g \in G$ if and
  only if $K \leq (K')^g$ for some $g \in N_G(P)$.
\end{proof}

\begin{ex}\label{ex:s3-Psec}
Let $G = S_3$. Then
\[
\CIM (\leq_{P}) = \footnotesize \begin{array}{c|c|cc|cc|ccc}
  (1,1)& 1 &\cdot &\cdot &\cdot &\cdot &\cdot&\cdot &\cdot \\
  \hline
   (2, 1)&\cdot & 1 &\cdot &\cdot & \cdot &\cdot &\cdot &\cdot \\
    (2,2)&\cdot & 1 &1 & \cdot& \cdot &\cdot & \cdot&\cdot\\
    \hline
    (3,1)&\cdot & \cdot & \cdot & 1 & \cdot & \cdot& \cdot& \cdot \\
  (3,3)& \cdot & \cdot & \cdot & 1 & 1&\cdot&\cdot&\cdot \\
     \hline
  (G,1)& \cdot & \cdot & \cdot & \cdot & \cdot & 1&\cdot&\cdot  \\
  (G, 3)& \cdot &\cdot & \cdot & \cdot  & \cdot & 1& 1&\cdot \\
   (G,G)&\cdot&\cdot  &\cdot &\cdot & \cdot&1 &  1&1 \\
   \hline
   &(1, 1)&(2, 1)&(2, 2)&(3, 1)&(3, 3)&(G, 1)&(G, 3)&(G,G)
\end{array}
\]
\end{ex}

\begin{thm}\label{seck}
For $K \leq G$, denote the class incidence matrix of the $N_G(K)$-poset $\Sub{G}^{K \unlhd}$  by  $\CIM_K(\leq)$.  Then
\[
\CIM (\leq_{K}) = \bigoplus_{[K] \in \Sub{G}/G} \CIM_K (\leq)\text.
\]
\end{thm}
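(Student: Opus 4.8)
The plan is to prove Theorem~\ref{seck} by direct analogy with the proof of Theorem~\ref{secp}, exploiting the symmetry between the role of the top group $P$ in $\leq_P$ and the bottom group $K$ in $\leq_K$. The two statements are mirror images of one another: Theorem~\ref{secp} decomposes $\CIM(\leq_P)$ as a direct sum indexed by conjugacy classes of top groups, and groups sections by their fixed top group $P$; here I want to decompose $\CIM(\leq_K)$ as a direct sum indexed by conjugacy classes of bottom groups, grouping sections by their fixed bottom group $K$.

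First I would observe that the relation $\leq_K$ only relates sections that share a common bottom group: if $(P', K') \leq_K (P, K)$ then $K' = K$ by Definition~\ref{def:subpos}(ii). Consequently the $\leq_K$-order decomposes as a disjoint union over bottom groups $K$, and it respects the $G$-action, so the associated class incidence matrix is block diagonal with one block for each $G$-conjugacy class of bottom groups. This is exactly the structural content of the direct sum on the right-hand side. The crucial enumerative input is part~(ii) of Proposition~\ref{la:sec_cong}, which tells us that the $G$-conjugacy classes of sections with bottom group $K$ are represented by sections $(P, K)$ where $P$ runs over a transversal of the $N_G(K)$-orbits of the poset $\Sub{G}^{K \unlhd} = \{P \in \Sub{G} : K \unlhd P\}$. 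This is the mirror of the role played by $\Sub{G}^{\unlhd P}$ in Theorem~\ref{secp}.

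The core of the argument is the counting step. Fix a bottom group $K$ and sections $(P, K)$ and $(P', K)$ with the same bottom group. I need to count the $G$-conjugates of $(P', K)$ that lie above $(P, K)$ in the $\leq_K$-order; by definition of the class incidence matrix this is $\mathbf{a}_{xy}$ for the relevant representatives. The key reduction, paralleling the final sentence of the proof of Theorem~\ref{secp}, is that $(P, K) \leq_K (P', K)^g$ for some $g \in G$ if and only if $P \leq (P')^g$ with $K = K^g$ for some $g$, which forces $g \in N_G(K)$, so that the count reduces to a count of $N_G(K)$-conjugates of $P'$ above $P$ within $\Sub{G}^{K \unlhd}$. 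This is precisely the entry of $\CIM_K(\leq)$, the class incidence matrix of the $N_G(K)$-poset $\Sub{G}^{K \unlhd}$.

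The main obstacle, such as it is, lies in carefully justifying that the $g \in G$ witnessing the conjugacy may be taken to normalize $K$: since both sections share the bottom group $K$ and conjugation by $g$ must send $K$ to $K$ to preserve the shared bottom group in the $\leq_K$-relation, we indeed have $g \in N_G(K)$, after which the relation $(P,K) \leq (P',K)^g$ collapses to $P \leq (P')^g$ inside the $N_G(K)$-poset. Once this reduction is in hand, the block-diagonal structure and the identification of each block with $\CIM_K(\leq)$ follow formally, and the proof can be given in a few lines simply by stating ``Follows analogously to Theorem~\ref{secp}, using part~(ii) of Proposition~\ref{la:sec_cong}.''
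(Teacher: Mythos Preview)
Your proposal is correct and follows precisely the approach the paper takes: the paper's proof reads simply ``Similar to the proof of Theorem~\ref{secp},'' and you have faithfully spelled out what that means, invoking Proposition~\ref{la:sec_cong}(ii) in place of~(i) and carrying out the dual counting argument. Your careful justification that any $g \in G$ witnessing $(P,K) \leq_K (P',K)^g$ must lie in $N_G(K)$ is exactly the point that makes the reduction to the $N_G(K)$-poset $\Sub{G}^{K \unlhd}$ go through.
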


\begin{proof}
  Similar to the proof of Theorem~\ref{secp}.
\end{proof}

\begin{ex}\label{ex:s3-Ksec}
Let $G = S_3$.  Then
\[
\CIM (\leq_{K}) =  \footnotesize \begin{array}{c|cccc|c|cc|c}
  (1,1)& 1 &\cdot &\cdot &\cdot &\cdot &\cdot&\cdot &\cdot \\
   (2, 1)&3 & 1 &\cdot &\cdot & \cdot &\cdot &\cdot &\cdot \\
    (3,1)&1 & \cdot & 1 &\cdot &\cdot &\cdot & \cdot&\cdot\\
    (G,1)&1 & 1 & 1 & 1  & \cdot & \cdot& \cdot& \cdot \\
    \hline
  (2,2)& \cdot & \cdot & \cdot & \cdot & 1&\cdot&\cdot&\cdot \\
     \hline
  (3,3)& \cdot & \cdot & \cdot & \cdot & \cdot & 1&\cdot&\cdot  \\
  (G, 3)& \cdot &\cdot & \cdot & \cdot  & \cdot & 1& 1&\cdot \\
  \hline
   (G,G)&\cdot&\cdot  &\cdot &\cdot & \cdot&\cdot &  \cdot&1 \\
   \hline
   &(1, 1)&(2, 1)&(3, 1)&(G, 1)&(2, 2)&(3, 3)&(G, 3)&(G,G)
\end{array}
\]
\end{ex}

\begin{lem}
We have
\[
\CIM(\leq_{P/K}) = \bigoplus_{U \sqsubseteq G} \CIM_U(\leq_{P/K})
\]
where, for $U \sqsubseteq G$, $\CIM_U(\leq_{P/K})$ is the class incidence
matrix of the $G$-poset $(\Sec{G}(U), \leq_{P/K})$.
\end{lem}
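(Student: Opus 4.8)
The plan is to show that the relation $\leq_{P/K}$ never relates two sections of different isomorphism type, so that the poset $(\Sec{G}, \leq_{P/K})$ splits as a disjoint union of the sub-posets $(\Sec{G}(U), \leq_{P/K})$, one for each $U \sqsubseteq G$. The block-diagonal form of the class incidence matrix then follows exactly as in the proofs of Theorems~\ref{secp} and~\ref{seck}.

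First I would record the key observation. By Definition~\ref{def:subpos}(iii), if $(P', K') \leq_{P/K} (P, K)$ then the canonical map between the two section quotients is an isomorphism, so in particular $P'/K' \cong P/K$ and the two sections have the same isomorphism type. Since $G$-conjugation sends $(P, K)$ to $(P^g, K^g)$ with $P^g/K^g \cong P/K$, conjugation also preserves isomorphism type. Consequently each set $\Sec{G}(U)$ of sections of type $U$ is $G$-invariant, and it is closed under $\leq_{P/K}$ in both directions: whenever $(P', K') \leq_{P/K} (P, K)$, both sections lie in the same $\Sec{G}(U)$. Thus $(\Sec{G}(U), \leq_{P/K})$ is a $G$-sub-poset of $(\Sec{G}, \leq_{P/K})$, and its class incidence matrix $\CIM_U(\leq_{P/K})$ is well defined.

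Next I would assemble the pieces. The partition $\Sec{G}/G = \coprod_{U \sqsubseteq G} \Sec{G}(U)/G$ from Section~\ref{sec:conj-class-sect} lets me choose a transversal of $\Sec{G}/G$ that is a disjoint union of transversals of the $\Sec{G}(U)/G$. Labelling the rows and columns of $\CIM(\leq_{P/K})$ by this transversal, the entry indexed by classes of types $U$ and $U'$ counts the conjugates of one representative lying above the other in the $\leq_{P/K}$-order; by the key observation this count is $0$ whenever $U \neq U'$. Hence $\CIM(\leq_{P/K})$ is block diagonal, and the diagonal block indexed by $U$ is precisely the class incidence matrix of $(\Sec{G}(U), \leq_{P/K})$, namely $\CIM_U(\leq_{P/K})$. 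This yields the claimed direct sum decomposition.

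There is no serious obstacle here: the entire content is the observation that $\leq_{P/K}$ preserves isomorphism type, which is immediate from the definition, after which the argument is the same block-diagonal bookkeeping used for $\CIM(\leq_P)$ and $\CIM(\leq_K)$. The only point requiring a little care is to fix a transversal of $\Sec{G}/G$ compatible with the partition into the $\Sec{G}(U)$, so that the blocks of $\CIM(\leq_{P/K})$ match the individual matrices $\CIM_U(\leq_{P/K})$ on the nose.
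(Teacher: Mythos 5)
Your proposal is correct and follows exactly the paper's argument: the paper's entire proof is the single observation that $(P',K') \leq_{P/K} (P,K)$ implies $P'/K' \cong P/K$, which is your key observation, and the rest of your write-up simply makes explicit the transversal bookkeeping that the paper leaves implicit.
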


\begin{proof}
  $(P',K') \leq_{P/K} (P, K)$ implies $P'/K' \cong P/K$.
\end{proof}

\begin{ex}\label{ex:s3-PKsec}
Let $G = S_3$.  Then
\[
\CIM(\leq_{P/K}) = \footnotesize\begin{array}{c|cccc|cc|c|c}
    (1,1)&1 &\cdot &\cdot &\cdot &\cdot &\cdot&\cdot &\cdot \\
   (2,2)& 3 & 1 &\cdot &\cdot &\cdot &\cdot &\cdot &\cdot \\
    (3,3)& 1 & \cdot & 1 &\cdot &\cdot &\cdot &\cdot&\cdot\\
    (G,G)& 1 & 1 & 1 & 1 &\cdot & \cdot& \cdot&\cdot \\
    \hline
   (2,1)& \cdot& \cdot & \cdot & \cdot & 1&\cdot&\cdot&\cdot \\
   (G,3)& \cdot & \cdot & \cdot & \cdot& 1 & 1&\cdot&\cdot  \\
   \hline
  (3,1)&  \cdot &\cdot & \cdot & \cdot & \cdot & \cdot &1&\cdot \\
  \hline
   (G,1)& \cdot &\cdot &\cdot &\cdot & \cdot&\cdot & \cdot& 1\\
   \hline
   &(1, 1)&(2, 2)&(3, 3)&(G,G)&(2, 1)&(G, 3)&(3, 1)&(G,1)
\end{array}
\]
The class incidence matrix $\CIM(\leq)$ of the $G$-poset
$(\Sec{G}, \leq)$ is the product of this matrix and the class incidence matrices
in Examples~\ref{ex:s3-Ksec} and \ref{ex:s3-Psec}, according to  Theorem~\ref{cimsecdecom}:
\[
\CIM(\leq) = \footnotesize\begin{array}{c|cccc|cc|c|c}
  (1,1)& 1 &\cdot &\cdot &\cdot &\cdot &\cdot&\cdot &\cdot \\
   (2,2)&3 & 1 &\cdot &\cdot & 1 &\cdot &\cdot &\cdot \\
    (3,3)&1 & \cdot & 1 &\cdot &\cdot &\cdot & 1&\cdot\\
    (G,G)&1 & 1 & 1 & 1  & 1 & 1& 1& 1 \\
    \hline
  (2,1)& 3& \cdot & \cdot & \cdot & 1&\cdot&\cdot&\cdot \\
  (G,3)& 1 & \cdot & 1 & \cdot & 1 & 1&1&1  \\
  \hline
  (3, 1)& 1 &\cdot & \cdot & \cdot  & \cdot & \cdot& 1&\cdot \\
  \hline
   (G,1)&1&\cdot  &\cdot &\cdot & 1&\cdot &  1&1 \\
   \hline
   &(1, 1)&(2, 2)&(3, 3)&(G,G)&(2, 1)&(G, 3)&(3, 1)&(G,1)
\end{array}
\]
\end{ex}

\subsection{The Sections Lattice Revisited.}
\label{sec:sect-latt-revis}

The partial order ${\leq} = {\leq_K} \circ {\leq_{P/K}} \circ {\leq_P}$ on
$\Sec{G}$ is not compatible with section size as
$(P',K') \leq_P (P, K)$ implies $|P'/K'| \geq |P/K|$.  It turns out
that, by effectively replacing the partial order $\leq_P$ by its opposite $\geq_P$, one
obtains from $\leq$ a new partial order $\leq'$, which is compatible with
section size.

\begin{prop} \label{altsubsec}
Define a relation $\leq'$ on $\Sec{G}$ by
\begin{align*}
(P',K') \leq' (P,K) \text{ if } P' \leq P \text{ and } K \cap P' \leq K'
\end{align*}
for sections $(P',K')$ and $(P,K)$ of $G$.
Then $(\Sec{G}, \leq')$ is a $G$-poset.
\end{prop}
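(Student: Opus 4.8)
The plan is to verify the two defining axioms of a $G$-poset directly: first that $\leq'$ is a genuine partial order (reflexive, antisymmetric, transitive), and then that it is compatible with the conjugation action of $G$. Reflexivity is immediate, since for any section $(P,K)$ we have $P \leq P$ and $K \cap P = K \leq K$. For the $G$-action compatibility, observe that conjugation by $g \in G$ respects intersection, i.e. $(K \cap P')^g = K^g \cap (P')^g$, so the two conditions $P' \leq P$ and $K \cap P' \leq K'$ are each preserved under simultaneous conjugation of $(P',K')$ and $(P,K)$ by $g$. This gives the implication $(P',K') \leq' (P,K) \implies (P',K')^g \leq' (P,K)^g$ essentially for free.

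The substantive work is antisymmetry and transitivity. For antisymmetry, suppose $(P',K') \leq' (P,K)$ and $(P,K) \leq' (P',K')$. The top-group conditions give $P' \leq P$ and $P \leq P'$, hence $P = P'$. With $P = P'$ in hand, the bottom-group conditions read $K \cap P \leq K'$ and $K' \cap P \leq K$; but since $K \leq P$ and $K' \leq P'= P$, these simplify to $K \leq K'$ and $K' \leq K$, forcing $K = K'$. So $(P',K') = (P,K)$.

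Transitivity is where I expect the main obstacle. Suppose $(P'',K'') \leq' (P',K')$ and $(P',K') \leq' (P,K)$. The top-group conditions chain transitively to give $P'' \leq P$ without difficulty. The delicate part is the bottom-group condition: I must deduce $K \cap P'' \leq K''$ from the two hypotheses $K' \cap P'' \leq K''$ and $K \cap P' \leq K'$, using also $P'' \leq P' \leq P$. The natural route is to intersect the second hypothesis with $P''$: since $P'' \leq P'$, we have $K \cap P'' = K \cap P' \cap P'' \leq K' \cap P''$, and then the first hypothesis gives $K' \cap P'' \leq K''$, so chaining yields $K \cap P'' \leq K''$ as required. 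I would double-check that each intersection manipulation is valid at the level of subgroups (all the groups involved are subgroups of $G$ and intersections of subgroups are subgroups, so set-theoretic containment and subgroup containment coincide here).

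Having established that $\leq'$ is a partial order compatible with conjugation, the $G$-poset property follows by definition. I would keep the write-up brief, since the only genuinely non-obvious step is the chain of intersection inequalities in transitivity; the remaining verifications are routine once one notes that conjugation and intersection commute and that $K \cap P'$ collapses to $K$ whenever $K \leq P'$.
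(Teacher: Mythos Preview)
Your proposal is correct and follows essentially the same approach as the paper: reflexivity, antisymmetry, and $G$-compatibility are declared routine, and the only substantive step is transitivity, handled exactly as you do by intersecting $K \cap P' \leq K'$ with $P''$ (using $P'' \leq P'$) to obtain $K \cap P'' \leq K' \cap P'' \leq K''$. The paper's write-up is simply terser, omitting the explicit antisymmetry argument you spell out.
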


\begin{proof}
  The relation $\leq'$ is clearly reflexive and antisymmetric on $\Sec{G}$,
  and compatible with the action of $G$.  Hence it only remains to be shown
  that this relation is transitive.

Let $(P'', K'')$, $(P', K')$ and $(P, K)$ be sections of $G$, such that $(P'', K'') \leq' (P', K')$ and $(P', K') \leq' (P, K)$. In order to show that $(P'', K'') \leq' (P,K)$, we need $P'' \leq P$ (which is clear), and $K \cap P'' \leq K''$. Intersecting both sides of $K \cap P' \leq K'$ with $P''$ gives  $K \cap P''  \leq  K' \cap P''  \leq  K''$, as desired.
\end{proof}

\begin{ex}\label{ex:v4}
  Let us denote the three subgroups of order $2$ of the Klein $4$-group
  $G = 2^2$ by $2_1$, $2_2$ and $2_3$.  Then
  $(2_1,1) \leq' (G, 2_2), (G, 2_3)$ and $(G, G) \leq' (G, 2_2), (G, 2_3)$.
  As the sections $(G, 2_2), (G, 2_3)$ have no unique infimum the poset
  $(\Sec{G}, \leq')$ is not a lattice.
\end{ex}

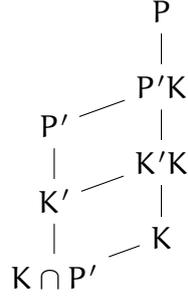
\begin{figure}[ht]
\begin{center}
\begin{tikzpicture}
\tikzstyle{every node} = [fill = white]
\draw (0,0) node (1) {$P$};
\draw (1) --++ (270:1cm) node (2) {$P'K$};
\draw (2) --++ (270:1cm) node (7) {$K'K$};
\draw (7) --++ (270:1cm) node (3) {$K$};
\draw (2) --++ (200:1.5cm) node (4) {$P'$};
\draw (4) --++ (270:1cm) node (5) {$K'$};
\draw (5) --++ (270:1cm) node (6) {$K \cap P'$};
\draw (3) --++ (6);
\draw (7) --++ (5);
\end{tikzpicture}
\end{center}
\caption{$(P',K') \leq' (P,K)$}\label{fig:sec-leq1}
\end{figure}

\begin{prop} \label{altsecdecom}
Let $(P', K')$ and $(P , K)$ be sections of a finite group $G$, such that $(P' , K'
) \leq' (P, K)$. Then, there are uniquely determined sections of $G$, $(P , K) \geq'
(P_1, K_1) \geq' (P_2, K_2) \geq' (P', K')$ such that
\begin{enumerate}
\item $(P_1, K_1) \leq_K (P, K)$,
\item $(P', K') \geq_P (P_2, K_2)$,
\item $(P_2,K_2) \leq_{P/K} (P_1,K_1)$.
\end{enumerate}
\end{prop}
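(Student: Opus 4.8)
The plan is to write down the two intermediary sections explicitly, check that they satisfy conditions (i)--(iii) together with the $\leq'$-chain, and then argue that these conditions pin them down uniquely. Guided by the decomposition of the ordinary order $\leq$ in Theorem~\ref{secdecom} and Corollary~\ref{ince}, I would set $(P_1, K_1) := (P'K, K)$ and $(P_2, K_2) := (P', K \cap P')$; these are precisely the two intermediary sections occurring in the $\leq$-decomposition of $(P',K') \leq (P,K)$, here reassembled into a $\leq'$-chain with the role of the $\leq_P$-step reversed. Since $K \unlhd P$ and $P' \leq P$, both $(P'K, K)$ and $(P', K\cap P')$ are genuine sections of $G$.

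First I would verify conditions (i)--(iii). Condition (i), $(P'K, K) \leq_K (P, K)$, holds because the bottom groups agree and $P'K \leq P$. Condition (ii), $(P', K\cap P') \leq_P (P', K')$, holds because the top groups agree and $K \cap P' \leq K'$; this inclusion is exactly the hypothesis $(P',K') \leq' (P,K)$. Condition (iii), $(P', K\cap P') \leq_{P/K} (P'K, K)$, is immediate from Theorem~\ref{secdecom}(iii), which asserts that the canonical map $p(K\cap P') \mapsto pK$, $p \in P'$, is an isomorphism of section quotients (the underlying componentwise inclusion $P' \leq P'K$, $K\cap P' \leq K$ being obvious).

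Next I would check the chain $(P, K) \geq' (P_1, K_1) \geq' (P_2, K_2) \geq' (P', K')$. Each step unwinds to an inclusion of the shape ``(larger bottom) $\cap$ (smaller top) $\subseteq$ (smaller bottom)''. The top step reduces to $K \cap P'K = K$ (as $K \leq P'K$), the middle step to the equality $K \cap P' = K\cap P'$, and the bottom step $(P',K') \leq' (P_2,K_2)$ to $(K\cap P') \cap P' = K \cap P' \leq K'$, which is once more the standing hypothesis.

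Finally, for uniqueness I would reverse the implications: condition (i) forces $K_1 = K$ and condition (ii) forces $P_2 = P'$, whereupon condition (iii) forces $K_2 = K_1 \cap P_2 = K \cap P'$ and $P_1 = P_2 K_1 = P'K$. The one point that needs care throughout is the algebraic reading of the relation $\leq_{P/K}$: that the canonical homomorphism $P_2/K_2 \to P_1/K_1$ is an isomorphism if and only if $K_2 = K_1 \cap P_2$ and $P_1 = P_2 K_1$. This equivalence is exactly what the surjective--bijective--injective factorization $\phi = \phi_1\phi_2\phi_3$ of Theorem~\ref{secdecom} supplies, and once it is in hand both existence and uniqueness are routine lattice bookkeeping.
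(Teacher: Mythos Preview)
Your proof is correct and follows the same approach as the paper: both take $(P_1,K_1)=(P'K,K)$ and $(P_2,K_2)=(P',K\cap P')$, invoking the second isomorphism theorem (equivalently Theorem~\ref{secdecom}(iii)) for the $\leq_{P/K}$ step. Your version is in fact more complete than the paper's, which merely names the two sections and asserts they work; you additionally verify the $\leq'$-chain step by step and supply the uniqueness argument that the paper's proof omits despite the statement claiming it.
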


\begin{proof}
By definition, $(P',K') \leq' (P,K)$ implies $P' \leq P$ and
$K \cap P' \leq K'$, where
$K' \unlhd P'$ and $K \unlhd P$. Then, by the second isomorphism theorem,
$K \cap P'$ is a normal subgroup of $P'$, $P' K$ is a subgroup of $P$ such that $K \trianglelefteq P' K$ and
$(P'K)/K$ is isomorphic to $P' / (K \cap P')$. Hence $(P_1,K_1) = (P'K,K)$ and $(P_2,K_2) = (P', K \cap P')$
have the desired properties, see Fig.~\ref{fig:sec-leq1}.
\end{proof}

\begin{cor}
The partial order $\leq'$ on $\Sec{G}$ is a product of three relations, i.e.,
\begin{align*}
{\leq'} = {\leq_{K}} \circ {\leq_{P/K}} \circ {\geq_{P}}\text.
\end{align*}
Moreover, $A(\leq') = A(\leq_K) \cdot A(\leq_{P/K}) \cdot A(\geq_P)$
and $\CIM(\leq') = \CIM(\leq_K) \cdot \CIM(\leq_{P/K}) \cdot \CIM(\geq_P)$.
\end{cor}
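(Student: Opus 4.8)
The plan is to mirror the structure of the proof of Corollary~\ref{ince}, which established the analogous statement for $\leq$. The final corollary asserts three facts about $\leq'$: the relation-product decomposition ${\leq'} = {\leq_K} \circ {\leq_{P/K}} \circ {\geq_P}$, the corresponding incidence-matrix factorization $A(\leq') = A(\leq_K) \cdot A(\leq_{P/K}) \cdot A(\geq_P)$, and the class-incidence-matrix factorization. I would establish these in that order, since each later claim leverages the earlier one together with the general machinery already available.

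First I would prove the relation-product decomposition directly from Proposition~\ref{altsecdecom}. That proposition provides, for any pair $(P',K') \leq' (P,K)$, uniquely determined intermediate sections $(P_1,K_1) = (P'K, K)$ and $(P_2, K_2) = (P', K \cap P')$ satisfying $(P_2,K_2) \leq_{P/K} (P_1,K_1)$, $(P_1,K_1) \leq_K (P,K)$, and $(P',K') \geq_P (P_2,K_2)$. Reading these relations in the direction required for a relation product from $(P',K')$ up to $(P,K)$, one has $(P',K') \geq_P (P_2,K_2) \leq_{P/K} (P_1,K_1) \leq_K (P,K)$, which is exactly a chain witnessing $(P,K) \in ({\leq_K} \circ {\leq_{P/K}} \circ {\geq_P})$ applied to $(P',K')$. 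Conversely, any such chain yields $(P',K') \leq' (P,K)$ by transitivity of $\leq'$, established in Proposition~\ref{altsubsec}, together with the observation that each of $\geq_P$, $\leq_{P/K}$, $\leq_K$ is a subrelation of $\leq'$. This gives set-theoretic equality of the two relations.

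For the incidence-matrix identity, the stronger claim is that the product $A(\leq_K) \cdot A(\leq_{P/K}) \cdot A(\geq_P)$ has entries in $\{0,1\}$ agreeing with $A(\leq')$, rather than merely nonzero where $A(\leq')$ is nonzero. The key point is the \emph{uniqueness} of the intermediate sections $(P_1,K_1)$ and $(P_2,K_2)$ in Proposition~\ref{altsecdecom}: each entry of the triple matrix product counts the chains $x \geq_P s_2 \leq_{P/K} s_1 \leq_K y$, and uniqueness forces this count to be exactly $1$ whenever $x \leq' y$ and $0$ otherwise. This is the same mechanism underpinning Corollary~\ref{ince}, so I would invoke it in the identical way, flagging that the only substantive input is the uniqueness clause of Proposition~\ref{altsecdecom}.

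Finally, the class-incidence-matrix factorization follows by the argument of Theorem~\ref{cimsecdecom} verbatim, with $\leq'$ in place of $\leq$ and $\geq_P$ in place of $\leq_P$. Writing $R = R(G)$ and $C = C(G)$, Lemma~\ref{classin}(iii) gives $\CIM(\leq') = R \cdot A(\leq') \cdot C$; substituting the factorization of $A(\leq')$ and repeatedly applying Lemma~\ref{classin}(ii) to commute $R$ past each incidence matrix, telescoping $R \cdot C = I$ at the end, yields $\CIM(\leq_K) \cdot \CIM(\leq_{P/K}) \cdot \CIM(\geq_P)$. The one prerequisite is that $(\Sec{G}, \geq_P)$ is a $G$-poset so that $\CIM(\geq_P)$ is defined; this is immediate since $\leq_P$ is $G$-compatible and compatibility is preserved under taking the opposite relation. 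I expect the main obstacle to be purely bookkeeping: ensuring the directions of $\geq_P$ versus $\leq_P$ are tracked consistently through the relation product and the matrix commutation steps, since a reversed inequality is the sole difference from the already-proved $\leq$ case.
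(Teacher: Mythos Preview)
Your proposal is correct and follows exactly the approach the paper intends: the corollary is stated without proof, implicitly relying on Proposition~\ref{altsecdecom} (for existence and uniqueness of the intermediate sections) in the same way Corollary~\ref{ince} relies on Theorem~\ref{secdecom}, with the class-incidence-matrix part following by the argument of Theorem~\ref{cimsecdecom} verbatim. Your added care in verifying that $\geq_P$, $\leq_{P/K}$, $\leq_K$ are each subrelations of $\leq'$ (for the converse inclusion) and that $(\Sec{G}, \geq_P)$ is a $G$-poset is appropriate and fills in details the paper leaves implicit.
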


\begin{ex} Let $G = S_3$.  Then
\[
\CIM(\leq') =
 \footnotesize
 \begin{array}{c|cccc|cc|c|c}
  (1,1)& 1 &\cdot &\cdot &\cdot &\cdot &\cdot&\cdot &\cdot \\
   (2,2)&3 & 1 &\cdot &\cdot & \cdot &\cdot &\cdot &\cdot \\
    (3,3)&1 & \cdot & 1 &\cdot &\cdot &\cdot & \cdot&\cdot\\
    (G,G)&1 & 1 & 1 & 1  & \cdot & \cdot& \cdot & \cdot \\
    \hline
  (2,1)& 3& 1 & \cdot & \cdot & 1&\cdot&\cdot&\cdot \\
  (G,3)& 1 & 1 & 1 & 1 & 1 & 1&\cdot&\cdot  \\
  \hline
  (3, 1)& 1 &\cdot & 1 & \cdot  & \cdot & \cdot& 1&\cdot \\
  \hline
   (G,1)&1& 1 & 1 & 1 & 1& 1 &  1&1 \\
   \hline
   &(1, 1)&(2, 2)&(3, 3)&(G,G)&(2, 1)&(G, 3)&(3, 1)&(G,1)
\end{array}
\]
\end{ex}

In contrast to the class incidence matrix $\CIM(\leq)$ in
Example~\ref{ex:s3-PKsec}, the matrix $\CIM(\leq')$ is lower triangular
when rows and columns are sorted by section size.
Moreover, $(P, K) \leq' (G, 1)$, for all sections $(P, K)$ of $G$.

\begin{rmk}
  Whenever $(P', K') \leq' (P, K)$, there is a \emph{canonical isomorphism}
  $\psi \colon P'/K' \to P'K/K'K$.  Let $\theta: P_1/K_1 \to P_2/K_2$ be the
  Goursat isomorphism of a subgroup of $G_1 \times G_2$ and suppose that
  $(P', K') \leq' (P_1, K_1)$.  The canonical isomorphism determines a unique
  \emph{restriction} of $\theta$ to a Goursat isomorphism
  $\theta_1' \colon P'/K' \to P_2'/K_2'$.  Similarly, for each section
  $(P', K') \leq' (P_2, K_2)$, there is a unique \emph{co-restriction} of
  $\theta$ to a Goursat isomorphism $\theta_2' \colon P_1'/K_1' \to P'/K'$.
  As the Butterfly meet $(P', K')$ of sections $(P_1, K_1)$ and $(P_2, K_2)$
  of a group $G$ satisfies $(P', K') \leq' (P_1, K_i)$, $i = 1,2$, by
  Lemma~\ref{lm:star_butter}, the product of subgroups with Goursat
  isomorphisms $\theta \colon P_0/K_0 \to P_1/K_1$ and
  $\psi \colon P_2/K_2 \to P_3/K_3$ is the composition of the restriction of
  $\theta$ and the co-restriction of $\psi$ to the Butterfly meet of
  $(P_1, K_1)$ and $(P_2, K_2)$.
\end{rmk}


\section{Morphisms}
\label{sec:morphisms}

Let $U$ be a finite group.  A \emph{$U$-morphism}
of $G$ is an isomorphism $\theta \colon P/K \to U$ between a section $(P, K)$
of $G$ and the group $U$.  The set
\[
\Mor{G}(U):= \{ \theta \colon P/K \to U \mid (P, K) \in \Sec{G}(U) \}
\]
of all $U$-morphisms of $G$ forms a $(G,\Aut(U))$-biset.  In
Section~\ref{sec:biset-u-morphisms}, we describe the set $\Mor{G}(U)/G$ of
$G$-classes of $U$-morphisms as an $\Out(U)$-set.  The identification of
$\Mor{G}(U)$ with certain subgroups of $G \times U$
in Section~\ref{sec:comparing-morphisms}
induces a partial order
on $\Mor{G}(U)$.  In Section~\ref{sec:partial-order-morphism-classes}, we compute
the class incidence matrix of this partial order.

\subsection{Classes of $U$-Morphisms.}
\label{sec:biset-u-morphisms}

Each $U$-morphism  $\theta \colon P/K \to U$ of $G$ induces an
isomorphism between the automorphism groups $\Aut(P/K)$ and $\Aut(U)$.  We
define the automizer of $\theta$ as an isomorphism between the quotient of
the automizer $\Atm{G}(P, K)$ of the section $(P, K)$ and the corresponding
subgroup of $\Aut(U)$.

\begin{defn}
  Given a $U$-morphism $\theta\colon (P, K) \to U$, denote
  $\tilde{P} = N_G(P,K)$ and $\tilde{K} = C_G(P,K)$ and let
  \begin{align*}
    A_{\theta}\leq \Aut(U)
  \end{align*}
  be the image of $\Aut_G(P/K)$ in $\Aut(U)$.  The \emph{automizer} of the
  $U$-morphism $\theta$ is the $A_{\theta}$-morphism
  \[
  \Atm{G}(\theta) \colon \tilde{P}/\tilde{K} \to A_{\theta}\text,
  \]
  that, for $n \in \tilde{P}$, maps the coset $n\tilde{K}$ to the
  automorphism $\theta^{-1} \gamma_n \theta$ of $U$ corresponding to conjugation by $n$ on $P/K$.  Moreover, denote by
\begin{align*}
  O_{\theta} := A_{\theta}/\Inn(U) \leq \Out(U)\text,
\end{align*}
  the group of \emph{outer} automorphisms of $U$ induced via $\theta$,
noting that $\Inn(U) \leq A_{\theta}$.
\end{defn}

The group $G$ acts on $\Mor{G}(U)$ via $\theta^a = \gamma_a^{-1} \theta$,
where $\gamma_a \colon P/K \to P^a/K^a$ is the conjugation map induced by
$a \in G$. We denote by
\begin{align*}
  [\theta]_G := \{\theta^a : a \in G\}
\end{align*}
the $G$-orbit of the $U$-morphism $\theta$ and by
\begin{align*}
\Mor{G}(U)/G := \{[\theta]_G : \theta \in \Mor{G}(U)\}
\end{align*}
the set of
$G$-classes of $U$-morphisms.

For a section
$(P, K) \in \Sec{G}(U)$, denote by
$\Mor{G}^{P,K}(U)$ the set of $U$-morphisms with domain $P/K$.
Under the action
$(\theta, \alpha) \mapsto \theta \alpha$, for $\theta \in \Mor{G}(U)$
and $\alpha \in \Aut(U)$, the set $\Mor{G}(U)$ decomposes
into regular $\Aut(U)$-orbits $\Mor{G}^{P,K}(U)$,
one for each section $(P, K) \in \Sec{G}(U)$.
As the action of $\Aut(U)$
commutes with that of $G$, it induces an $\Aut(U)$-action
$([\theta]_G, \alpha) \mapsto [\theta \alpha]_G$ on the set $\Mor{G}(U)/G$ of
$G$-classes.  This action can be used to classify the
$G$-classes of $U$-morphisms as follows.

\begin{prop}\label{actonmorph}
Let $U \sqsubseteq G$.
\begin{enumerate}
\item[(i)]
As $\Aut(U)$-set,
$\Mor{G}(U)/G$ is the disjoint union of transitive $\Aut(U)$-sets
\begin{align*}
  \Mor{G}^{P,K}(U)/G := \{[\theta]_G : \theta \in \Mor{G}^{P,K}(U)\}\text,
\end{align*}
one for each $G$-class of sections $[P, K]_G \in \Sec{G}(U)/G$.
\item[(ii)] Let $\theta \colon P/K \to U$ be a $U$-morphism
of $G$.  Then
\begin{align*}
 \Mor{G}^{P,K}(U)/G = \{ [\theta \alpha]_G : \alpha \in D_{\theta} \}\text,
\end{align*}
where $D_{\theta}$ is a transversal of the right cosets
$A_{\theta} \alpha$ of $A_{\theta}$ in $\Aut(U)$.
\end{enumerate}
\end{prop}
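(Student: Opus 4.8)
The plan is to analyze the $\Aut(U)$-action on $\Mor{G}(U)/G$ directly, using the regular orbit decomposition of $\Mor{G}(U)$ under $\Aut(U)$ already established in the text. For part (i), I would first observe that the sets $\Mor{G}^{P,K}(U)$ partition $\Mor{G}(U)$ indexed by the sections $(P,K) \in \Sec{G}(U)$, and that the $G$-action and the $\Aut(U)$-action commute. Since the $G$-action sends $\theta \colon P/K \to U$ to $\gamma_a^{-1}\theta \colon P^a/K^a \to U$, it carries $\Mor{G}^{P,K}(U)$ to $\Mor{G}^{P^a,K^a}(U)$; thus $G$ permutes the blocks $\Mor{G}^{P,K}(U)$ exactly as it permutes the sections $(P,K)$ by conjugation. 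Passing to $G$-orbits, the blocks of $\Mor{G}(U)/G$ are therefore indexed by the $G$-classes $[P,K]_G$, which gives the disjoint union $\Mor{G}(U)/G = \coprod \Mor{G}^{P,K}(U)/G$. The remaining point for (i) is that each piece $\Mor{G}^{P,K}(U)/G$ is a transitive $\Aut(U)$-set: this follows because $\Mor{G}^{P,K}(U)$ is a single regular $\Aut(U)$-orbit, and the image of a transitive set under the quotient map remains transitive for the induced action.

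For part (ii), the task is to identify the stabilizer of the point $[\theta]_G$ under the induced $\Aut(U)$-action on $\Mor{G}^{P,K}(U)/G$, and then parametrize the orbit by coset representatives. I would compute that $\alpha \in \Aut(U)$ fixes $[\theta]_G$, i.e. $[\theta\alpha]_G = [\theta]_G$, precisely when $\theta\alpha = \theta^a = \gamma_a^{-1}\theta$ for some $a \in G$. Rearranging, this says $\alpha = \theta^{-1}\gamma_a\theta$ for some $a \in N_G(P,K)$ (the conjugations fixing the section), so $\alpha$ lies in the image of $\Aut_G(P/K)$ in $\Aut(U)$, which is exactly $A_{\theta}$ by definition of the automizer. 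Hence the stabilizer of $[\theta]_G$ is $A_{\theta}$, and the orbit $\Mor{G}^{P,K}(U)/G$ is in bijection with the right cosets $A_{\theta}\backslash \Aut(U)$; choosing a transversal $D_{\theta}$ of these cosets yields $\Mor{G}^{P,K}(U)/G = \{[\theta\alpha]_G : \alpha \in D_{\theta}\}$, as claimed.

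The step I expect to require the most care is the stabilizer computation in part (ii), specifically showing the equivalence $[\theta\alpha]_G = [\theta]_G \iff \alpha \in A_{\theta}$ with the correct handedness of the coset action. The subtlety is that $\theta\alpha$ and $\theta^a = \gamma_a^{-1}\theta$ must have the \emph{same domain} for them to be $G$-equal, which forces $(P^a, K^a) = (P, K)$, i.e. $a \in N_G(P,K)$; only then does the conjugation $\gamma_a$ descend to an automorphism of $P/K$ and transport via $\theta$ to an element of $A_{\theta}$. One must verify carefully that the automizer $A_{\theta}$ as defined—the image of $\Aut_G(P,K) = \Aut_G(P/K)$ under the isomorphism $\Aut(P/K) \to \Aut(U)$ given by $\gamma \mapsto \theta^{-1}\gamma\theta$—coincides exactly with this stabilizer, and that the right-coset structure of $\Aut(U)$ modulo $A_{\theta}$ matches the right $\Aut(U)$-action $([\theta]_G,\alpha)\mapsto[\theta\alpha]_G$ used throughout. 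Once this identification is pinned down, the orbit-stabilizer bijection and the choice of transversal $D_{\theta}$ are routine.
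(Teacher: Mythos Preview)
Your proposal is correct and follows essentially the same approach as the paper. The only cosmetic difference is that the paper packages the disjoint-union decomposition in part~(i) through Lemma~\ref{la:orbits-of-pairs} (identifying $\Mor{G}(U)$ with a $G$-invariant set of pairs $(\theta,(P,K))$), whereas you argue the block decomposition directly; for part~(ii) both arguments amount to the same stabilizer computation showing that $[\theta\alpha]_G = [\theta]_G$ iff $\alpha \in A_{\theta}$.
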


Note that, by an abuse of notation, $\Mor{G}^{P,K}(U)/G$ is the set of full $G$-orbits of the $U$-morphisms in $\Mor{G}^{P,K}(U)$, although $\Mor{G}^{P,K}(U)$ is not a $G$-set in general.

\begin{proof}
  Let $X = \Mor{G}(U)$ and let $Y = \Sec{G}(U)$.  Then $X$ can be identified
  with the $G$-invariant subset $Z$ of $X \times Y$ consisting of those pairs
  $(\theta, (P, K))$ where $\theta$ has domain $P/K$. By
  Lemma~\ref{la:orbits-of-pairs}, $Z/G$ is the disjoint union of
  $\Aut(U)$-orbits $\Mor{G}^{P,K}(U)/G$, one for each $G$-class $[P,K]_G$ of
  sections of $G$.

  Now let $\theta \colon P/K \to U$ be a $U$-morphism.  The stabilizer of the
  section $(P,K)$ in $G$ is its normalizer $N_G(P, K)$.  The automizer
  $A_G(\theta)$ transforms this action into the subgroup $A_{\theta}$ of
  $\Aut(U)$.  As $\Aut(U)$ acts regularly on $\Mor{G}^{P,K}(U)$, the
  $A_{\theta}$-orbits on this set correspond to the cosets of $A_{\theta}$ in
  $\Aut(U)$ and
  $\{[\theta]_G : \theta \in \Mor{G}^{P,K}(U)\} = \{[\theta \alpha]_G :
  \alpha \in D_{\theta}\}$.
\end{proof}

As $\Inn(U) \leq A_{\theta}$ for all $\theta \in \Mor{G}(U)$, the
$\Aut(U)$-action on $\Mor{G}(U)/G$ can be regarded as an $\Out(U)$-action.
Thus, for each section $(P,K)$ of $G$, the set
$\Mor{G}^{P,K}(U)/G$ is isomorphic to $\Out(U)/O_{\theta}$ as $\Out(U)$-set.

\begin{ex}
  Let $G = A_4$ and $U = 2^2$. Then $\Sec{G}(U) = \{( 2^2, 1)\}$, and
  $\Aut(U) \cong S_3$ makes two orbits on the $U$-morphisms of the form
  $\theta: 2^2/ 1 \rightarrow U$, as $A_{\theta} \cong 3$.
\end{ex}

\subsection{Comparing Morphisms}
\label{sec:comparing-morphisms}

By Goursat's Lemma (Lemma~\ref{goursat}),
 a $U$-morphism $\theta \colon P/K \to U$ corresponds to
the subgroup
\begin{align*}
 L = \{(p, (pK)^\theta) : p \in P\} \leq G \times U\text.
\end{align*}
We call $L$ the \emph{graph} of $\theta$.
The partial order on the subgroups of $G \times U$ induces a natural partial
order on $\Mor{G}(U)$, as follows: if $\theta$ and
$\theta'$ are $U$-morphisms with graphs
$L$ and
$L'$ then we define
\begin{align*}
\theta' \leq \theta : \iff L' \leq L\text.
\end{align*}
This partial order on $\Mor{G}(U)$ is closely related to the order $\leq_{P/K}$
on $\Sec{G}(U)$.

\begin{prop}\label{prop:comparing-morphisms}
  Let $\theta \colon P/K \to U$ and $\theta' \colon P'/K' \to U$ be
  $U$-morphisms of $G$.  Then
  \begin{center}
    $\theta' \leq \theta \iff (P',K') \leq_{P/K} (P,K)$ and
    $\theta' = \phi \theta$,
  \end{center}
  where $\phi \colon P'/K' \to P/K$ is the homomorphism defined by
  $(pK')^{\phi} = pK$ for $p \in P'$.
\end{prop}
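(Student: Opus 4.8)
The plan is to prove the biconditional by translating the subgroup inclusion $L' \leq L$ into a condition on the defining sections and Goursat isomorphisms. First I would write out the graphs explicitly: $L = \{(p, (pK)^\theta) : p \in P\}$ and $L' = \{(p, (pK')^{\theta'}) : p \in P'\}$, both as subgroups of $G \times U$. The key observation is that under Goursat's correspondence (Lemma~\ref{goursat}), the projection of $L$ onto $G$ is $P$ and the projection onto $U$ is all of $U$ (since $\theta$ is an isomorphism onto $U$), and similarly for $L'$; moreover the ``kernel'' of the relation $L$ on the $G$-side is $K$, and on the $U$-side it is $1$. So both $L$ and $L'$ have full projection $U$ and trivial $U$-side kernel.

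For the forward direction, I would assume $L' \leq L$. Comparing $G$-projections immediately gives $P' \leq P$. Comparing the $G$-side kernels $K' = \{p \in G : (p,1) \in L'\}$ and $K = \{p \in G : (p,1) \in L\}$, inclusion $L' \leq L$ forces $K' \leq K$, hence $K' = K \cap P'$ because any $p \in P'$ with $pK \in K/K$ must already lie in $K'$ by difunctionality of $L'$ inside $L$. This is exactly the condition $(P',K') \leq_{P/K} (P,K)$ from Definition~\ref{def:subpos}(iii): the map $pK' \mapsto pK$ is an isomorphism precisely when $K' = K \cap P'$ and $P'K = P$, and one checks the second equality follows from the fact that $\theta'$ and $\theta$ both surject onto $U$. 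The relation $\theta' = \phi\theta$ then falls out: for $p \in P'$, the pair $(p, (pK')^{\theta'})$ lies in $L \leq G \times U$ forces $(pK')^{\theta'} = (pK)^\theta = (pK')^{\phi\theta}$, where $\phi$ is the canonical map from Theorem~\ref{secdecom}(iii).

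For the converse, I would assume $(P',K') \leq_{P/K} (P,K)$ and $\theta' = \phi\theta$, and verify directly that each generator $(p, (pK')^{\theta'})$ of $L'$ with $p \in P'$ satisfies $(pK')^{\theta'} = (pK')^{\phi\theta} = (pK)^\theta$, so the pair equals $(p, (pK)^\theta) \in L$. Since $P' \leq P$, this shows $L' \subseteq L$, and both being subgroups gives $L' \leq L$, i.e.\ $\theta' \leq \theta$.

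The main obstacle, I expect, is pinning down why $\leq_{P/K}$ (rather than some weaker relation) is the exact section-level condition. A priori, $L' \leq L$ could be compatible with $P'$ strictly smaller than $P$ together with a larger kernel; the content of $\leq_{P/K}$ is that the \emph{same} group $U$ is the target of both isomorphisms, which rigidifies the situation and forces $P'/K' \cong U \cong P/K$ with the canonical map $\phi$ being an isomorphism. So the crux is to argue that $L' \leq L$ with both relations having trivial $U$-side kernel and full $U$-projection implies that $\phi$ is an isomorphism and not merely a homomorphism, which is precisely Definition~\ref{def:subpos}(iii). Once the dictionary between the graph inclusion and the section data is set up carefully, the rest is routine verification on generators.
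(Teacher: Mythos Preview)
Your proposal is correct and follows the same overall approach as the paper: write out the graphs $L$ and $L'$ explicitly and compare elements. The converse direction is identical to the paper's.

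In the forward direction, however, the paper organizes the argument more efficiently than you do. You work to establish $(P',K') \leq_{P/K} (P,K)$ directly, by separately verifying the injectivity condition $K' = K \cap P'$ and the surjectivity condition $P'K = P$ for the canonical map~$\phi$, and only afterwards derive $\theta' = \phi\theta$. The paper reverses this: from $L' \leq L$ it first obtains $P' \leq P$ and $K' \leq K$ (so $\phi$ is at least well-defined), then for $p \in P'$ uses that $(p,(pK')^{\theta'}) \in L$ has a unique second coordinate to conclude $(pK')^{\theta'} = (pK)^\theta = (pK')^{\phi\theta}$, whence $\theta' = \phi\theta$. The key observation is then immediate: $\phi = \theta' \theta^{-1}$ is a composite of two isomorphisms, so $\phi$ is itself an isomorphism, which is exactly the condition $(P',K') \leq_{P/K} (P,K)$. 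This one line replaces your separate checks of $K' = K \cap P'$ and $P'K = P$, and in particular dissolves what you flagged as the ``main obstacle.''
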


\begin{proof}
  Let $L = \{(p, (pK)^\theta) : p \in P\}$ be the graph of $\theta$ and let
  $L' = \{(p, (pK')^{\theta'}) : p \in P'\}$ be that of $\theta'$.

  Assume first that $L' \leq L$.  This clearly implies $P' \leq P$ and
  $K' \leq K$.  Moreover, for any $p \in P'$, if
  $(p, (pK')^{\theta'}) \in L' \leq L$ then
  $(pK')^{\theta'} = (pK)^{\theta} = (pK')^{\phi \theta}$ as
  $(p, (pK)^{\theta})$ is the unique element in $L$ with first component $p$.
  Hence $\theta' = \phi \theta$.  Now $\phi = \theta' \theta^{-1}$ is an
  isomorphism, whence $(P',K') \leq_{P/K} (P,K)$.

  Conversely, if $(P',K') \leq (P,K)$ and $\theta' = \phi \theta$ then
  clearly
  $(p, (pK')^{\theta'}) = (p, (pK')^{\phi \theta}) = (p, (pK)^{\theta}) \in
  L$ for all $p \in P'$, whence $L' \leq L$.
\end{proof}

More generally, for finite groups $U, U' \subseteq G$, suppose that sections
$(P, K) \in \Sec{G}(U)$ and $(P', K') \in \Sec{G}(U')$ are such that
$(P', K') \leq (P, K)$ with canonical homomorphism
$\phi \colon P'/K' \to P/K$.  If $\theta \colon P/K \to U$ and
$\theta' \colon P'\!/K' \to U'$ are isomorphisms then the composition
\begin{align*}
  \lambda:= (\theta')^{-1} \phi \theta
\end{align*}
obviously is a homomorphism from $U'$ to $U$, see Fig.~\ref{fig:lambda}.
\begin{figure}[ht]
\begin{center}
\begin{tikzpicture}[>=latex]
\node (P1) at (-2,5) {$P/K$};
\node (P2) at (1,5) {$U$};
\node (P1') at (-2,3) {$P'\!/K'$};
\node (P2') at (1,3) {$U'$};

\draw[->] (P1) -- node[above] {$_\theta$} (P2);
\draw[->] (P1') -- node[above] {$_{\theta'}$} (P2');
\draw[->] (P1') -- node[right] {$_{\phi}$} (P1);
\draw[->] (P2') -- node[right] {$_{\lambda}$} (P2);
\end{tikzpicture}
\end{center}
\caption{$\lambda \colon U' \to U$}\label{fig:lambda}
\end{figure}

In case $U = U'$, the previous lemma says that $\theta' \leq \theta$ if and
only if $\lambda = \id_U$.  If $U \neq U'$ then $\theta$ and $\theta'$ are
incomparable.  However, there are the following connections to the partial
orders on $\Sec{G}$.

\begin{lem}\label{la:mor-corr}
  Let  $\theta \colon P/K \to U$ be a $U$-morphism. Then $\theta$ induces
  \begin{enumerate}
  \item[(i)] an order preserving bijection between the sections $(P',K')$ of
    $G$ with $(P',K') \leq_K (P, K)$ and the subgroups of $U$;
  \item[(ii)] an order preserving bijection between the sections $(P',K')$ of
    $G$ with $(P',K') \geq_P (P, K)$ and the normal subgroups of $U$.
  \end{enumerate}
\end{lem}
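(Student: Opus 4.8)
The plan is to use the Correspondence Theorem as the underlying engine, transporting it across the isomorphism $\theta \colon P/K \to U$, and to connect the two described families of sections to subgroups and normal subgroups of $P/K$ via Remark~\ref{rmk:corr-secs-subs}, which already gives exactly these correspondences internally to $G$. Concretely, for part (i), Remark~\ref{rmk:corr-secs-subs} furnishes a bijection between the subgroups of $P/K$ and the sections $(P', K') \leq_K (P, K)$: a subgroup $H/K \leq P/K$ corresponds to the section $(H, K)$, which has the same bottom group $K$ and satisfies $(H, K) \leq_K (P, K)$ by Definition~\ref{def:subpos}(ii). Composing this with the isomorphism $\theta$, which carries subgroups of $P/K$ bijectively to subgroups of $U$, yields the claimed bijection. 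For part (ii), the same remark gives a bijection between normal subgroups of $P/K$ and sections $(P', K')$ with $(P, K) \leq_P (P', K')$, i.e. $(P', K') \geq_P (P, K)$; a normal subgroup $N/K \trianglelefteq P/K$ corresponds to the section $(P, N)$, which shares the top group $P$. Again composing with $\theta$, which maps normal subgroups of $P/K$ to normal subgroups of $U$, delivers the bijection.

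The remaining work is to verify that both bijections are order preserving. For (i), I would check that if $K \leq H_1 \leq H_2 \leq P$ then $(H_1, K) \leq (H_2, K)$ as sections, which is immediate from componentwise inclusion, and that $\theta$ respects inclusion of the corresponding subgroups $H_1/K \leq H_2/K$ of $P/K$, hence of their images in $U$. Since $\theta$ is an isomorphism of groups, it is automatically an isomorphism of subgroup lattices, so this direction is formal. For (ii), the subtlety is that the ambient order on the relevant sections is $\geq_P$ rather than $\leq$; I would confirm that under the correspondence $N/K \mapsto (P, N)$, a larger normal subgroup $N_2 \supseteq N_1$ of $P/K$ gives a section $(P, N_2)$ that is $\leq$-above $(P, N_1)$ in $\Sec{G}$ (same top group, larger bottom group), and that this matches the intended ordering in the statement. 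The compatibility of $\theta$ with normality follows because conjugation in $U$ transports to conjugation in $P/K$ under $\theta^{-1}$.

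The main obstacle, such as it is, will be bookkeeping rather than substance: making the direction of the order explicit in part (ii) and ensuring the stated order is the one induced by the subgroup lattice on sections with common top group. I expect the proof to be essentially a restatement of Remark~\ref{rmk:corr-secs-subs} followed by transport of structure along $\theta$, so the argument is short. I would present it as two parallel paragraphs, each invoking the Correspondence Theorem through Remark~\ref{rmk:corr-secs-subs} and then observing that $\theta$, being an isomorphism, preserves inclusions and normality, which is all that order preservation requires.
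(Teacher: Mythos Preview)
Your proposal is correct and follows the same approach as the paper, which simply states that the lemma is an immediate consequence of Remark~\ref{rmk:corr-secs-subs} on the Correspondence Theorem. You have spelled out in detail what the paper leaves implicit: that the bijections from the remark are transported along the isomorphism $\theta$ to land in $U$, and that order preservation follows because group isomorphisms respect subgroup inclusion and normality.
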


\begin{proof}
  This is an immediate consequence of Remark~\ref{rmk:corr-secs-subs} on the
  Correspondence Theorem.
\end{proof}

\subsection{The Partial Order of Morphism Classes.}
\label{sec:partial-order-morphism-classes}

The partial order $\leq$ on $\Mor{G}(U)$ is compatible in the sense of
Section~\ref{sec:class-incidence-matrix} with the conjugation action of $G$,
and hence yields a class incidence matrix
\begin{align*}
  \CIM^{G}_U (\leq) = \bigl(\mathbf{a}(\theta,\theta')\bigr)_{[\theta], [\theta'] \in \Mor{G}(U)/G},
\end{align*}
where, for $\theta, \theta' \in \Mor{G}(U)$,
\begin{align*}
  \mathbf{a}({\theta, \theta'}) = \# \{\theta^a \geq \theta' : a \in G \}.
\end{align*}
This matrix is a submatrix of the class incidence matrix of the subgroup
lattice of $G \times U$, corresponding to the classes of subgroups which
occur as graphs of $U$-morphisms.

\begin{prop}
  Suppose that $\theta, \theta' \in \Mor{G}(U)$ have graphs
  $L, L' \leq G \times U$.  Then
\begin{align*}
  \mathbf{a}({\theta,\theta'}) = \# \{L^{(a, u)} \geq L' : (a, u) \in G \times U \}\text.
\end{align*}
\end{prop}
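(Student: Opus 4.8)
The plan is to compare the two counts $\mathbf{a}(\theta,\theta') = \#\{\theta^a \geq \theta' : a \in G\}$ and $\#\{L^{(a,u)} \geq L' : (a,u) \in G \times U\}$ by analysing how the $G$-action on $U$-morphisms relates to the $(G \times U)$-conjugation action on their graphs. First I would record the key compatibility: if $\theta$ has graph $L$, then the graph of $\theta^a = \gamma_a^{-1}\theta$ is $L^{(a,1)}$, and more generally the graph of $\theta\alpha$ for $\alpha \in \Aut(U)$ is $L^{(1,u)}$ whenever $\alpha$ is conjugation by $u \in U$ (i.e.\ $\alpha \in \Inn(U)$). The point is that conjugating $L$ by $(a,u) \in G \times U$ produces the graph of the $U$-morphism $\gamma_a^{-1}\theta\gamma_u$, where $\gamma_u$ is the inner automorphism of $U$ determined by $u$. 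Thus the $(G \times U)$-orbit of $L$ corresponds, under the graph bijection from Proposition~\ref{prop:comparing-morphisms}, to the set $\{\gamma_a^{-1}\theta\gamma_u : a \in G,\, u \in U\}$ of $U$-morphisms.

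Next I would exploit Proposition~\ref{prop:comparing-morphisms}, which tells us that $L^{(a,u)} \geq L'$ holds if and only if the corresponding $U$-morphism $\gamma_a^{-1}\theta\gamma_u$ dominates $\theta'$ in the morphism order $\leq$ on $\Mor{G}(U)$. So the right-hand count becomes $\#\{(a,u) \in G \times U : (\gamma_a^{-1}\theta\gamma_u) \geq \theta'\}$, and I must show this equals $\mathbf{a}(\theta,\theta') = \#\{a \in G : \theta^a \geq \theta'\}$, possibly up to a correcting factor that turns out to be $1$ because of how the matrix is built as a class incidence matrix. The natural strategy is to show that the extra freedom coming from the $U$-factor is absorbed: for each $a \in G$ with $\theta^a = \gamma_a^{-1}\theta \geq \theta'$, the set of $u \in U$ for which $\gamma_a^{-1}\theta\gamma_u \geq \theta'$ still holds is governed by a stabilizer, and the fibre structure matches up so that orbit counts on each side agree.

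Concretely, I would fix a class representative and count via the orbit–stabilizer correspondence. Since $\Aut(U)$ acts regularly on $\Mor{G}^{P,K}(U)$ and the $\Inn(U)$-part of this action is exactly the $U$-conjugation appearing in $(G \times U)$-conjugacy, the key identity is that $\gamma_a^{-1}\theta\gamma_u$ and $\gamma_a^{-1}\theta$ differ by the inner automorphism $\theta^{-1}\gamma_u\theta \in \Inn(U)$, and the graph order condition against $L'$ — whose image morphism $\theta'$ has a fixed target behaviour — is invariant under this inner twisting precisely on the relevant orbit. The main obstacle I anticipate is bookkeeping the $\Inn(U)$ redundancy cleanly: one must verify that conjugating the graph $L$ by $(1,u)$ for $u \in U$ never produces a \emph{new} graph incomparable to what the $G$-action alone produces, i.e.\ that the $U$-conjugation action on graphs factors through the already-counted $G$-orbit data because $\Inn(U) \leq A_\theta$ for every $\theta$ (as noted after the automizer definition). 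Once this is established, the equality $\mathbf{a}(\theta,\theta') = \#\{L^{(a,u)} \geq L' : (a,u) \in G \times U\}$ follows by matching the two orbit-counting expressions term by term.
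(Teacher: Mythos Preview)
Your proposal eventually lands on the right idea---that conjugating $L$ by elements of $\{1\} \times U$ never produces a subgroup outside the $G$-orbit of $L$---but the route is tangled, and along the way you conflate two different counts. Both $\mathbf{a}(\theta,\theta') = \#\{\theta^a \geq \theta' : a \in G\}$ and $\#\{L^{(a,u)} \geq L' : (a,u) \in G \times U\}$ count \emph{distinct objects} (morphisms, respectively subgroups) in an orbit, not the group elements $a$ or pairs $(a,u)$ producing them. When you rewrite the right-hand side as $\#\{(a,u) \in G \times U : \gamma_a^{-1}\theta\gamma_u \geq \theta'\}$ you have silently switched to counting pairs, which is why you then find yourself worrying about stabilizers, fibre sizes, and a possible ``correcting factor''. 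None of that is needed once you keep straight that both sides are sizes of subsets of orbits.

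The paper's proof is one line: for any $u \in U$, surjectivity of $\theta \colon P/K \to U$ gives some $p \in P$ with $(p,u) \in L$; since $(p,u) \in L$ normalizes $L$, we get $L^{(1,u)} = L^{(p^{-1},1)}$. Hence the $(G \times U)$-orbit of $L$ equals its $(G \times 1)$-orbit as a set of subgroups, and under the order-preserving graph bijection $\theta^a \leftrightarrow L^{(a,1)}$ the set $\{\theta^a : \theta^a \geq \theta'\}$ maps bijectively onto $\{L^{(a,u)} : L^{(a,u)} \geq L'\}$. Your observation that $\Inn(U) \leq A_\theta$ is essentially a repackaging of the same fact---the inner automorphism $\gamma_u$ of $U$ is already realized by conjugation by some $p \in P \leq N_G(P,K)$---so your approach would work once the counting confusion is cleared up, but the paper's version sidesteps automizers entirely.
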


\begin{proof}
The result follows if we can show that the
  $(G \times U)$-orbit of $L$ is not larger than its $G$-orbit.
For this,
  let $u \in U$.  Then $(p, u) \in L$ for some $p \in P$
and hence $L^{(p,u)} = L$.  But then $L^{(1,u)} = L^{(p^{-1}, 1)}$.
\end{proof}

As, for $\theta, \theta' \in \Mor{G}(U)$ and $\alpha \in \Aut(U)$, we have
\begin{align*}
 \theta' \leq \theta \iff \theta' \alpha \leq \theta \alpha,
\end{align*}
the matrix $\CIM^{G}_U (\leq)$ is compatible (in the sense of
Section~\ref{sec:class-incidence-matrix}) with the action of $\Out(U)$ on
$\Mor{G}(U)/G$.  In fact, this relates it to the class incidence matrix
$\CIM_U(\leq_{P/K})$ of $\Sec{G}(U)$ as follows.

\begin{prop}\label{pro:cim-mor}
With the row summing and column picking matrices corresponding to the $\Out(U)$-orbits on $\Mor{G}(U)/G$, we have
  \begin{align*}
    \CIM_U(\leq_{P/K}) = R(\Out(U)) \cdot \CIM^{G}_U (\leq) \cdot C(\Out(U))\text.
  \end{align*}
\end{prop}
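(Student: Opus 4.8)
The plan is to compute both sides entry-by-entry and match them, using the identification of the $\Out(U)$-orbits on $\Mor{G}(U)/G$ with the $G$-classes of sections of type $U$. By Proposition~\ref{actonmorph}(i), the map $[\theta]_G \mapsto [P,K]_G$ (sending a class of $U$-morphisms to the class of its domain section) has fibres exactly the transitive $\Out(U)$-sets $\Mor{G}^{P,K}(U)/G$; hence the $\Out(U)$-orbits on $\Mor{G}(U)/G$ are indexed by $\Sec{G}(U)/G$, and both $\CIM_U(\leq_{P/K})$ and $R(\Out(U)) \cdot \CIM^{G}_U(\leq) \cdot C(\Out(U))$ are square matrices with rows and columns labelled by $\Sec{G}(U)/G$. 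The compatibility of $\CIM^{G}_U(\leq)$ with the $\Out(U)$-action, noted just before the statement, together with Remark~\ref{rmk:comp}, guarantees that the right-hand product is independent of the chosen transversal, so it remains to show that corresponding entries agree.

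First I would fix $U$-morphisms $\sigma \colon P/K \to U$ and $\tau \colon P'/K' \to U$ representing two $\Out(U)$-orbits, equivalently the section classes $[P,K]_G$ and $[P',K']_G$. Directly from the definitions of $R(\Out(U))$ and $C(\Out(U))$ in Lemma~\ref{classin}, the $([\sigma]_G, [\tau]_G)$-entry of the product is the sum over the $\Out(U)$-orbit of $[\sigma]_G$ of the $[\tau]_G$-column entries of $\CIM^{G}_U(\leq)$. By Proposition~\ref{actonmorph}(ii) this orbit is $\{[\sigma\alpha]_G : \alpha \in D_{\sigma}\}$, so the entry equals $\sum_{\alpha \in D_{\sigma}} \mathbf{a}(\sigma\alpha, \tau)$. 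Since $\mathbf{a}(\sigma\alpha,\tau) = \#\{\psi \in [\sigma\alpha]_G : \tau \leq \psi\}$ and, again by Proposition~\ref{actonmorph}(ii), the classes $[\sigma\alpha]_G$ with $\alpha \in D_{\sigma}$ are precisely the distinct $G$-classes partitioning the set $S$ of all $U$-morphisms whose domain section is $G$-conjugate to $(P,K)$, the sum collapses into a single count $\#\{\psi \in S : \tau \leq \psi\}$.

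Finally I would evaluate this count using Proposition~\ref{prop:comparing-morphisms}: for $\psi \in S$ with domain $(P_\psi, K_\psi)$, one has $\tau \leq \psi$ if and only if $(P',K') \leq_{P/K} (P_\psi, K_\psi)$ and $\tau = \phi\,\psi$, where $\phi \colon P'/K' \to P_\psi/K_\psi$ is the canonical homomorphism. Because $\leq_{P/K}$ makes $\phi$ an isomorphism, for each section $(Q,L)$ that is $G$-conjugate to $(P,K)$ and satisfies $(P',K') \leq_{P/K} (Q,L)$ there is exactly one $\psi \in S$ with domain $(Q,L)$ and $\tau \leq \psi$, namely $\psi = \phi^{-1}\tau$. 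Hence $\#\{\psi \in S : \tau \leq \psi\} = \#\{(Q,L) \in [P,K]_G : (P',K') \leq_{P/K} (Q,L)\}$, which is exactly the $([P,K]_G, [P',K']_G)$-entry of $\CIM_U(\leq_{P/K})$, completing the proof.

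I would expect the main obstacle to be the bookkeeping in the middle step: justifying that $\{[\sigma\alpha]_G : \alpha \in D_{\sigma}\}$ partitions $S$ so that the transversal-indexed sum collapses to a count over $S$ without over- or under-counting. This rests on the regularity of the $\Aut(U)$-action on each $\Mor{G}^{P,K}(U)$ and on Proposition~\ref{actonmorph}, and care is needed to keep the row/column (conjugate-on-top versus fixed-lower-bound) orientation of the two class incidence matrices consistent throughout.
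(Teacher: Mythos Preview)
Your proof is correct and follows essentially the same approach as the paper's: both identify the $\Out(U)$-orbit of $[\sigma]_G$ with the set of all $U$-morphisms whose domain is $G$-conjugate to $(P,K)$, and then use Proposition~\ref{prop:comparing-morphisms} to show that for each conjugate section $(Q,L) \geq_{P/K} (P',K')$ there is exactly one morphism $\psi$ above $\tau$. The paper compresses this into two sentences; you spell out the transversal bookkeeping and the collapse of the sum explicitly, but there is no substantive difference in strategy or ingredients.
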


\begin{proof}
  By Proposition~\ref{actonmorph}(i), the union of the classes
  $[\theta \alpha]_G$, $\alpha \in \Aut(U)$, is the set of all
  $U$-morphisms of the form $(P/K)^a \to U$ for some $a \in G$.  This set
  contains, for each conjugate $(P/K)^a$ with $(P/K)^a \geq_{P/K} P'/K'$,
  exactly one $U$-morphism above~$\theta' \colon P'/K' \to U$, by
  Proposition~\ref{prop:comparing-morphisms}.
\end{proof}

\begin{ex}\label{ex:a5-1}
  Let $G = A_5$ and $U = 3$.  Then $\Mor{G}(U)/G$ consists of three classes,
  one with $(P,K) = (3,1)$ and two with $(P, K) = (A_4,2^2)$, permuted by
  $\Out(U)$.  We have
\begin{align*}
  \CIM^G_U(\leq) &= \tiny\left(\arraycolsep3pt
  \begin{array}{ccc}
    1&&\\
    1&1&\\
    1&\cdot&1\\
  \end{array}
\right)\text,&
  \CIM_U(\leq_{P/K}) &
= \tiny
\left(\arraycolsep3pt
  \begin{array}{ccc}
    1&\cdot&\cdot\\
    \cdot &1&1\\
  \end{array}
\right)
\cdot
\left(\arraycolsep3pt
  \begin{array}{ccc}
    1&&\\
    1&1&\\
    1&\cdot&1\\
  \end{array}
\right)
\cdot
\left(\arraycolsep3pt
  \begin{array}{cc}
    1&\cdot\\
    \cdot&1\\
    \cdot&\cdot\\
  \end{array}
\right)
= \small\left(
  \begin{array}{cc}
    1&\\
    2&1\\
  \end{array}
\right)\text.
\end{align*}
\end{ex}


\section{Subgroups of a Direct Product}
\label{sec:subgroups}

From now on, let $G_1$ and $G_2$ be finite groups.
In this section we describe the subgroups and the conjugacy classes
of subgroups of the direct product $G_1 \times G_2$ in terms of
properties of the groups $G_1$ and $G_2$.
By Goursat's
Lemma~\ref{goursat}, the subgroups of  $G_1 \times G_2$
correspond to isomorphisms between sections of $G_1$ and $G_2$.
Any such isomorphism arises as composition of two $U$-morphisms, for a
suitable finite group $U$.  This motivates the study of subgroups of
$G_1 \times G_2$ as pairs of $U$-morphisms.

\subsection{Pairs of Morphisms.}
\label{sec:pairs-morphisms}

Let $U$ be a finite group.
We call $L = (\theta \colon  P_1/K_1 \to P_2/K_2)$ a \emph{$U$-subgroup} of $G_1 \times G_2$ if $U$ is its Goursat type, i.e., if $P_i/K_i \cong U$, $i = 1,2$, and we denote by
$\Sub{G_1 \times G_2}(U)$
the set of all $U$-subgroups of $G_1 \times G_2$.
Given morphisms $\theta_i \colon P_i/K_i \to U$ in $\Mor{G_i}(U)$, $i = 1,2$,
composition yields an isomorphism
  $\theta = \theta_1 \theta_2^{-1} \colon P_1/K_1 \to P_2/K_2$
with whose graph is a $U$-subgroup
$L \leq G_1 \times G_2$.  Hence there
is a map
  $\Pi: \Mor{G_1}(U) \times \Mor{G_2}(U) \to \Sub{G_1 \times G_2}(U)$
defined by
\begin{align*}
 \Pi(\theta_1, \theta_2) = \theta_1 \theta_2^{-1}.
\end{align*}
In fact, the $(G_1,G_2)$-biset $\Sub{G_1 \times G_2}(U)$ is the
tensor product of the $(G_1, \Aut(U))$-biset $\Mor{G_1}(U)$
and the opposite of the $(G_2, \Aut(U))$-biset $\Mor{G_2}(U)$.

\begin{prop}
  $\Sub{G_1 \times G_2}(U) = \Mor{G_1}(U) \times_{\Aut(U)}
  \Mor{G_2}(U)^{\op}$.
\end{prop}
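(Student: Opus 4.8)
The plan is to show that the composition map $\Pi(\theta_1, \theta_2) = \theta_1 \theta_2^{-1}$ factors through the tensor product and that the induced map is a bijection respecting the $(G_1, G_2)$-biset structures. First I would unwind the tensor product. Since $\Mor{G_2}(U)^{\op}$ carries the left $\Aut(U)$-action $\alpha \cdot_{\op} \theta_2 = \theta_2 \alpha^{-1}$ inherited by inversion from the right $\Aut(U)$-action on $\Mor{G_2}(U)$, the orbit relation $(x,y).g = (x.g, g^{-1}.y)$ defining $\Mor{G_1}(U) \times_{\Aut(U)} \Mor{G_2}(U)^{\op}$ identifies $(\theta_1, \theta_2)$ with $(\theta_1 \alpha, \theta_2 \alpha)$ for all $\alpha \in \Aut(U)$. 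Since $\Pi(\theta_1 \alpha, \theta_2 \alpha) = (\theta_1 \alpha)(\theta_2 \alpha)^{-1} = \theta_1 \theta_2^{-1} = \Pi(\theta_1, \theta_2)$, the map $\Pi$ is constant on these orbits and so descends to a well-defined map $\bar\Pi$ on the tensor product.

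Next I would establish that $\bar\Pi$ is a bijection by exhibiting an inverse orbit by orbit. For surjectivity, given a $U$-subgroup $L$ with Goursat isomorphism $\theta \colon P_1/K_1 \to P_2/K_2$, I choose any $\theta_1 \in \Mor{G_1}(U)$ with domain $P_1/K_1$ and set $\theta_2 := \theta^{-1} \theta_1 \in \Mor{G_2}(U)$, which has domain $P_2/K_2$; then $\Pi(\theta_1, \theta_2) = \theta_1 \theta_2^{-1} = \theta$, so $L$ lies in the image. For injectivity, suppose $\Pi(\theta_1, \theta_2) = \Pi(\theta_1', \theta_2')$. Equality of graphs forces the Goursat sections to coincide, so $\theta_1, \theta_1'$ share the domain $P_1/K_1$ and $\theta_2, \theta_2'$ share the domain $P_2/K_2$. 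Setting $\alpha := \theta_1^{-1}\theta_1' \in \Aut(U)$, the relation $\theta_1 \theta_2^{-1} = \theta_1' (\theta_2')^{-1} = \theta_1 \alpha (\theta_2')^{-1}$ yields $\theta_2' = \theta_2 \alpha$ after cancelling $\theta_1$ and taking inverses. Hence $(\theta_1', \theta_2') = (\theta_1 \alpha, \theta_2 \alpha)$ lies in the same $\Aut(U)$-orbit, so $\bar\Pi$ is injective.

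Finally I would verify that $\bar\Pi$ respects the biset structures, the content being that conjugation of a graph corresponds to pre- and post-composition of its Goursat isomorphism with the relevant conjugation maps $\gamma_{a_i}$. Writing $L$ for the graph of $\theta = \theta_1 \theta_2^{-1}$, a direct computation shows that the left $G_1$-translate of $L$ has Goursat isomorphism $\gamma_{a_1}^{-1}\theta = (\gamma_{a_1}^{-1}\theta_1)\theta_2^{-1} = \Pi(a_1.\theta_1, \theta_2)$, matching the left $G_1$-action on the first factor. The right $G_2$-translate of $L$ has Goursat isomorphism obtained by composing $\theta$ with the conjugation map on the second coordinate, which agrees with $\Pi(\theta_1, \theta_2.a_2)$ once the opposite-biset convention on $\Mor{G_2}(U)^{\op}$ is taken into account. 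Thus $\bar\Pi$ intertwines both actions and is an isomorphism of $(G_1, G_2)$-bisets.

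I expect the bookkeeping in this last step to be the main obstacle: one must reconcile the inverse built into the conjugation action, the inversion introduced when the right $\Aut(U)$-action passes to the opposite biset, and the left-to-right convention for composing homomorphisms, in order to confirm that the $G_2$-action on the second factor lands on the correct side of $\theta$ and that the signs of the conjugation maps $\gamma_{a_i}$ come out right.
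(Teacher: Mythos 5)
Your proposal is correct and follows essentially the same route as the paper: the paper's proof likewise consists of observing that $\Pi$ is surjective and that its fibres are exactly the $\Aut(U)$-orbits $(\theta_1\alpha,\theta_2\alpha)$. You simply spell out the well-definedness and the $(G_1,G_2)$-biset equivariance, which the paper leaves implicit.
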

\begin{proof}
  For any $U$-subgroup $L = (\theta \colon P_1/K_1 \to P_2/K_2)$ there exist
  $\theta_i \in \Mor{G_i}(U)$, $i = 1,2$, such that
  $\theta = \Pi(\theta_1, \theta_2)$.  Moreover, for
  $\theta_i, \theta_i' \in \Mor{G_i}(U)$,  we have
  $\Pi(\theta_1, \theta_2) = \Pi(\theta_1', \theta_2')$ if and only if
  $\theta_1' \theta_1^{-1} = \theta_2' \theta_2^{-1}$ in $\Aut(U)$.
\end{proof}

It will be convenient to express the order of a $U$-subgroup in terms of $U$.
\begin{lem}\label{la:size-subgroup}
  Let $L = (\theta \colon P_1/K_1 \to P_2/K_2)$ be a $U$-subgroup
  of $G_1 \times G_2$.  Then $|L| 
  = |K_1||K_2| |U| = |P_1||P_2|/|U|$.
\end{lem}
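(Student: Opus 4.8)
The plan is to count the size of the $U$-subgroup $L$ directly from its description as the graph of a Goursat isomorphism. Recall that $L = (\theta \colon P_1/K_1 \to P_2/K_2)$ is, by Goursat's Lemma~\ref{goursat}, precisely the set
\begin{align*}
  L = \{(a_1, a_2) \in P_1 \times P_2 : (a_1 K_1)^\theta = a_2 K_2\}\text.
\end{align*}
First I would exploit the difunctional structure of this relation. For each coset $a_1 K_1 \in P_1/K_1$, the isomorphism $\theta$ selects exactly one coset $a_2 K_2 = (a_1 K_1)^\theta$ of $K_2$ in $P_2$, and the pairs $(a_1, a_2)$ in $L$ with first component in $a_1 K_1$ are exactly those with $a_1$ ranging over the $|K_1|$ elements of that coset and $a_2$ ranging over the $|K_2|$ elements of the matching coset. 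This gives $|K_1||K_2|$ pairs for each of the $|P_1/K_1|$ cosets.

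The key computation is then simply
\begin{align*}
  |L| = |P_1/K_1| \cdot |K_1| \cdot |K_2| = |P_1| \cdot |K_2|\text.
\end{align*}
Since $P_1/K_1 \cong U$, we have $|P_1/K_1| = |U|$, so $|L| = |U| \cdot |K_1| \cdot |K_2|$, which is the first of the two claimed expressions. For the second expression, I would use that $|P_i| = |U| \cdot |K_i|$ for $i = 1, 2$ (again because $P_i/K_i \cong U$), so that $|K_1||K_2| = |P_1||P_2|/|U|^2$, and substituting into $|L| = |U||K_1||K_2|$ yields $|L| = |U| \cdot |P_1||P_2|/|U|^2 = |P_1||P_2|/|U|$.

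I do not expect a genuine obstacle here: the statement is a counting identity and the difunctionality of $L$ established in the proof of Goursat's Lemma~\ref{goursat} does all the work. The only point requiring a little care is to be explicit that the fibres of the projection $L \to P_1$ (sending $(a_1, a_2) \mapsto a_1$) each have size $|K_2|$, equivalently that $L$ is a union of $|U|$ blocks each of the form (a $K_1$-coset) $\times$ (a $K_2$-coset), so that no double counting occurs. Once that bijective bookkeeping is stated cleanly, both equalities follow by elementary index arithmetic using $P_i/K_i \cong U$.
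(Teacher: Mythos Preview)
Your argument is correct and is exactly the natural counting argument one would expect. The paper itself states this lemma without proof, treating both equalities as immediate from the description of $L$ as the graph of its Goursat isomorphism together with $|P_i/K_i| = |U|$; your write-up simply makes that bookkeeping explicit.
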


\subsection{Comparing Subgroups.}
\label{sec:comparing-subgroups}
Let $U$, $U'$ be finite groups.
We now describe and analyze the partial order of subgroups of $G_1 \times G_2$
in terms of pairs of morphisms.

\begin{prop} \label{prop:L'<=L}
  Let
  \begin{align*}
(\theta_i \colon P_i/K_i \to U) \in \Mor{G_i}(U)
  \quad \text{and} \quad (\theta'_i \colon P'_i/K'_i \to U') \in \Mor{G_i}(U')\text,\quad i = 1,2\text,
  \end{align*}
  be morphisms, let $\theta = \Pi(\theta_1, \theta_2)$,
  $\theta' = \Pi(\theta'_1, \theta'_2)$ with corresponding subgroups $L$,
  $L'$ of $G_1 \times G_2$.  Then $L' \leq L$ if and only if
  \begin{enumerate}
  \item $(P'_i, K'_i) \leq (P_i, K_i)$ as sections of $G_i$, $i = 1,2$; and
  \item $\lambda_1 = \lambda_2$, where
    $\lambda_i = (\theta'_i)^{-1} \phi_i \theta_i$,
    and $\phi_i \colon P'_i/K'_i \to P_i/K_i$ is the homomorphism
    defined by $(K'_ip)^{\phi_i} = K_i p$, for $p \in P'_i$, $i = 1,2$.
  \end{enumerate}
\end{prop}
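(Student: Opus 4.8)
The plan is to work directly with the explicit descriptions of the subgroups $L$ and $L'$ as graphs of the composite isomorphisms $\theta = \theta_1 \theta_2^{-1}$ and $\theta' = \theta_1' (\theta_2')^{-1}$. First I would write out
\begin{align*}
  L &= \{(p_1, p_2) \in P_1 \times P_2 : (p_1 K_1)^{\theta_1} = (p_2 K_2)^{\theta_2}\},\\
  L' &= \{(p_1, p_2) \in P_1' \times P_2' : (p_1 K_1')^{\theta_1'} = (p_2 K_2')^{\theta_2'}\},
\end{align*}
using the fact that a pair lies in the graph of $\theta_1 \theta_2^{-1}$ precisely when its two components have the same image in $U$ under the respective $U$-morphisms. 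The goal is then to show that the containment $L' \leq L$ is equivalent to the conjunction of the two conditions, so I would prove the two implications separately.

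\textbf{The forward direction.}
Assume $L' \leq L$. The componentwise inclusions $(P_i', K_i') \leq (P_i, K_i)$ in condition (i) follow by projecting: the projection of $L'$ onto $G_i$ is contained in that of $L$, giving $P_i' \leq P_i$, and the fibre over the identity, namely $k_i(L') \leq k_i(L)$, gives $K_i' \leq K_i$. Each $P_i'/K_i'$ is isomorphic to $U'$ and each $P_i/K_i$ to $U$, so these are genuine section inclusions and the canonical homomorphisms $\phi_i \colon P_i'/K_i' \to P_i/K_i$ are defined. For condition (ii), the key observation is that membership in $L$ is governed by the single relation $(p_1 K_1)^{\theta_1} = (p_2 K_2)^{\theta_2}$ in $U$, and similarly for $L'$ in $U'$. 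Taking an arbitrary element $(p_1, p_2) \in L'$, the hypothesis $L' \leq L$ forces the $U$-relation to hold as well; rewriting both sides through $\phi_i$ and the definition $\lambda_i = (\theta_i')^{-1}\phi_i\theta_i$ should translate the two relations into the assertion that $\lambda_1$ and $\lambda_2$ agree as maps $U' \to U$ on every element, i.e.\ $\lambda_1 = \lambda_2$. This is really a diagram chase around the square in Fig.~\ref{fig:lambda}, applied to both coordinates simultaneously.

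\textbf{The converse and the main obstacle.}
For the converse, I would assume (i) and (ii) and take any $(p_1, p_2) \in L'$, so that $(p_1 K_1')^{\theta_1'} = (p_2 K_2')^{\theta_2'}$ in $U'$. Using $\theta_i' = \phi_i \theta_i \lambda_i^{-1}$ (the inverse relation defining $\lambda_i$) together with $\lambda_1 = \lambda_2$, I would push this equality forward into $U$ and obtain $(p_1 K_1)^{\theta_1} = (p_2 K_2)^{\theta_2}$, which is exactly the condition for $(p_1, p_2) \in L$. The main obstacle I anticipate is bookkeeping rather than conceptual: $\phi_i$ is in general only a homomorphism (injective, by the $\leq_{P/K}$ structure inherited from Proposition~\ref{prop:comparing-morphisms}) and not an isomorphism, so $\lambda_i$ need not be invertible, and I must be careful to apply the maps in the correct order and only in the direction $U' \to U$ where the relevant compositions are well-defined. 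I expect the cleanest route is to phrase everything as the commutativity of the two copies of the square in Fig.~\ref{fig:lambda} sharing the common target $U$, so that $L' \leq L$ becomes the statement that the two induced maps $U' \to U$ coincide — making condition (ii) transparent and reducing the argument to verifying that the componentwise section inclusions in (i) are exactly what is needed for the squares to exist.
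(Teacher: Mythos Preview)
Your approach is essentially the same as the paper's: write $L$ and $L'$ explicitly as graphs, observe that $L' \leq L$ forces the section inclusions and then reduces to the identity $(p_i K_i)^{\theta_i} = (p_i K_i')^{\phi_i \theta_i} = (p_i K_i')^{\theta_i' \lambda_i}$, from which $\lambda_1 = \lambda_2$ is immediate; the converse is the same computation read backwards by applying $\lambda_1 = \lambda_2$ to the $U'$-equation.

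One correction: your parenthetical claim that $\phi_i$ is injective is wrong in this generality. Proposition~\ref{prop:comparing-morphisms} concerns the special case $U' = U$ and $(P_i', K_i') \leq_{P/K} (P_i, K_i)$; here $(P_i', K_i') \leq (P_i, K_i)$ is only componentwise inclusion, and $\phi_i$ has kernel $(K_i \cap P_i')/K_i'$, which need not be trivial. Fortunately you never actually use injectivity of $\phi_i$, and you already recognise that $\lambda_i$ need not be invertible and that everything should be pushed forward along $U' \to U$ --- so the slip is harmless, but drop the parenthetical.
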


\begin{figure}[ht]
\begin{center}
\begin{tikzpicture}[>=latex]
\node (P1) at (-4,5) {$P_1/K_1$};
\node (U1) at (-1,5) {$U$};
\node (P1') at (-4,3) {$P_1'/K_1'$};
\node (U1') at (-1,3) {$U'$};

\node (P2) at (4,5) {$P_2/K_2$};
\node (U2) at (1,5) {$U$};
\node (P2') at (4,3) {$P_2'/K_2'$};
\node (U2') at (1,3) {$U'$};

\draw[->] (P1) -- node[above] {$_{\theta_1}$} (U1);
\draw[->] (P1') -- node[above] {$_{\theta_1'}$} (U1');
\draw[->] (P1') -- node[right] {$_{\phi_1}$} (P1);
\draw[->] (U1') -- node[right] {$_{\lambda_1}$} (U1);

\draw[->] (P2) -- node[above] {$_{\theta_2}$} (U2);
\draw[->] (P2') -- node[above] {$_{\theta_2'}$} (U2');
\draw[->] (P2') -- node[right] {$_{\phi_2}$} (P2);
\draw[->] (U2') -- node[right] {$_{\lambda_2}$} (U2);
\end{tikzpicture}
\end{center}
\caption{$L' \leq L$}\label{fig:sub-leq}
\end{figure}

\begin{proof}
  Write
  $L' = \{ (p_1', p_2') \in P_1' \times P_2' : (p_1'
  K_1')^{\theta_1'} = (p_2' K_2')^{\theta_2'} \}$
  and
  $L = \{ (p_1, p_2) \in P_1 \times P_2 : (p_1 K_1)^{\theta_1} =
  (p_2 K_2)^{\theta_2} \}$.

  Then $L' \leq L$ if and only if $(P'_i, K'_i) \leq (P_i, K_i)$, $i = 1, 2$,
  and, for $p_i \in P_i'$, we have $(p_1 K_1)^{\theta_1} = (p_2 K_2)^{\theta_2}$.
  But if $p_i \in P_i'$ then
  \begin{align*}
    (p_i K_i)^{\theta_i} = (p_i K_i')^{\phi_i \theta_i}
    = (p_i K_i')^{\theta_i' \lambda_i}\text.
  \end{align*}
  So $(p_1 K_1)^{\theta_1} = (p_2 K_2)^{\theta_2}$ if and only if
  $\lambda_1 = \lambda_2$, see Fig.~\ref{fig:sub-leq}.
\end{proof}

\begin{cor} \label{cor:L'<=L}
With the notation of Proposition~\ref{prop:L'<=L},
  $L' \leq L$ if and only if
\begin{enumerate}
\item $(P'_i, K'_i) \leq (P_i, K_i)$ as sections of $G_i$, $i = 1,2$;
\item $\phi_1 \theta = \theta' \phi_2$
\end{enumerate}
\end{cor}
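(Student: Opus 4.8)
The plan is to derive Corollary~\ref{cor:L'<=L} directly from Proposition~\ref{prop:L'<=L}, since the two statements share condition~(i) verbatim and differ only in the second condition. The entire task therefore reduces to showing that, under condition~(i), the equality $\lambda_1 = \lambda_2$ of Proposition~\ref{prop:L'<=L}(ii) is equivalent to the single relation $\phi_1 \theta = \theta' \phi_2$, where $\theta = \Pi(\theta_1, \theta_2) = \theta_1 \theta_2^{-1}$ and $\theta' = \Pi(\theta_1', \theta_2') = \theta_1' (\theta_2')^{-1}$.

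First I would recall the definitions $\lambda_i = (\theta_i')^{-1} \phi_i \theta_i$ for $i = 1, 2$, and the fact that each $\theta_i$ and $\theta_i'$ is an isomorphism, hence invertible. The key algebraic step is to manipulate the equation $\lambda_1 = \lambda_2$ into the claimed form by composing on the left and right with suitable isomorphisms. Writing out $\lambda_1 = \lambda_2$ gives $(\theta_1')^{-1} \phi_1 \theta_1 = (\theta_2')^{-1} \phi_2 \theta_2$. Since maps compose from the right in this paper, I would rearrange this equation by left-multiplying by $\theta_1'$ and using the Goursat composites: the goal is to isolate $\phi_1 \theta_1 \theta_2^{-1} = \phi_1 \theta$ on one side and $\theta_1' (\theta_2')^{-1} \phi_2 = \theta' \phi_2$ on the other. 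Concretely, from $(\theta_1')^{-1} \phi_1 \theta_1 = (\theta_2')^{-1} \phi_2 \theta_2$ I would compose both sides appropriately so that the ambient group $U'$-to-$U$ homomorphisms $\lambda_i$ disappear and only the canonical homomorphisms $\phi_i$ and the Goursat isomorphisms $\theta, \theta'$ remain.

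The cleanest route is to track the underlying maps on cosets. For $p \in P_1'$, one has $(pK_1')^{\lambda_1} $ computed two ways, and similarly the diagram in Fig.~\ref{fig:sub-leq} commutes precisely when $\lambda_1 = \lambda_2$. I would verify that $\phi_1 \theta$ and $\theta' \phi_2$ are both homomorphisms from $P_1'/K_1'$ to $P_2/K_2$, and that their equality is exactly the statement that the two composite paths around the combined diagram agree. Specifically, $\phi_1 \theta = \phi_1 \theta_1 \theta_2^{-1}$ while $\theta' \phi_2 = \theta_1' (\theta_2')^{-1} \phi_2$; using $\lambda_i = (\theta_i')^{-1}\phi_i\theta_i$ to substitute $\phi_i = \theta_i' \lambda_i \theta_i^{-1}$, the two expressions become $\theta_1' \lambda_1 \theta_2^{-1}$ and $\theta_1' \lambda_2 \theta_2^{-1}$ respectively, and these are equal if and only if $\lambda_1 = \lambda_2$ (since $\theta_1'$ and $\theta_2^{-1}$ are isomorphisms that may be cancelled). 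This substitution is the crux of the argument.

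I do not expect any genuine obstacle here; the result is essentially a restatement of Proposition~\ref{prop:L'<=L} in a more symmetric form, and the proof is a short diagram-chase. The only point demanding care is bookkeeping with the right-action convention for homomorphisms, so that the cancellation of $\theta_1'$ on the left and $\theta_2^{-1}$ on the right is performed in the correct order; getting the composition order backwards would produce a spurious inverse. Once the substitution $\phi_i = \theta_i' \lambda_i \theta_i^{-1}$ is made and the common isomorphisms are cancelled, the equivalence is immediate, and the corollary follows by combining this with condition~(i) carried over unchanged from the proposition.
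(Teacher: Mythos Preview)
Your argument is correct. The paper itself gives no proof for this corollary, treating it as an immediate reformulation of Proposition~\ref{prop:L'<=L}; your substitution $\phi_i = \theta_i' \lambda_i \theta_i^{-1}$ and the resulting cancellation $\phi_1\theta = \theta_1'\lambda_1\theta_2^{-1}$, $\theta'\phi_2 = \theta_1'\lambda_2\theta_2^{-1}$ is exactly the routine diagram-chase the authors leave to the reader.
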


The partial orders on sections introduced in
Definition~\ref{def:subpos}
give rise to relations on the subgroups of $G_1 \times G_2$, as follows.

\begin{defn}\label{def:subpos-grp}
  Let $L = (\theta \colon P_1/K_1 \to P_2/K_2)$ and
  $L' = (\theta' \colon P'_1/K'_1 \to P'_2/K'_2)$ be subgroups of
  $G_1 \times G_2$ and suppose that $L' \leq L$.  We write
  \begin{itemize}
  \item[(i)] $L' \leq_P L$, if
    $(P'_i, K'_i) \leq_P (P_i, K_i)$, $i = 1,2$,\\
    i.e., if both sections of $L'$ and $L$ have the same top groups;
  \item[(ii)] $L' \leq_K L$, if
    $(P'_i, K'_i) \leq_K (P_i, K_i)$, $i = 1,2$,\\
    i.e., if both sections of $L'$ and $L$ have the same bottom groups;
  \item[(iii)] $L' \leq_{P/K} L$, if
    $(P'_i, K'_i) \leq_{P/K} (P_i, K_i)$, $i = 1,2$,\\
    i.e., if the canonical homomorphisms $\phi_i \colon P'_i/K'_i \to P_i/K_i$
    are isomorphisms.
  \end{itemize}
\end{defn}

All three relations are obviously partial orders.  Moreover, they decompose
the partial order $\leq$ on the subgroups of $G_1 \times G_2$, in analogy to
Corollary~\ref{ince}.

\begin{thm}\label{thm:intermediate-subgroups}
Let $L = (\theta \colon P_1/K_1 \to P_2/K_2)$ and
$L' = (\theta' \colon P_1'/K_1' \to P_2'/K_2')$ be such that $L' \leq L$.
Define a map $\hat{\theta}' \colon P'_1/(P'_1 \cap K_1) \to P'_2/(P'_2 \cap K_2)$
by $(p_1(P'_1 \cap K_1))^{\hat{\theta}'} = p_2(P'_2 \cap K_2)$, whenever $p_i \in P_i'$
are such that $(p_1 P_1')^{\theta'} = p_2 P_2'$,
and a map
$\tilde{\theta} \colon P_1'K_1/K_1 \to P_2'K_2/K_2$
by $(p_1K_1)^{\tilde{\theta}} = p_2K_2$
whenever $p_i \in P_i'$ are such that $(p_1K_1)^{\theta} = p_2K_2$.  Then
\begin{itemize}
\item[(i)] $\hat{\theta}'$ and $\tilde{\theta}$ are isomorphisms with
  corresponding graphs $L_{\hat{\theta}'}$ and
  $L_{\tilde{\theta}} \leq G_1 \times G_2$.
\item[(ii)] $L_{\hat{\theta}'}$ and $L_{\tilde{\theta}}$
are the unique subgroups of $G_1 \times G_2$ with
$L' \leq_P L_{\hat{\theta}'} \leq_{P/K} L_{\tilde{\theta}} \leq_K L$.
\end{itemize}
\end{thm}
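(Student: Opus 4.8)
The plan is to realize both interpolating subgroups as graphs of \emph{restrictions} of the two given Goursat isomorphisms $\theta$ and $\theta'$, and to let the compatibility condition coming from $L' \leq L$ do the essential work. First I would invoke Corollary~\ref{cor:L'<=L}: the hypothesis $L' \leq L$ is equivalent to the coordinatewise section nesting $(P_i', K_i') \leq (P_i, K_i)$ together with the single compatibility $\phi_1 \theta = \theta' \phi_2$, where $\phi_i \colon P_i'/K_i' \to P_i/K_i$ is the canonical homomorphism. Transposing the roles of the two factors (applying the same corollary to $(L')^{\mathrm{op}} \leq L^{\mathrm{op}}$ in $G_2 \times G_1$, whose Goursat isomorphisms are $(\theta')^{-1}$ and $\theta^{-1}$) yields the companion relation $\phi_2 \theta^{-1} = (\theta')^{-1} \phi_1$. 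In each coordinate separately, Theorem~\ref{secdecom} already pins down the intermediate sections: between $(P_i', K_i')$ and $(P_i, K_i)$ they are $(P_i', K_i \cap P_i')$ and $(P_i' K_i, K_i)$, with $p(K_i \cap P_i') \mapsto p K_i$ an isomorphism of the two quotients. The task is therefore to glue these coordinatewise pictures through $\theta$ and $\theta'$.

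For part (i) I would define $\tilde\theta$ as the plain restriction of $\theta$ to the subgroup $P_1' K_1/K_1$ of $P_1/K_1$, which is automatically an injective homomorphism. Feeding $p_1 \in P_1'$ through $\phi_1 \theta = \theta' \phi_2$ shows $(p_1 K_1)^{\theta} = p_2 K_2$ with $p_2 \in P_2'$, so the image lands inside $P_2' K_2/K_2$; the transposed relation gives the reverse inclusion, so $\tilde\theta$ is an isomorphism onto $P_2' K_2/K_2$ and $L_{\tilde\theta} = L \cap (P_1' K_1 \times P_2' K_2)$. For $\hat\theta'$, the same two relations applied to elements of $P_i' \cap K_i$, which $\phi_i$ sends to the identity coset, show that $\theta'$ restricts to an isomorphism $(P_1' \cap K_1)/K_1' \to (P_2' \cap K_2)/K_2'$; the third isomorphism theorem then produces the induced isomorphism $\hat\theta' \colon P_1'/(P_1' \cap K_1) \to P_2'/(P_2' \cap K_2)$, whose graph is $L_{\hat\theta'}$. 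This restriction statement is exactly what makes $\hat\theta'$ well defined, so I would establish it before asserting that $\hat\theta'$ is a function at all.

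For the chain in part (ii) I would check each relation directly against Definition~\ref{def:subpos-grp}. The relation $L' \leq_P L_{\hat\theta'}$ holds because the graph of $\theta'$ sits inside that of $\hat\theta'$ by construction and both have top groups $P_i'$; dually $L_{\tilde\theta} \leq_K L$ since $\tilde\theta$ is a restriction of $\theta$ and both have bottom groups $K_i$. For $L_{\hat\theta'} \leq_{P/K} L_{\tilde\theta}$ a short computation with $\phi_1 \theta = \theta' \phi_2$ shows that every element of $L_{\hat\theta'}$ lies in $L \cap (P_1' K_1 \times P_2' K_2) = L_{\tilde\theta}$, while the canonical maps $P_i'/(P_i' \cap K_i) \to P_i' K_i/K_i$ are isomorphisms by the second isomorphism theorem; together these are precisely the $\leq_{P/K}$ requirement.

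Uniqueness is then forced by the definitions. Given any $L' \leq_P M' \leq_{P/K} M \leq_K L$, the order $\leq_P$ fixes the top groups of $M'$ as $P_i'$ and $\leq_K$ fixes the bottom groups of $M$ as $K_i$; the isomorphism requirement built into $\leq_{P/K}$ then forces the bottom groups of $M'$ to be $P_i' \cap K_i$ and the top groups of $M$ to be $P_i' K_i$. With the sections determined, $M \leq_K L$ forces the Goursat isomorphism of $M$ to agree with $\theta$ on $P_1' K_1/K_1$, so $M = L_{\tilde\theta}$, while $L' \leq_P M'$ forces that of $M'$ to match the defining recipe for $\hat\theta'$ on every coset, so $M' = L_{\hat\theta'}$. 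I expect the genuine obstacle to be the two image computations in part (i) --- that $\theta$ carries $P_1' K_1/K_1$ exactly onto $P_2' K_2/K_2$ and that $\theta'$ carries $(P_1' \cap K_1)/K_1'$ exactly onto $(P_2' \cap K_2)/K_2'$ --- since these surjectivity statements are where the compatibility $\phi_1 \theta = \theta' \phi_2$ and its transpose are indispensable; the order relations and the uniqueness are then a formal unwinding of Definition~\ref{def:subpos-grp} and the isomorphism theorems.
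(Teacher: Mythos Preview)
Your argument is correct and follows essentially the same route as the paper: use Corollary~\ref{cor:L'<=L} to obtain $\phi_1 \theta = \theta' \phi_2$, then observe that this forces $\theta$ to carry $\im\phi_1 = P_1'K_1/K_1$ onto $\im\phi_2 = P_2'K_2/K_2$ and $\theta'$ to carry $\ker\phi_1$ onto $\ker\phi_2$, yielding $\tilde\theta$ and $\hat\theta'$ together with the commutative diagram that encodes the chain and its uniqueness. One small remark: your ``transposed'' relation $\phi_2\theta^{-1} = (\theta')^{-1}\phi_1$ need not be derived via $(L')^{\mathrm{op}} \leq L^{\mathrm{op}}$, since it is an immediate algebraic rearrangement of $\phi_1\theta = \theta'\phi_2$ using that $\theta$ and $\theta'$ are invertible.
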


\begin{proof}
  Denote by $\phi_i \colon P_i'/K_i' \to P_i/K_i$ the canonical homomorphism,
  $i = 1,2$.  Then, as in the proof of Theorem~\ref{secdecom}, $\phi_i$ is
  the product of an epimorphism
  $\phi_{i1} \colon P_i'/K_i' \to (P_i'/K_i')/\ker \phi_i$, an isomorphism
  $\phi_{i2} \colon (P_i'/K_i')/\ker \phi_i \to \im \phi_i$, and a
  monomorphism $\phi_{i3} \colon \im \phi_i \to P_i/K_i$.  By
  Corollary~\ref{cor:L'<=L}, $\phi_1 \theta = \theta' \phi_2$.  It follows
  that $(\im \phi_1)^\theta = \im \phi_2$ and
  $(\ker \phi_1)^{\theta'} = \ker \phi_2$.  Thus $\theta$ restricts to an
  isomorphism $\tilde{\theta}$ from $\im \phi_1$ to $\im \phi_2$, and
  $\theta'$ induces an isomorphism $\hat{\theta}'$ from
  $(P_1'/K_1')/\ker \phi_1$ to $(P_2'/K_2')/\ker \phi_2$, and the following
  diagram commutes.

\begin{figure}[ht]
\begin{center}
\begin{tikzpicture}[>=latex]
\node (P1) at (-4,3) {$P_1/K_1$};
\node (P2) at (4,3) {$P_2/K_2$};
\node (K1) at (-2,-1) {$(P'_1/K'_1)/\ker \phi_1$};
\node (K2) at (2,-1) {$(P'_2/K'_2)/\ker \phi_2$};
\node (P1') at (-4,-3) {$P'_1/K'_1$};
\node (P2') at (4,-3) {$P'_2/K'_2$};
\node (PK1) at (-2,1) {$\im \phi_1$};
\node (PK2) at (2,1) {$\im \phi_2$};

\draw[->] (P1') -- node[left] {${\phi_1}$} (P1);
\draw[->] (P2') -- node[right] {${\phi_2}$} (P2);

\draw[->] (P1) -- node[above] {${\theta}$} (P2);
\draw[->] (P1') -- node[above] {${\theta'}$} (P2');
\draw[->] (K1) -- node[above] {${\hat{\theta}'}$} (K2);
\draw[->] (PK1) -- node[above] {${\tilde{\theta}}$} (PK2);

\draw[->] (P1') -- node[left] {${\phi_{11}}$} (K1);
\draw[->] (P2') -- node[right] {${\phi_{21}}$} (K2);
\draw[->] (K1) -- node[left] {${\phi_{12}}$} (PK1);
\draw[->] (K2) -- node[right] {${\phi_{22}}$} (PK2);
\draw[->] (PK1) -- node[left] {${\phi_{13}}$} (P1);
\draw[->] (PK2) -- node[right] {${\phi_{23}}$} (P2);
\end{tikzpicture}
\end{center}
\caption{$\theta' \leq \hat{\theta}' \leq \tilde{\theta} \leq \theta$}\label{fig:sub-decom}
\end{figure}
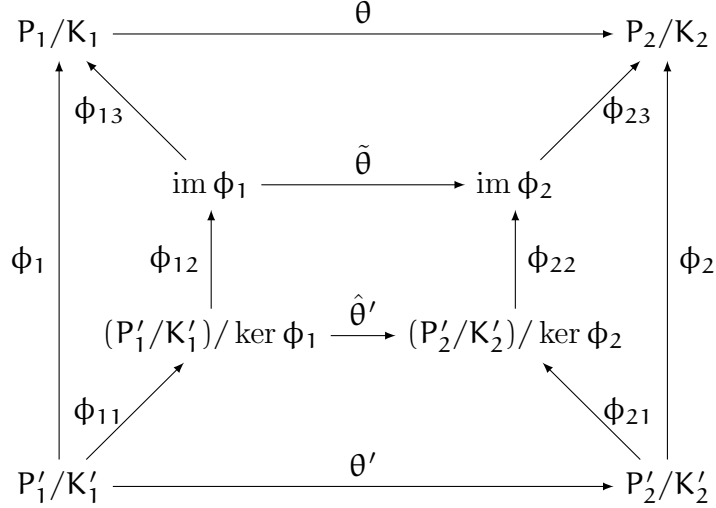
 By Proposition
  \ref{secdecom}, $\im \phi_i = P'_i K_i/K_i$ and
  $(P_i'/K_i')/\ker \phi_i \cong P'_i/(P'_i \cap K_i)$.
\end{proof}

\begin{cor} \label{cor:decompose-subgroups}
  The partial order $\leq$ on
  $\Sub{G_1 \times G_2}$ is a product of three relations:
  \begin{align*}
    {\leq} = {\leq}_{K} \circ {\leq}_{P/K} \circ {\leq}_{P}\text.
  \end{align*}
  Moreover, if $A(R)$ denotes the incidence matrix of the relation $R$, the
  stronger property
\begin{align*}
  A(\leq) = A(\leq_{K}) \cdot A(\leq_{P/K}) \cdot A(\leq_{P})
\end{align*}
also holds.
\end{cor}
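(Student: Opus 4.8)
The plan is to establish the relation product decomposition first and then upgrade it to the matrix identity, mirroring the proof strategy used for sections in Corollary~\ref{ince}. The essential content is already contained in Theorem~\ref{thm:intermediate-subgroups}: for any pair $L' \leq L$ of subgroups of $G_1 \times G_2$, that theorem produces intermediary subgroups $L_{\hat{\theta}'}$ and $L_{\tilde{\theta}}$ satisfying $L' \leq_P L_{\hat{\theta}'} \leq_{P/K} L_{\tilde{\theta}} \leq_K L$. This immediately gives the inclusion ${\leq} \subseteq {\leq_K} \circ {\leq_{P/K}} \circ {\leq_P}$ as relations. For the reverse inclusion, I would observe that each of $\leq_P$, $\leq_{P/K}$, $\leq_K$ is by Definition~\ref{def:subpos-grp} a sub-relation of $\leq$, so any composite chain $L' \leq_P L'' \leq_{P/K} L''' \leq_K L$ forces $L' \leq L$ by transitivity of $\leq$. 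Together these prove the relation product identity ${\leq} = {\leq_K} \circ {\leq_{P/K}} \circ {\leq_P}$.

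The harder part is the matrix identity $A(\leq) = A(\leq_K) \cdot A(\leq_{P/K}) \cdot A(\leq_P)$, which is strictly stronger than the relation product. Writing out the $(L, L')$-entry of the right-hand side, it equals
\begin{align*}
  \sum_{L'', L'''} a_{L, L'''}(\leq_K)\, a_{L''', L''}(\leq_{P/K})\, a_{L'', L'}(\leq_P)
  = \#\{(L'', L''') : L' \leq_P L'' \leq_{P/K} L''' \leq_K L\}\text,
\end{align*}
so the claim reduces to showing that, whenever $L' \leq L$, the pair of intermediaries realizing the chain is \emph{unique}, and that no chain exists when $L' \not\leq L$. The non-existence when $L' \not\leq L$ is handled by the relation product identity just established. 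Uniqueness is exactly part~(ii) of Theorem~\ref{thm:intermediate-subgroups}, which asserts that $L_{\hat{\theta}'}$ and $L_{\tilde{\theta}}$ are the \emph{unique} subgroups fitting into the chain. Thus each entry of the right-hand product matrix is either $1$ (when $L' \leq L$) or $0$ (otherwise), matching $A(\leq)$ exactly.

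The main obstacle I anticipate is not in the logical structure, which is clean, but in being careful that Theorem~\ref{thm:intermediate-subgroups} genuinely delivers uniqueness of the intermediaries and not merely their existence. I would want to verify that the decomposition of each canonical homomorphism $\phi_i$ into epimorphism, isomorphism, monomorphism is canonically determined by the pair $(L', L)$ — this is the group-theoretic content driving uniqueness, and it descends from the corresponding uniqueness statement for sections in Theorem~\ref{secdecom} applied componentwise to $i = 1, 2$. Since Definition~\ref{def:subpos-grp} defines the three partial orders on subgroups componentwise in terms of the section orders, and the section-level matrix identity of Corollary~\ref{ince} already encodes precisely this uniqueness, the subgroup-level statement follows by transferring that argument through the Goursat correspondence. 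Consequently the proof can be kept very short, citing Theorem~\ref{thm:intermediate-subgroups} for both the existence and uniqueness of the intermediate subgroups and noting that these two facts together are exactly what the matrix identity asserts.
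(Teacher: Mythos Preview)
Your proposal is correct and takes essentially the same approach as the paper: the paper's proof simply cites the uniqueness of the intermediate subgroups in Theorem~\ref{thm:intermediate-subgroups} and notes the analogy with Corollary~\ref{ince}, which is exactly the structure you outline (with considerably more detail than the paper provides).
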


\begin{proof}
  Like Corollary~\ref{ince}, this follows from the uniqueness of the
  intermediate subgroups in Theorem~\ref{thm:intermediate-subgroups}.
\end{proof}


\begin{lem} \label{la:corr}
Let $(\theta_i: P_i / K_i \rightarrow U) \in \Mor{G_i}(U)$, $i = 1, 2$ and $L = \Pi (\theta_1, \theta_2)$. Then
\begin{enumerate}
\item the set $\{L' \leq G_1 \times G_2 : L' \leq_K L \}$
  is in an order preserving bijective correspondence with the subgroups of $U$;
\item the set $\{L' \leq G_1 \times G_2 : L \leq_P L' \}$
  is in an order preserving bijective correspondence with the quotients of $U$;
\item the set $\{L' : L' \leq_{P/K} L \}$ is in an order
  preserving bijective correspondence with
  $\{ (P_1', K_1') : (P_1', K_1') \leq_{P/K} (P_1, K_1) \} \times \{ (P_2', K_2') : (P_2', K_2') \leq_{P/K}
  (P_2, K_2) \}$;
\item the set $\{L' : L \leq_{P/K} L' \}$
  is in an order preserving bijective correspondence with
 $\{ (P_1', K_1') : (P_1', K_1') \geq_{P/K} (P_1, K_1) \}
\times \{ (P_2', K_2') : (P_2', K_2') \geq_{P/K} (P_2, K_2) \}$.
\end{enumerate}
\end{lem}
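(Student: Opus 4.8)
The plan is to reduce each of the four parts to a correspondence already established, using in every case the fixed Goursat isomorphism $\theta = \Pi(\theta_1,\theta_2) = \theta_1\theta_2^{-1} \colon P_1/K_1 \to P_2/K_2$ of $L$ as a pivot. The engine of the whole argument is Corollary~\ref{cor:L'<=L}: for a subgroup $L'$ with Goursat isomorphism $\theta' \colon P_1'/K_1' \to P_2'/K_2'$ and canonical homomorphisms $\phi_i \colon P_i'/K_i' \to P_i/K_i$, one has $L' \leq L$ if and only if $(P_i',K_i') \leq (P_i,K_i)$ for $i=1,2$ and $\phi_1\theta = \theta'\phi_2$. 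This last identity shows that, once the Goursat sections of $L'$ are prescribed, the isomorphism $\theta'$, and hence $L'$ itself, is forced by $\theta$; this is what turns each of the four sets of subgroups into a set of subgroups, quotients, or pairs of sections.

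For (i), the relation $L' \leq_K L$ forces $K_i' = K_i$, so each $\phi_i$ is the inclusion $P_i'/K_i \hookrightarrow P_i/K_i$ and $\theta'$ is just the restriction of $\theta$ to $P_1'/K_1$. Thus $L'$ is determined by the subgroup $P_1'/K_1 \leq P_1/K_1$, every subgroup $S \leq P_1/K_1$ arises as the graph of the restriction of $\theta$ to $S$, and $L' \leq L''$ if and only if $P_1' \leq P_1''$ since the second components are pinned down by $\theta$. Composing $L' \mapsto P_1'/K_1$ with the order isomorphism of Lemma~\ref{la:mor-corr}(i) induced by $\theta_1$ yields the asserted order-preserving bijection with the subgroups of $U$. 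Part (ii) is dual: $L \leq_P L'$ forces $P_i' = P_i$ with $K_i \leq K_i'$, each $\phi_i$ is the quotient map $P_i/K_i \to P_i/K_i'$, and $\theta'$ is the isomorphism induced by $\theta$, so $L'$ is determined by the normal subgroup $K_1'/K_1 \unlhd P_1/K_1$ (with $K_2'/K_2 = (K_1'/K_1)^{\theta}$ forced). Lemma~\ref{la:mor-corr}(ii) via $\theta_1$ then identifies these normal subgroups, i.e. the quotients, of $U$ order-preservingly.

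For (iii) and (iv) I would send $L'$ to the pair of its Goursat sections $\big((P_1',K_1'),(P_2',K_2')\big)$. By Definition~\ref{def:subpos-grp}(iii), $L' \leq_{P/K} L$ means precisely $(P_i',K_i') \leq_{P/K} (P_i,K_i)$ for both $i$, and dually $L \leq_{P/K} L'$ means $(P_i',K_i') \geq_{P/K} (P_i,K_i)$, so the map lands in the stated products. Since the $\phi_i$ are now isomorphisms, the compatibility $\phi_1\theta = \theta'\phi_2$ of Corollary~\ref{cor:L'<=L} (respectively $\phi_1\theta' = \theta\phi_2$ in (iv)) solves uniquely for $\theta' = \phi_1\theta\phi_2^{-1}$ (respectively $\theta' = \phi_1^{-1}\theta\phi_2$); this $\theta'$ is an isomorphism whose graph realizes any prescribed admissible pair of sections, which gives both injectivity and surjectivity.

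The main obstacle I anticipate lies in the order-preservation for (iii) and (iv). One direction is easy and general: if $L' \leq L''$ then projections and kernels are monotone, so the Goursat sections are nested componentwise. The converse, that componentwise inclusion of the Goursat sections already forces $L' \leq L''$ with no residual constraint tying $\theta'$ to $\theta''$, is the delicate point. I would resolve it by noting that the relevant canonical maps compose, $\psi_i = \phi_i\chi_i^{-1}$, where $\phi_i \colon P_i'/K_i' \to P_i/K_i$ and $\chi_i \colon P_i''/K_i'' \to P_i/K_i$ are the canonical isomorphisms of $L'$ and $L''$ and $\psi_i \colon P_i'/K_i' \to P_i''/K_i''$ is the canonical map between them; substituting $\theta' = \phi_1\theta\phi_2^{-1}$ and $\theta'' = \chi_1\theta\chi_2^{-1}$ into the condition $\psi_1\theta'' = \theta'\psi_2$ of Corollary~\ref{cor:L'<=L} then makes it an identity. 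That all these canonical maps are isomorphisms, legitimate because every subgroup in sight has Goursat type $U$, so all its section quotients have order $|U|$ by Lemma~\ref{la:size-subgroup}, is exactly what licenses this substitution and removes the apparent extra condition.
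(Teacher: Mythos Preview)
Your proof is correct and follows essentially the same route as the paper, which simply cites Lemma~\ref{la:mor-corr}, Proposition~\ref{prop:comparing-morphisms}, and Theorem~\ref{thm:intermediate-subgroups} without spelling out the details. You have unpacked those details explicitly, using Corollary~\ref{cor:L'<=L} as your workhorse rather than invoking Theorem~\ref{thm:intermediate-subgroups}; this is a cosmetic difference, since the content of that theorem (that $\theta'$ is forced by $\theta$ once the Goursat sections are prescribed) is exactly what you extract from the commutation $\phi_1\theta = \theta'\phi_2$. Your careful verification of order-preservation in (iii) and (iv) via the factorisation $\phi_i = \psi_i\chi_i$ is precisely the kind of check the paper leaves to the reader.
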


\begin{proof}
  This follows from Lemma~\ref{la:mor-corr} on the correspondences induced by
  a $U$-morphism, together with Proposition~\ref{prop:comparing-morphisms}
and Theorem~\ref{thm:intermediate-subgroups}.
\end{proof}

\subsection{Classes of Subgroups}
\label{sec:classes-subgroups}

The conjugacy classes of $U$-subgroups of $G_1 \times G_2$ can be described
as $\Aut(U)$-orbits of pairs of classes of $U$-morphisms.

\begin{thm} \label{thm:cc-subgroups}
  Let $U \sqsubseteq G_i$, $i = 1,2$.
  \begin{enumerate}
  \item[(i)] $\Sub{G_1 \times G_2}(U)/(G_1 \times G_2)$
    is the disjoint union of sets
    \begin{align*}
      \Mor{G_1}^{P_1,K_1}(U)/G_1 \times_{\Aut(U)} (\Mor{G_2}^{P_2,K_2}(U)/G_2)^{\op},
    \end{align*}
    one for each pair of section classes $[P_i, K_i]_{G_i} \in \Sec{G_i}(U)/G_i$.
  \item[(ii)] Let $\theta_i \colon P_i/K_i \to U$ be $U$-morphisms of $G_i$, $i = 1,2$.  Then
    \begin{align*}
 \Mor{G_1}^{P_1,K_1}(U)/G_1 \times_{\Aut(U)} (\Mor{G_2}^{P_2,K_2}(U)/G_2)^{\op} = \{[\theta_1 d \theta_2^{-1}]_{G_1 \times G_2}: d \in D_{\theta_1,\theta_2}\}\text,
    \end{align*}
    where $D_{\theta_1,\theta_2}$ is a transversal of the
    $(A_{\theta_1},A_{\theta_2})$-double cosets in $\Aut(U)$.
  \end{enumerate}
\end{thm}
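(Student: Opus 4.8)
The plan is to read off the $(G_1\times G_2)$-orbits of the biset product $\Sub{G_1\times G_2}(U)=\Mor{G_1}(U)\times_{\Aut(U)}\Mor{G_2}(U)^{\op}$ established in the preceding proposition by pushing the orbit-formation inside the tensor product, and then to invoke Proposition~\ref{actonmorph} on each of the two factors.

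For part~(i), I would first write the tensor product as the quotient of $\Mor{G_1}(U)\times\Mor{G_2}(U)$ by the diagonal $\Aut(U)$-action of Definition~\ref{def:biset_comp}, and observe that the left $G_1$-action on the first factor, the $G_2$-action on the second factor, and the gluing $\Aut(U)$-action commute pairwise. Hence the three quotients may be formed in any order, and
\[
\Sub{G_1\times G_2}(U)/(G_1\times G_2)=(\Mor{G_1}(U)/G_1)\times_{\Aut(U)}(\Mor{G_2}(U)/G_2)^{\op}.
\]
By Proposition~\ref{actonmorph}(i), each factor $\Mor{G_i}(U)/G_i$ decomposes as an $\Aut(U)$-set into the disjoint union of the transitive pieces $\Mor{G_i}^{P_i,K_i}(U)/G_i$, one for every section class $[P_i,K_i]_{G_i}\in\Sec{G_i}(U)/G_i$. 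Since the tensor product distributes over disjoint unions, this produces exactly the asserted decomposition of $\Sub{G_1\times G_2}(U)/(G_1\times G_2)$ indexed by pairs of section classes.

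For part~(ii), I would fix the $U$-morphisms $\theta_i\colon P_i/K_i\to U$ and identify each transitive factor with a coset space. As in the proof of Proposition~\ref{actonmorph}(ii), the stabilizer in $\Aut(U)$ of $[\theta_i]_{G_i}$ is the automizer $A_{\theta_i}$, so $\alpha\mapsto[\theta_i\alpha]_{G_i}$ gives an $\Aut(U)$-equivariant bijection $A_{\theta_i}\backslash\Aut(U)\to\Mor{G_i}^{P_i,K_i}(U)/G_i$. Under these identifications the diagonal $\Aut(U)$-action defining the tensor product becomes the diagonal right multiplication $(A_{\theta_1}\alpha,A_{\theta_2}\beta).h=(A_{\theta_1}\alpha h,A_{\theta_2}\beta h)$ on $A_{\theta_1}\backslash\Aut(U)\times A_{\theta_2}\backslash\Aut(U)$. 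Normalizing the second coordinate to $A_{\theta_2}$ shows that every orbit has a representative of the form $(A_{\theta_1}d,A_{\theta_2})$, and two such representatives agree exactly when the elements $d$ lie in the same $(A_{\theta_1},A_{\theta_2})$-double coset; thus the orbits are parametrized by a transversal $D_{\theta_1,\theta_2}$ of these double cosets. Transporting back along $\Pi$, the orbit of $([\theta_1 d]_{G_1},[\theta_2]_{G_2})$ corresponds to the subgroup class $[\theta_1 d\,\theta_2^{-1}]_{G_1\times G_2}$, which yields the stated formula.

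I expect the main point requiring care to be the bookkeeping around the opposite biset: I must verify that the left $\Aut(U)$-action on $\Mor{G_2}(U)^{\op}$ turns the gluing action $(x,y).h=(x.h,h^{-1}.y)$ of Definition~\ref{def:biset_comp} into the diagonal right action $(\theta_1,\theta_2).h=(\theta_1 h,\theta_2 h)$, so that it is compatible with $\Pi(\theta_1,\theta_2)=\theta_1\theta_2^{-1}$ and so that the orbit count genuinely produces $(A_{\theta_1},A_{\theta_2})$-double cosets rather than ordinary cosets. Once this is pinned down, the commuting-quotients argument of part~(i) and the elementary double-coset orbit count of part~(ii) are routine.
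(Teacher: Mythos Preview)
Your proposal is correct and follows essentially the same route as the paper: both first pass to $(\Mor{G_1}(U)/G_1)\times_{\Aut(U)}(\Mor{G_2}(U)/G_2)^{\op}$ using that the $G_1$-, $G_2$-, and $\Aut(U)$-actions commute, then invoke Proposition~\ref{actonmorph} to split into pieces indexed by pairs of section classes, and finally identify the $\Aut(U)$-orbits on $A_{\theta_1}\backslash\Aut(U)\times A_{\theta_2}\backslash\Aut(U)$ with $(A_{\theta_1},A_{\theta_2})$-double cosets. The paper phrases part~(ii) by tracking when $\Pi([\theta_1\alpha_1]_{G_1},[\theta_2\alpha_2]_{G_2})=\Pi([\theta_1\alpha_1']_{G_1},[\theta_2\alpha_2']_{G_2})$ and reducing to $A_{\theta_1}\alpha_1\alpha_2^{-1}A_{\theta_2}=A_{\theta_1}\alpha_1'(\alpha_2')^{-1}A_{\theta_2}$, which is exactly your normalization argument in different clothing; your explicit remark about the opposite biset and the resulting diagonal right action is a point the paper leaves implicit.
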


\begin{proof}
  (i) As
  $\Sub{G_1 \times G_2}(U) = \Mor{G_1}(U) \times_{\Aut(U)} \Mor{G_2}(U)$, the
  $(G_1 \times G_2)$-conjugacy classes of $U$-subgroups of $G_1 \times G_2$
  are $\Aut(U)$-orbits on the direct product
  $\Mor{G_1}(U)/G_1 \times \Mor{G_2}(U)/G_2$.  By
  Proposition~\ref{actonmorph}(i), this direct product is the disjoint union
  of $\Aut(U)$-invariant direct products
  $\Mor{G_1}^{P_1,K_1}(U)/G_1 \times \Mor{G_2}^{P_2,K_2}(U)/G_2$, one for
  each choice of $G_i$-classes of sections
  $[P_i, K_i]_{G_i} \in \Sec{G_i}(U)/G_i$, $i = 1,2$.

  (ii) Let $\alpha_i \in \Aut(U)$, $i = 1,2$.  Note first that the image of
  $[\theta_1 \alpha_1]_{G_1} \times [\theta_2 \alpha_2]_{G_2}$ under $\Pi$ is
  a $(G_1 \times G_2)$-conjugacy class of $U$-subgroups and that each
  $(G_1 \times G_2)$-class is of this form.  We show that the classes in
  $\Mor{G_1}^{P_1,K_1}(U)/G_1 \times \Mor{G_2}^{P_2,K_2}(U)/G_2$ correspond
  to the $(A_{\theta_1}, A_{\theta_2})$-double cosets in $\Aut(U)$.  For
  this, let $\alpha_i' \in \Aut(U)$, $i = 1,2$, and assume that
  $\Pi([\theta_1 \alpha_1]_{G_1}, [\theta_2 \alpha_2]_{G_2}) = \Pi([\theta_1
  \alpha_1']_{G_1}, [\theta_2 \alpha_2']_{G_2})$.
  By Proposition~\ref{actonmorph}(ii), this is the case if and only if
  $\theta_1 A_{\theta_1} \alpha_1 \alpha_2^{-1} A_{\theta_2} \theta_2^{-1} =
  \theta_1 A_{\theta_1} \alpha_1' (\alpha_2')^{-1} A_{\theta_2}
  \theta_2^{-1}$,
  i.e., if $\alpha_1 \alpha_2^{-1}$ and $\alpha_1' (\alpha_2')^{-1}$ lie in
  the same $(A_{\theta_1}, A_{\theta_2})$-double coset.
\end{proof}

\begin{ex}\label{ex:s3-ccs}
  Let $G = S_3$.  For each $U$-morphism $\theta \colon P/K \to U$, we have
  $O_{\theta} = \Out(U)$.  Therefore, by Theorem~\ref{thm:cc-subgroups},
  there exists exactly one conjugacy class of subgroups for each pair of
  classes of isomorphic sections $(P_1, K_1)$, $(P_2, K_2)$.  A transversal
  $\{L_1, \dots, L_{22}\}$ of the $22$ conjugacy classes of subgroups of
  $G \times G$ can be labelled by pairs of sections as follows.
\begin{align*}
  \begin{array}{r|cccc}
    & (1,1) & (2,2) & (3,3) & (G,G) \\ \hline
(1,1) & L_1&L_2&L_3&L_4 \\
(2,2) & L_5&L_6&L_7&L_8 \\
(3,3) & L_9&L_{10}&L_{11}&L_{12} \\
(G,G) & L_{13}&L_{14}&L_{15}&L_{16}
  \end{array}
&&
   \begin{array}{r|cc|c|c}
 & (2,1) & (G,3) & (3,1) & (G,1) \\ \hline
(2,1) & L_{17} & L_{18} &&\\
(G,3) & L_{19} & L_{20} &&\\ \hline
(3,1) &       &       & L_{21} & \\ \hline
(G,1) & &  &  &  L_{22}
   \end{array}
\end{align*}
Here, a subgroup $L_i$ in row  $(P_1,K_1)$ and column $(P_2,K_2)$
has a Goursat isomorphism of the form $P_1/K_1 \to P_2/K_2$.
\end{ex}

The normalizer of a subgroup $\theta = \Pi(\theta_1, \theta_2)$ of
$G_1 \times G_2$, described as a quotient of two $U$-morphisms $\theta_i$,
can be described as the quotient of the automizers of the two $U$-morphisms.

\begin{thm}
  Let $U \sqsubseteq G_i$ and let
   $\theta_i \in \Mor{G_i}(U)$, for $i = 1,2$. Then
  \begin{align*}
    N_{G_1 \times G_2} (\Pi(\theta_1, \theta_2))
    = A_{G_1}(\theta_1) \ast A_{G_2}(\theta_2)^{\op}
  \end{align*}
\end{thm}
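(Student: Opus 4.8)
The plan is to show that the two sides of the claimed identity, regarded as subsets of $G_1 \times G_2$, consist of exactly the same pairs $(g_1, g_2)$: namely those for which $g_i \in N_{G_i}(P_i, K_i)$ for $i = 1,2$ and the two automizers assign to $g_1$ and $g_2$ the \emph{same} automorphism of $U$. So I would compute a membership condition for each side separately and observe that they agree.

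First I would analyse the left-hand side. Writing $L = \Pi(\theta_1, \theta_2)$ as the graph $\{(p_1, p_2) \in P_1 \times P_2 : (p_1 K_1)^{\theta_1} = (p_2 K_2)^{\theta_2}\}$, the Goursat sections $(P_i, K_i) = (p_i(L), k_i(L))$ are invariants of $L$, and projection commutes with conjugation; hence $L^{(g_1, g_2)} = L$ forces $P_i^{g_i} = P_i$ and $K_i^{g_i} = K_i$, that is $g_i \in N_{G_i}(P_i) \cap N_{G_i}(K_i) = N_{G_i}(P_i, K_i)$ by Lemma~\ref{la:norm_cent}(i). Under this condition each $g_i$ induces the conjugation automorphism $\gamma_{g_i}$ of $P_i/K_i$, and a direct computation on the graph gives the equivariance identity $L^{(g_1, g_2)} = \Pi(\theta_1^{g_1}, \theta_2^{g_2})$. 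Feeding this into the criterion for equality of $\Pi$-images from Section~\ref{sec:pairs-morphisms}, together with $\theta_i^{g_i} = \gamma_{g_i}^{-1}\theta_i$, I obtain that $L^{(g_1, g_2)} = L$ if and only if $\theta_1^{-1}\gamma_{g_1}\theta_1 = \theta_2^{-1}\gamma_{g_2}\theta_2$ in $\Aut(U)$.

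Next I would compute the right-hand side from the definition of the subgroup product $\ast$. By the definition of the automizer of a $U$-morphism (Section~\ref{sec:biset-u-morphisms}), $A_{G_i}(\theta_i)$ is the graph in $G_i \times \Aut(U)$ of the isomorphism $N_{G_i}(P_i, K_i)/C_{G_i}(P_i, K_i) \to A_{\theta_i}$ sending $n C_{G_i}(P_i, K_i) \mapsto \theta_i^{-1}\gamma_n\theta_i$; explicitly, $A_{G_i}(\theta_i) = \{(n, \theta_i^{-1}\gamma_n\theta_i) : n \in N_{G_i}(P_i, K_i)\}$. Thus $(g_1, g_2)$ lies in $A_{G_1}(\theta_1) \ast A_{G_2}(\theta_2)^{\op}$ precisely when there is some $\alpha \in \Aut(U)$ with $(g_1, \alpha) \in A_{G_1}(\theta_1)$ and $(\alpha, g_2) \in A_{G_2}(\theta_2)^{\op}$, which by the explicit graphs means $g_i \in N_{G_i}(P_i, K_i)$ and $\alpha = \theta_1^{-1}\gamma_{g_1}\theta_1 = \theta_2^{-1}\gamma_{g_2}\theta_2$. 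This is identical to the condition obtained for the left-hand side, so the two subgroups coincide.

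I expect the main obstacle to be the bookkeeping with the composition and conjugation conventions rather than any conceptual difficulty. Concretely, verifying the equivariance identity $L^{(g_1, g_2)} = \Pi(\theta_1^{g_1}, \theta_2^{g_2})$ and then translating the abstract $\Pi$-equality criterion into the concrete automorphism equality $\theta_1^{-1}\gamma_{g_1}\theta_1 = \theta_2^{-1}\gamma_{g_2}\theta_2$ requires care with the right-hand action convention and with whether $\gamma_{g_i}$ or $\gamma_{g_i}^{-1}$ appears at each step. Once both sides have been reduced to the same membership condition, the asserted equality of subgroups is immediate.
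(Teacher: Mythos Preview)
Your proposal is correct and follows essentially the same strategy as the paper: reduce membership on each side to the condition that $g_i \in N_{G_i}(P_i,K_i)$ together with the equality $\theta_1^{-1}\gamma_{g_1}\theta_1 = \theta_2^{-1}\gamma_{g_2}\theta_2$ in $\Aut(U)$, and then observe that the two conditions coincide. The only noteworthy difference is on the right-hand side: the paper invokes Lemma~\ref{lm:star_butter} to identify the Goursat isomorphism of the $\ast$-product via the Butterfly meet $A_{\theta_1} \cap A_{\theta_2}$, whereas you simply unpack the definition of $L \ast M$ as pairs linked through a common middle element of $\Aut(U)$; your route is slightly more direct and sidesteps the butterfly machinery, at no cost.
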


\begin{proof}
  For $i = 1, 2$, suppose that $\theta_i \colon P_i/K_i \to U$ and let
  $(\tilde{P}_i, \tilde{K}_i) = \Atm{G_i}(P_i, K_i)$.  Then
  $\Atm{G_i}(\theta_i) \colon \tilde{P}_i/ \tilde{K}_i \to A_{\theta_i} \leq
  \Aut(U)$
  is the automizer of $\theta_i$.  Let
  $\theta = \Pi(\theta_1, \theta_2) = \theta_1 \theta_2^{-1}$.  Then, on the
  one hand,
  \begin{align*}
    N_{G_1 \times G_2} (\theta)
    = \{(a_1, a_2) \in G_1 \times G_2 :
    \gamma_{a_1}^{-1} \theta \gamma_{a_2} = \theta\}
  \end{align*}
  consists of those elements $(a_1, a_2) \in \tilde{P}_1 \times \tilde{P}_2$
  which induce automorphisms
  $\alpha_i = \theta_i^{-1} \gamma_{a_i} \theta_i
    = (a_i \tilde{K}_i)^{\Atm{G_i}(\theta_i)} \in \Aut(U)$
  such that
  $\theta_1 \alpha_1^{-1} \alpha_2 \theta_2^{-1} = \theta_1 \theta_2^{-1}$,
  i.e., $\alpha_1 = \alpha_2$.

  On the other hand, by the Lemma \ref{lm:star_butter},
  $\Atm{G_1}(\theta_1) \ast \Atm{G_2}(\theta_2)^{op} = \Pi(\tilde{\theta}_1'
  (\tilde{\theta}_2'))$ where, for $i = 1,2$,
\begin{align*}
  \tilde{\theta}_i' \colon \tilde{P}_i'/ \tilde{K}_i \to \tilde{U}
\end{align*}
is the restriction of the isomorphism $\Atm{G_i}(\theta_i)$
to the preimage $\tilde{P}_i'/ \tilde{K}_i$ of $\tilde{U} = A_{\theta_1} \cap A_{\theta_2}$ in $\tilde{P}_i/\tilde{K}_i$.  Hence, as a subgroup of $G_1 \times G_2$, the product
 $\Atm{G_1}(\theta_1) \ast \Atm{G_2}(\theta_2)^{op}$ consists of
those elements $(a_1, a_2) \in \tilde{P}_1' \times \tilde{P}_2'$
with
$(a_1 \tilde{K}_1)^{\Atm{G_1}(\theta_1)} = (a_2 \tilde{K}_2)^{\Atm{G_2}(\theta_2)}$.

It follows that $A_{G_1}(\theta_1) \ast A_{G_2}(\theta_2)^{\op} = N_{G_1 \times G_2}(\theta)$, as desired.
\end{proof}

As an immediate consequence, we can determine the normalizer index
of a subgroup of $G_1 \times G_2$ in terms of  $U$-morphisms.

\begin{cor}\label{cor:nor-idx}
  Let $L = \Pi(\theta_1, \theta_2) \leq G_1 \times G_2$, for $\theta_i \colon P_i/K_i \to U$, $i = 1,2$.  Then
  \begin{align*}
    |N_{G_1 \times G_2}(L) : L| =
      |C_{\overline{N}_1}(\overline{P}_1)|\,
      |C_{\overline{N}_2}(\overline{P}_2)|\,
      |O_{\theta_1} \cap O_{\theta_2}|\,
      |Z(U)|^{-1}\text,
  \end{align*}
  where $\overline{N}_i = N_{G_i}(K_i)/K_i$ and $\overline{P}_i = P_i/K_i$, $i = 1,2$.
\end{cor}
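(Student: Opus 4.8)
The plan is to read the normalizer index off directly from the identification
$N_{G_1 \times G_2}(L) = \Atm{G_1}(\theta_1) \ast \Atm{G_2}(\theta_2)^{\op}$ established in the preceding theorem, and then to compute the orders of $L$ and of its normalizer by means of Lemma~\ref{la:size-subgroup}. Write $N = \Atm{G_1}(\theta_1) \ast \Atm{G_2}(\theta_2)^{\op}$. From the proof of that theorem, $N$ is the graph of $\Pi(\tilde{\theta}_1', \tilde{\theta}_2')$, hence a $\tilde{U}$-subgroup of $G_1 \times G_2$ whose Goursat type is $\tilde{U} = A_{\theta_1} \cap A_{\theta_2}$ and whose bottom groups are $\tilde{K}_i = C_{G_i}(P_i, K_i)$, $i = 1,2$. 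By Lemma~\ref{la:size-subgroup} applied to $L$ and to $N$ respectively, $|L| = |K_1||K_2||U|$ while $|N| = |\tilde{K}_1||\tilde{K}_2||\tilde{U}|$, so that
\[
|N_{G_1 \times G_2}(L) : L| = \frac{|\tilde{K}_1|}{|K_1|} \cdot \frac{|\tilde{K}_2|}{|K_2|} \cdot \frac{|\tilde{U}|}{|U|}.
\]
It then remains only to identify the three factors on the right with those in the statement.

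For the first two factors I would invoke Definition~\ref{def:sec-autom}: there $\tilde{K}_i = C_{G_i}(P_i, K_i) = \phi^{-1}(C_{\overline{N}_i}(\overline{P}_i))$ contains $K_i = \ker \phi$ as a normal subgroup, whence $|\tilde{K}_i|/|K_i| = |\tilde{K}_i/K_i| = |C_{\overline{N}_i}(\overline{P}_i)|$. For the last factor, the point is that $\Inn(U) \leq A_{\theta_i}$ for $i = 1,2$ forces $\Inn(U) \leq \tilde{U} = A_{\theta_1} \cap A_{\theta_2}$, and that passing to the quotient by $\Inn(U)$ is compatible with intersection, so $\tilde{U}/\Inn(U) = O_{\theta_1} \cap O_{\theta_2}$ inside $\Out(U)$. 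Combined with $|\Inn(U)| = |U : Z(U)|$, this gives
\[
\frac{|\tilde{U}|}{|U|} = \frac{|\tilde{U}/\Inn(U)|\,|\Inn(U)|}{|U|} = |O_{\theta_1} \cap O_{\theta_2}| \cdot |Z(U)|^{-1},
\]
and multiplying the three factors yields the asserted formula.

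The order bookkeeping via Lemma~\ref{la:size-subgroup} is routine; the one step demanding a little care is the identity $\tilde{U}/\Inn(U) = O_{\theta_1} \cap O_{\theta_2}$, namely that quotienting by the common normal subgroup $\Inn(U)$ commutes with intersection. This is an application of the correspondence theorem, relying on $\Inn(U)$ being normal in $\Aut(U)$ and contained in both $A_{\theta_1}$ and $A_{\theta_2}$. I would also verify at the outset that the Goursat type of $N$ is genuinely $A_{\theta_1} \cap A_{\theta_2}$ rather than merely a subgroup of it; this is precisely what the surjective restriction $\tilde{\theta}_i' \colon \tilde{P}_i'/\tilde{K}_i \to \tilde{U}$ from the preceding proof supplies, together with the fact that the bottom groups of $N$ are the section centralizers $\tilde{K}_i$.
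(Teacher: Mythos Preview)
Your proof is correct and follows essentially the same approach as the paper's own proof: compute $|L|$ and $|N_{G_1\times G_2}(L)|$ via Lemma~\ref{la:size-subgroup}, using the description of the normalizer from the preceding theorem, and then identify the three factors exactly as you do. The paper's version is terser—it simply states $|\tilde{U}| = |\Inn(U)|\,|O_{\theta_1}\cap O_{\theta_2}|$ and $|U| = |\Inn(U)|\,|Z(U)|$ without spelling out the correspondence-theorem step—but your additional justification is sound and not a deviation in strategy.
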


\begin{proof}
  By Lemma~\ref{la:size-subgroup}, $|L| = |K_1|\,|K_2|\,|U|$.  With the
  notation from the preceding proof,
  $|N_{G_1 \times G_2}(L)| = |\tilde{K}_1|\,|\tilde{K}_2|\,|\tilde{U}|$. Thus
\begin{align*}
  |N_{G_1 \times G_2}(L) : L| =
    \frac{|\tilde{K}_1|\,|\tilde{K}_2|\,|\tilde{U}|}{|K_1|\,|K_2|\,|U|}\text.
\end{align*}
But $|\tilde{K}_i : K_i| = |C_{\overline{N}_i}(\overline{P}_i)|$, $i=1,2$, by Definition~\ref{def:sec-autom}.
Moreover, $|\tilde{U}| = |A_{\theta_1} \cap A_{\theta_2}| = |\Inn(U)|\, |O_{\theta_1} \cap O_{\theta_2}|$ and
$|U| = |\Inn(U)|\,|Z(U)|$.
\end{proof}


\section{Table of Marks}
\label{sec:tableofmarks}

We are now in a position to assemble the table of marks of $G_1 \times G_2$
from a collection of smaller class incidence matrices.

\begin{thm} \label{tm:tom}
Let $G_1$ and $G_2$ be finite groups. Then the table of marks of $G_1 \times G_2$ is
\[M (G_1 \times G_2) = D \cdot \CIM(\leq_K) \cdot \CIM (\leq_{P/K}) \cdot \CIM (\leq_P),\]
where $D$ is the diagonal matrix with entries $|N_{G_1 \times G_2}(L) : L|$,
for $L$ running over a transversal of the conjugacy classes of subgroups of
$G_1 \times G_2$.
\end{thm}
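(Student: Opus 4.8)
The plan is to reduce the theorem to the general factorization $M(G) = D \cdot \CIM(\leq)$ that was established for an arbitrary finite group at the end of Section~\ref{sec:preliminaries}, applied to $G = G_1 \times G_2$, and then to substitute the three-fold decomposition of $\CIM(\leq)$ on the subgroup lattice of $G_1 \times G_2$ that has just been developed in this section. Concretely, I would first invoke the identity $M(G_1 \times G_2) = D \cdot \CIM(\leq)$, where $D$ is the diagonal matrix of normalizer indices $|N_{G_1 \times G_2}(L) : L|$ and $\CIM(\leq)$ is the class incidence matrix of $G_1 \times G_2$ acting by conjugation on its own subgroup lattice. This is exactly the formula recorded in Section~\ref{sec:preliminaries}, and the diagonal entries are the ones named in the statement, so no extra work is needed to identify $D$.

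The substance of the proof is then the factorization $\CIM(\leq) = \CIM(\leq_K) \cdot \CIM(\leq_{P/K}) \cdot \CIM(\leq_P)$ for the subgroup lattice of $G_1 \times G_2$. I would obtain this in exact analogy with the proof of Theorem~\ref{cimsecdecom} for sections. First, Lemma~\ref{classin}(iii) gives $\CIM(\leq) = R \cdot A(\leq) \cdot C$, where $R = R(G_1 \times G_2)$ and $C = C(G_1 \times G_2)$ are the row-summing and column-picking matrices for the conjugation action on $\Sub{G_1 \times G_2}$. Next, Corollary~\ref{cor:decompose-subgroups} supplies the incidence-matrix factorization $A(\leq) = A(\leq_K) \cdot A(\leq_{P/K}) \cdot A(\leq_P)$. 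Finally I would push $R$ through the product from left to right using Lemma~\ref{classin}(ii), applied once to each of the three relations $\leq_K$, $\leq_{P/K}$, $\leq_P$, converting each $R \cdot A(\leq_?)$ into $\CIM(\leq_?) \cdot R$ and absorbing the trailing $R \cdot A(\leq_P) \cdot C$ into $\CIM(\leq_P)$ at the end, exactly as in the chain of equalities displayed in the proof of Theorem~\ref{cimsecdecom}.

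The one point requiring care, and the main obstacle, is the verification that Lemma~\ref{classin}(ii) actually applies to each of the three relations on $\Sub{G_1 \times G_2}$, i.e.\ that each of $\leq_K$, $\leq_{P/K}$, $\leq_P$ is a partial order compatible with the conjugation action of $G_1 \times G_2$, so that the class incidence matrices $\CIM(\leq_K)$, $\CIM(\leq_{P/K})$, $\CIM(\leq_P)$ are well defined. That these are partial orders is remarked immediately after Definition~\ref{def:subpos-grp}, and compatibility with conjugation follows because the defining conditions in Definition~\ref{def:subpos-grp} are stated in terms of the Goursat sections $(P_i, K_i)$ and the canonical homomorphisms $\phi_i$, all of which transform equivariantly under simultaneous conjugation in $G_1$ and $G_2$; hence each relation is a $(G_1 \times G_2)$-poset in the sense of the remark following Lemma~\ref{classin}. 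Once compatibility is in hand, the matrix manipulation is purely formal and mirrors Theorem~\ref{cimsecdecom} line for line.

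With these pieces assembled, the proof is the single computation
\begin{align*}
  M(G_1 \times G_2)
  &= D \cdot \CIM(\leq) = D \cdot R \cdot A(\leq) \cdot C \\
  &= D \cdot R \cdot A(\leq_K) \cdot A(\leq_{P/K}) \cdot A(\leq_P) \cdot C \\
  &= D \cdot \CIM(\leq_K) \cdot \CIM(\leq_{P/K}) \cdot \CIM(\leq_P)\text,
\end{align*}
where the final equality results from iterating Lemma~\ref{classin}(ii). I would present it essentially in this compressed form, citing Corollary~\ref{cor:decompose-subgroups} for the middle step and Theorem~\ref{cimsecdecom} as the template for the passage from the incidence matrices to the class incidence matrices.
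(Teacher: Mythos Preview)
Your proposal is correct and follows essentially the same approach as the paper, which simply says the proof is similar to that of Theorem~\ref{cimsecdecom} in combination with the subgroup-comparison machinery from Section~\ref{sec:subgroups}. Your explicit invocation of Corollary~\ref{cor:decompose-subgroups} for the factorization $A(\leq) = A(\leq_K) \cdot A(\leq_{P/K}) \cdot A(\leq_P)$ is arguably the more precise reference, and your discussion of why each of the three relations is compatible with conjugation fills in a detail the paper leaves implicit.
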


\begin{proof}
The proof is similar to that of Theorem \ref{cimsecdecom} in combination with Corollary \ref{cor:L'<=L}.
\end{proof}

In the remainder of this section, we determine the block diagonal structure
of each of the matrices $\CIM(\leq_K)$, $\CIM(\leq_{P/K})$ and $\CIM(\leq_P)$
of $G_1 \times G_2$.

\subsection{}

The class incidence matrix of the $(G_1 \times G_2)$-poset $(\Sub{G_1 \times G_2}, \leq_K)$ is a block diagonal matrix, with one block for each pair
$([K_1], [K_2])$ of conjugacy classes $[K_i]$ of subgroups of $G_i$, $i=1,2$.

\begin{thm}\label{marksamek}
  For $K_i \leq G_i$, $i=1,2$, denote by $\CIM_{K_1, K_2}$ the class
  incidence matrix of $N_{G_1}(K_1) \times N_{G_2}(K_2)$ acting on the
  subposet of
  $(\Sub{G_1 \times G_2}, \leq)$
consisting of those subgroups $L$ with bottom groups $k_i(L) = K_i$, $i=1,2$.  Then
\[
\CIM(\leq_K) = \bigoplus_{\substack{[K_i] \in \Sub{G_i}/G_i\\i = 1,2}} \CIM_{K_1, K_2}\text.
\]
\end{thm}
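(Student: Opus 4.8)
The plan is to follow the strategy used for the corresponding statement on sections in Theorem~\ref{secp}, transferred to subgroups of $G_1 \times G_2$ via the bottom-group map $b \colon L \mapsto (k_1(L), k_2(L))$. This map is equivariant for the componentwise conjugation action of $G_1 \times G_2$ on $\Sub{G_1} \times \Sub{G_2}$, so the $(G_1 \times G_2)$-conjugacy class of a subgroup $L$ determines the pair of conjugacy classes $([k_1(L)], [k_2(L)])$. I would first record that this partitions the rows and columns of $\CIM(\leq_K)$ into blocks indexed by pairs $([K_1], [K_2])$, then show that every off-diagonal block vanishes and identify the diagonal block for $([K_1],[K_2])$ with $\CIM_{K_1,K_2}$.

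For the block structure, the key observation is that, by Definition~\ref{def:subpos-grp}, $L' \leq_K L$ forces $k_i(L') = k_i(L)$ for $i = 1,2$. Hence if $L' \leq_K L^{(g_1,g_2)}$ for some $(g_1,g_2) \in G_1 \times G_2$, then $(k_1(L'), k_2(L')) = (k_1(L)^{g_1}, k_2(L)^{g_2})$, so the bottom-group pairs of $L$ and $L'$ are conjugate. The corresponding entry of $\CIM(\leq_K)$ is therefore zero whenever the row and column classes have non-conjugate bottom-group pairs, which gives the block-diagonal shape.

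To identify the diagonal blocks, I would apply Lemma~\ref{la:orbits-of-pairs} to the $(G_1 \times G_2)$-invariant set of pairs $Z = \{(L, b(L)) : L \in \Sub{G_1 \times G_2}\} \subseteq \Sub{G_1 \times G_2} \times (\Sub{G_1} \times \Sub{G_2})$, exactly as in Proposition~\ref{la:sec_cong}. Since the stabiliser of $y = (K_1,K_2)$ is $N_{G_1}(K_1) \times N_{G_2}(K_2)$ and the fibre $Zy = b^{-1}(K_1,K_2)$ consists of the subgroups with bottom groups exactly $(K_1,K_2)$, this represents the $(G_1 \times G_2)$-classes with bottom-group pair $([K_1],[K_2])$ by subgroups in $b^{-1}(K_1,K_2)$, in bijection with the $(N_{G_1}(K_1) \times N_{G_2}(K_2))$-orbits on that fibre. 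Choosing the transversal of conjugacy classes in this way, its $([K_1],[K_2])$-part simultaneously labels the block of $\CIM(\leq_K)$ and the rows and columns of $\CIM_{K_1,K_2}$.

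For such representatives $L, L' \in b^{-1}(K_1,K_2)$, the entry of $\CIM(\leq_K)$ counts the distinct conjugates $L^{(g_1,g_2)}$ with $L' \leq_K L^{(g_1,g_2)}$. The main step, mirroring Theorem~\ref{secp}, is that any such conjugate has bottom-group pair $(K_1^{g_1}, K_2^{g_2})$, which must equal $(K_1,K_2)$, forcing $(g_1,g_2) \in N_{G_1}(K_1) \times N_{G_2}(K_2)$; and on the fibre $b^{-1}(K_1,K_2)$ the relation $\leq_K$ coincides with $\leq$. Hence the count equals the number of distinct $(N_{G_1}(K_1) \times N_{G_2}(K_2))$-conjugates of $L$ lying above $L'$ in $\leq$, which is precisely the entry of $\CIM_{K_1,K_2}$. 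The point requiring care is that restricting the conjugating elements to the normalizer neither loses nor duplicates conjugates: equality $L^{(g_1,g_2)} = L^{(g_1',g_2')}$ of subgroups is independent of the ambient group, so the notion of distinctness agrees for both actions. I expect this bookkeeping --- confirming that the two class incidence matrices count the same distinct conjugates with the same multiplicities, so that $\CIM(\leq_K)$ is compatible in the sense of Section~\ref{sec:class-incidence-matrix} --- to be the only genuinely delicate part of the argument.
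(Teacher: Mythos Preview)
Your proposal is correct and follows essentially the same approach as the paper: apply Lemma~\ref{la:orbits-of-pairs} to $Z = \{(L, b(L))\}$ to parametrize classes by pairs $([K_1],[K_2])$ with stabilizer $N_{G_1}(K_1) \times N_{G_2}(K_2)$, then use that $\leq_K$ forces equal bottom groups to get the block-diagonal shape and to restrict the conjugating element to the normalizer. Your write-up is in fact more careful than the paper's---in particular your explicit remark that $\leq_K$ coincides with $\leq$ on the fibre $b^{-1}(K_1,K_2)$, and your attention to the distinctness of conjugates, fill in details the paper leaves implicit.
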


\begin{proof}
Let $X = \Sub{G_1 \times G_2}$ and $Y = \Sub{G_1} \times \Sub{G_2}$. We identify $X$ with $Z \subseteq X \times Y$, where $Z:= \{ (x, y) : (k_1 (x), k_2 (x)) = y \}$. Then Lemma \ref{la:orbits-of-pairs} yields a partition of the conjugacy classes of subgroups of $G_1 \times G_2$ indexed by $[K_i] \in \Sub{G_i}/G_i$, $i =1, 2$.
The stabilizer of $y = (K_1, K_2) \in Y$ is $N_{G_1} (K_1) \times N_{G_2} (K_2)$ and $Zy = \{ x \in X : ( k_1 ( x), k_2 (x))  = y \}$.

Let $L \leq G_1 \times G_2$ be such that $k_i(L) = K_i$, $i = 1,2$.  In order to count the $(G_1 \times G_2)$-conjugates of a subgroup $L' \leq G_1 \times G_2$
with bottom groups $k_i(L') = K_i$, $i = 1,2$,
above $L$ in the $\leq_K$-order, it suffices to note that
$L \leq_K (L')^g$ for some $g \in G_1 \times G_2$ if and only if
$L \leq_K (L')^g$ for some $g \in N_{G_1}(K_1) \times N_{G_2}(K_2)$.

Finally by the definition of $\leq_K$ there are no incidences between subgroups with different $K_i$, giving the block diagonal structure.
\end{proof}

\begin{ex}\label{ex:cimk}
  Let $G_1 = G_2 = S_3$.  Then $\CIM(\leq_K)$ is the block sum of the
  matrices $\CIM_{K_1, K_2}$ in
  the table below, with rows and columns labelled by the conjugacy classes
  of subgroups $K$ of $S_3$.  Within $\CIM_{K_1, K_2}$,
  the row label of a subgroup of the form $P_1/K_1 \to P_2/K_2$ is just $P_1 \to P_2$,
  for brevity.  The column labels are identical and have been omitted.
\[
\begin{array}{c|cccc}
K&1&2&3&S_3\\
\hline

1&
\tiny{\begin{array}{c|cccc}
1 \to 1&1&   \cdot&   \cdot&   \cdot\\
2 \to 2& 9&   1&   \cdot& \cdot \\
3 \to 3& 2&  \cdot&   1&   \cdot\\
S_3 \to S_3& 6&  2&  3&   1\\
  \hline
  \end{array}}&
  \tiny{\begin{array}{c|c}
  1 \to 2&1\\
  \hline
  \end{array}}&
  \tiny{\begin{array}{c|cc}
  1 \to 3&1&\cdot\\
  2 \to S_3&3&1\\
  \hline
  \end{array}}&
  \tiny{
  \begin{array}{c|c}
  1 \to S_3&1\\
  \hline
  \end{array}}\\

  2&
  \tiny{
  \begin{array}{c|c}
  2 \to 1&1\\
  \hline
  \end{array}}&
  \tiny{
  \begin{array}{c|c}
  2 \to 2&1\\
  \hline
  \end{array}}&
  \tiny{
  \begin{array}{c|c}
  2 \to 3&1\\
  \hline
  \end{array}}&
  \tiny{
  \begin{array}{c|c}
  2 \to S_3&1\\
  \hline
  \end{array}}\\

  3&
  \tiny{
  \begin{array}{c|cc}
  3 \to 1& 1& \cdot\\
  S_3 \to 2&3&1\\
  \hline
  \end{array}}&
  \tiny{
  \begin{array}{c| c}
  3 \to 2&1\\
  \hline
  \end{array}}&
  \tiny{
  \begin{array}{c| c c}
  3 \to 3&1&\cdot\\
  S_3 \to S_3&1&1\\
  \hline
  \end{array}}&
  \tiny{ \begin{array}{c|c}
  3 \to S_3&1\\
  \hline
  \end{array}}\\

  S_3&
  \tiny{
  \begin{array}{c|c}
  S_3 \to 1&1\\
  \hline
  \end{array}}&
  \tiny{  \begin{array}{c| c}
  S_3 \to 2&1\\
  \hline
  \end{array}}&
  \tiny{
  \begin{array}{c|c}
  S_3 \to 3&1\\
  \hline
  \end{array}}&
  \tiny{
  \begin{array}{c|c}
  S_3 \to S_3&1\\
  \hline
  \end{array}}\\
  \end{array}
\]
\end{ex}

\subsection{}

The class incidence matrix of the $(G_1 \times G_2)$-poset $(\Sub{G_1 \times G_2}, \leq_{P/K})$ is a block diagonal matrix, with one block for each group
$U \subseteq G_i$, $i = 1,2$, up to isomorphism.

\begin{defn}
For a finite group $G$ and finite $G$-sets $X_1$ and $X_2$,
let $A_i$ be a square matrix with rows and columns labelled by
$X_i$, $i = 1,2$.  The action of $G$ on $X_1 \times X_2$ permutes
the rows and columns of the Kronecker product
$A_1 \otimes A_2$.  If the matrices
$A_1$ and $A_2$ are compatible with the $G$-action
then so is their Kronecker product, and we define
\[
A_1 \otimes_G A_2 := R(G) \cdot (A_1 \otimes A_2) \cdot C(G)\text,
\]
where the row summing and column picking matrices $R(G)$ and $C(G)$
have been constructed as in Lemma~\ref{classin}, with respect to the $G$-orbits
on $X_1 \times X_2$.
\end{defn}

For $U \sqsubseteq G_i$, consider the class incidence matrices
$A_i = \CIM_U^{G_i}(\leq)$ of the
$G_i$-posets $\Mor{G_i}(U)$, $i = 1,2$, from
Section~\ref{sec:partial-order-morphism-classes}.
By Proposition~\ref{pro:cim-mor}, these matrices, and hence
$A_1 \otimes A_2$, are
compatible with the action of $\Out(U)$ on their rows and
columns.

\begin{thm} \label{sameu}
We have
\[
\CIM (\leq_{P/K}) = \bigoplus_{U \sqsubseteq G_1, G_2} \CIM^{G_1}_{U}(\leq) \otimes_{\Out(U)} \CIM^{G_2}_{U}(\leq)\text,
\]
where, for $U \sqsubseteq G_i$, $\CIM^{G_i}_{U} (\leq)$ is  the class incidence matrix of the $G_i$-poset $\Mor{G_i}(U)$, $i = 1,2$.
\end{thm}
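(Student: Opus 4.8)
The plan is to prove the block decomposition in two stages: first isolate the block structure indexed by Goursat types $U$, and then identify each block by pulling everything back to the \emph{product} order on pairs of $U$-morphisms and reducing by the appropriate groups. First I would record the block structure. By Definition~\ref{def:subpos-grp}(iii), the relation $L'\leq_{P/K}L$ forces the canonical homomorphisms $\phi_i\colon P_i'/K_i'\to P_i/K_i$ to be isomorphisms, so $L'$ and $L$ share the same Goursat type $U$. Hence $\leq_{P/K}$ never relates subgroups of different Goursat type, $\CIM(\leq_{P/K})$ is block diagonal with one block $\CIM_U(\leq_{P/K})$ for each $U\sqsubseteq G_1,G_2$, and it remains only to identify this block.

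The key step is a clean identity at the level of \emph{full} incidence matrices. I would set $Y=\Mor{G_1}(U)\times\Mor{G_2}(U)$ and let $A^{(i)}$ be the incidence matrix of the morphism order $\leq$ on $\Mor{G_i}(U)$, so that $A^{(1)}\otimes A^{(2)}$ is the incidence matrix of the product order on $Y$. By the biset decomposition $\Sub{G_1\times G_2}(U)=\Mor{G_1}(U)\times_{\Aut(U)}\Mor{G_2}(U)^{\op}$, the map $\Pi$ identifies $\Sub{G_1\times G_2}(U)$ with the orbit set of the diagonal right-twist action $(\theta_1,\theta_2)\mapsto(\theta_1\alpha,\theta_2\alpha)$ of $\Aut(U)$ on $Y$, which is free and order-preserving (right twist preserves $\leq$). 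Computing the entry of $R(\Aut(U))\,(A^{(1)}\otimes A^{(2)})\,C(\Aut(U))$ for this diagonal action at representatives $(\theta_1,\theta_2)$ and $(\theta_1',\theta_2')$ gives $\#\{\alpha\in\Aut(U):\theta_i'\leq\theta_i\alpha,\ i=1,2\}$. By Proposition~\ref{prop:comparing-morphisms} each component condition $\theta_i'\leq\theta_i\alpha$ determines $\alpha$ uniquely when the sections are comparable, so this count is $1$ precisely when the two determined twists coincide and $0$ otherwise; by Corollary~\ref{cor:L'<=L} together with Definition~\ref{def:subpos-grp}(iii) this is exactly the indicator of $L'\leq_{P/K}L$. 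Thus $A_U(\leq_{P/K})=R(\Aut(U))\,(A^{(1)}\otimes A^{(2)})\,C(\Aut(U))$, the essential observation being that the multiplicity is always $0$ or $1$.

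From here I would pass to class incidence matrices by reduction in stages. Since the product order on $Y$ is preserved both by componentwise $G_i$-conjugation and by the diagonal $\Aut(U)$-twist, the matrix $A^{(1)}\otimes A^{(2)}$ is compatible (Remark~\ref{rmk:comp}) with the combined group $\Gamma=(G_1\times G_2)\times\Aut(U)$, whose orbits on $Y$ are the conjugacy classes of $U$-subgroups by Theorem~\ref{thm:cc-subgroups}(i). The combined reduction $R(\Gamma)\,(A^{(1)}\otimes A^{(2)})\,C(\Gamma)$ then factors in two compatible ways. Reducing first by $\Aut(U)$ (producing $A_U(\leq_{P/K})$ by the previous step) and then by conjugation yields $\CIM_U(\leq_{P/K})=R(G_1\times G_2)\,A_U(\leq_{P/K})\,C(G_1\times G_2)$ via Lemma~\ref{classin}. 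Reducing instead first by conjugation applies $R(G_1)\otimes R(G_2)$ to $A^{(1)}\otimes A^{(2)}$, which by the mixed-product property equals $\CIM^{G_1}_U(\leq)\otimes\CIM^{G_2}_U(\leq)$; the residual $\Aut(U)$-action on these $G_i$-classes is trivial on $\Inn(U)$, since a right twist by an inner automorphism moves a morphism within its $G_i$-orbit, so it descends to $\Out(U)$ and the remaining reduction is exactly $\otimes_{\Out(U)}$ as in Proposition~\ref{pro:cim-mor}. Equating the two factorizations gives $\CIM_U(\leq_{P/K})=\CIM^{G_1}_U(\leq)\otimes_{\Out(U)}\CIM^{G_2}_U(\leq)$, and summing over $U$ finishes the proof.

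I expect the main obstacle to lie in the bookkeeping of this last paragraph rather than in any single hard estimate: one must verify carefully that the two staged reductions of the combined $\Gamma$-reduction genuinely coincide, i.e. that $R(\Out(U))\,R(G_1\times G_2)=R(\Gamma)=R(G_1\times G_2)\,R(\Aut(U))$ for compatibly chosen transversals, and similarly for the column pickers. The delicate point is that $\Inn(U)$ must be absorbed into $(G_1\times G_2)$-conjugacy on $Y$ so that only $\Out(U)$ survives on the quotient of $G_i$-classes; this is what makes the tensor $\otimes_{\Out(U)}$ (rather than $\otimes_{\Aut(U)}$) the correct reduction and is precisely the feature that distinguishes the table of marks of a direct product from a naive Kronecker product.
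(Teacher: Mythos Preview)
Your proposal is correct and uses the same ingredients as the paper (Proposition~\ref{prop:comparing-morphisms}, Theorem~\ref{thm:cc-subgroups}, and the fact that $\Inn(U)$ is absorbed by $G_i$-conjugacy), but organizes them differently.

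The paper argues entry by entry: fixing $L' = \Pi(\theta_1',\theta_2')$, it invokes Lemma~\ref{la:corr}(iv) to parametrize all $L \geq_{P/K} L'$ by pairs of sections $(P_i,K_i) \geq_{P/K} (P_i',K_i')$, uses Proposition~\ref{prop:comparing-morphisms} to translate each such pair into the unique $U$-morphism $\theta_i \geq \theta_i'$, and then applies Theorem~\ref{thm:cc-subgroups} to group these pairs into $(G_1\times G_2)$-classes. This yields directly the sum $\sum_{\alpha \in T_{\theta_1,\theta_2}} \mathbf{a}(\theta_1\alpha,\theta_1')\,\mathbf{a}(\theta_2\alpha,\theta_2')$ over a coset transversal of $A_{\theta_1}\cap A_{\theta_2}$ in $\Aut(U)$, which is then recognized as the $(L,L')$-entry of $\CIM^{G_1}_U(\leq)\otimes_{\Out(U)}\CIM^{G_2}_U(\leq)$.

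You instead lift everything to the product poset $Y = \Mor{G_1}(U)\times\Mor{G_2}(U)$, establish the clean identity $A_U(\leq_{P/K}) = R(\Aut(U))\,(A^{(1)}\otimes A^{(2)})\,C(\Aut(U))$ at the level of unreduced incidence matrices (your ``count is $0$ or $1$'' step is exactly the uniqueness from Proposition~\ref{prop:comparing-morphisms} that the paper also uses), and then reach the result by equating two staged reductions of $R(\Gamma)\,(A^{(1)}\otimes A^{(2)})\,C(\Gamma)$. The paper's direct count is shorter and immediately hands you the explicit entry formula; your route makes the emergence of $\otimes_{\Out(U)}$ structurally transparent as a genuine twisted Kronecker reduction, and your intermediate identity for $A_U(\leq_{P/K})$ is a nice statement not made explicit in the paper. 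The bookkeeping you anticipate (that $R(\Gamma)$ factors either as $R(G_1\times G_2)\,R(\Aut(U))$ or as $R(\Out(U))\,R(G_1\times G_2)$ with compatibly chosen transversals) is routine for commuting actions and poses no genuine obstacle.
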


\begin{proof}
  Let $L' = (\theta': P_1'/K_1' \to P_2'/K_2')$ be a subgroup of
  $G_1 \times G_2$ with Goursat type $U$ and select $U$-morphisms $\theta_1'$
  and $\theta_2'$ such that $\Pi(\theta_1', \theta_2') = \theta'$. By Lemma
  \ref{la:corr} (iv) the subgroups $L$ of $G_1 \times G_2$ with
  $L \geq_{P/K} L'$ correspond to pairs of sections
  $(P_i, K_i) \geq_{P/K} (P_i', K_i')$, $i = 1,2$.  For each such section
  $(P_i, K_i)$, set $\theta_i = \phi_i^{-1} \theta_i'$, where
  $\phi_i \colon P_i'/K_i' \to P_i/K_i$ is the canonical isomorphism.  By
  Proposition~\ref{prop:comparing-morphisms}, $\theta_i: P_i/K_i \to U$ is the unique
  $U$-morphism in $\Mor{G_i}^{P_i,K_i}(U)$ with $\theta_i' \leq \theta_i$.
  The number of conjugates $L^x$ of a subgroup $L$ of $G_1 \times G_2$ with
  $L^x \geq_{P/K} L'$ is thus equal to the number of pairs
  $(\theta_1, \theta_2) \in \Mor{G_1}(U) \times \Mor{G_2}(U)$ with
  $\theta_i' \leq \theta_i$ such that $\Pi(\theta_1, \theta_2)$ is a
  conjugate of $L$ in $G_1 \times G_2$.

  If $L = \Pi(\theta_1, \theta_2)$ for $\theta_i \in \Mor{G_i}^{P_i,K_i}(U)$
  then, by Theorem \ref{thm:cc-subgroups}, the set of all pairs of
  $U$-morphisms mapping to a conjugate of $L$ under $\Pi$ is the
  $\Out(U)$-orbit of $[\theta_1]_{G_1} \times [\theta_2]_{G_2}$ in
  $\Mor{G_1}^{P_1,K_1}(U)/G_1 \times \Mor{G_2}^{P_2,K_2}(U)/G_2$.  The number
  of $G_i$-conjugates of $\theta_i$ above $\theta_i'$ is given by the entry
  $\mathbf{a}(\theta_i, \theta_i')$ of the class incidence matrix
  $\CIM^{G_i}_U$.  By Proposition~\ref{actonmorph}, the $\Aut(U)$-set
  $\Mor{G_i}^{P_i,K_i}(U)/G_i$ is isomorphic to $\Aut(U)/A_{\theta_i}$. Hence
  \begin{align*}
    \# \{L^x \geq_{P/K} L' : x \in G_1 \times G_2\} =
    \sum_{\alpha \in T_{\theta_1,\theta_2}} \mathbf{a}(\theta_1 \alpha, \theta_1') \, \mathbf{a}(\theta_2 \alpha, \theta_2')\text,
  \end{align*}
  where $T_{\theta_1,\theta_2}$  is a transversal  of the right
  cosets of $A_{\theta_1} \cap A_{\theta_2}$ in $\Aut(U)$.  As
  $T_{\theta_1,\theta_2}$ can also be used to represent the right
  cosets of $O_{\theta_1} \cap O_{\theta_2}$ in $\Out(U)$, the same number
  appears as the $L, L'$-entry of the matrix
  $\CIM^{G_1}_{U}(\leq) \otimes_{\Out(U)} \CIM^{G_2}_{U}(\leq)$.
\end{proof}

\begin{ex}\label{ex:cimu}
  Let $G_1 = G_2 = S_3$.  Then $\CIM(\leq_{P/K})$ is the block sum of the
  following matrices
  $\CIM^{G_1}_{U}(\leq) \otimes_{\Out(U)} \CIM^{G_2}_{U}(\leq)$.  As
  $\Out(U)$ here acts trivially, the matrices are simply the Kronecker
  squares of the matrices $\CIM^{G}_{U}(\leq)$ in
  Example~\ref{ex:s3-PKsec}.  The column labels are identical to the row
  labels and have been omitted.
\[
\begin{array}{c|c}
U & \CIM^{G_1}_{U}(\leq) \otimes_{\Out(U)} \CIM^{G_2}_{U}(\leq) \\
\hline
1&
\tiny{\begin{array}{c|cccccccccccccccc}
 1/1 \to 1/1&1&  \cdot&  \cdot&  \cdot&  \cdot&  \cdot&  \cdot&  \cdot&  \cdot&\cdot&  \cdot&  \cdot&  \cdot&  \cdot&  \cdot&  \cdot \\
1/1 \to 2/2& 3&  1&  \cdot&  \cdot&  \cdot&  \cdot&  \cdot&  \cdot&  \cdot&  \cdot&  \cdot&  \cdot&  \cdot&  \cdot&  \cdot&  \cdot\\
 1/1 \to 3/3&   1&  \cdot&  1&  \cdot&  \cdot&  \cdot&  \cdot&  \cdot&  \cdot&  \cdot&  \cdot&  \cdot&  \cdot&  \cdot&  \cdot&  \cdot\\
 1/1 \to S_3/S_3& 1&  1&  1&  1&  \cdot&  \cdot&  \cdot&  \cdot&  \cdot&  \cdot&  \cdot&  \cdot&  \cdot&  \cdot&  \cdot&  \cdot\\
  2/2 \to 1/1& 3&  \cdot&  \cdot&  \cdot&  1&  \cdot&  \cdot&  \cdot&  \cdot&  \cdot&  \cdot&  \cdot&  \cdot&  \cdot&  \cdot&  \cdot\\
   2/2 \to 2/2& 9&  3&  \cdot&  \cdot&  3&  1&  \cdot&  \cdot&  \cdot&  \cdot&  \cdot&  \cdot&  \cdot&  \cdot&  \cdot&  \cdot\\
   2/2 \to 3/3& 3&  \cdot&  3&  \cdot&  1&  \cdot&  1&  \cdot&  \cdot&  \cdot&  \cdot&  \cdot&  \cdot&  \cdot&  \cdot&  \cdot\\
   2/2 \to S_3/S_3& 3&  3&  3&  3&  1&  1&  1&  1&  \cdot&  \cdot&  \cdot&  \cdot&  \cdot&  \cdot&  \cdot&  \cdot\\
   3/3 \to 1/1& 1&  \cdot&  \cdot&  \cdot&  \cdot&  \cdot&  \cdot&  \cdot&  1&  \cdot&  \cdot&  \cdot&  \cdot&  \cdot&  \cdot&  \cdot\\
   3/3 \to 2/2& 3&  1&  \cdot&  \cdot&  \cdot&  \cdot&  \cdot&  \cdot&  3&  1&  \cdot&  \cdot&  \cdot&  \cdot&  \cdot&  \cdot\\
   3/3 \to 3/3& 1&  \cdot&  1&  \cdot&  \cdot&  \cdot&  \cdot&  \cdot&  1&  \cdot&  1&  \cdot&  \cdot&  \cdot&  \cdot&  \cdot\\
   3/3 \to S_3/S_3& 1&  1&  1&  1&  \cdot&  \cdot&  \cdot&  \cdot&  1&  1&  1&  1&  \cdot&  \cdot&  \cdot&  \cdot\\
   S_3/S_3 \to 1/1& 1&  \cdot&  \cdot&  \cdot&  1&  \cdot&  \cdot&  \cdot&  1&  \cdot&  \cdot&  \cdot&  1&  \cdot&  \cdot&  \cdot\\
   S_3/S_3 \to 2/2& 3&  1&  \cdot&  \cdot&  3&  1&  \cdot&  \cdot&  3&  1&  \cdot&  \cdot&  3&  1&  \cdot&  \cdot\\
   S_3/S_3 \to 3/3& 1&  \cdot&  1&  \cdot&  1&  \cdot&  1&  \cdot&  1&  \cdot&  1&  \cdot&  1&  \cdot&  1&  \cdot\\
   S_3/S_3 \to S_3/S_3& 1&  1&  1&  1&  1&  1&  1&  1&  1&  1&  1&  1&  1&  1&  1&  1\\
   \hline
   \end{array}}\\
2&\tiny{
 \begin{array}{c|cccc}
  2/1 \to 2/1&1&  \cdot&  \cdot&  \cdot \\
  2/1 \to S_3/3&1&  1&  \cdot&  \cdot \\
  S_3/3 \to 2/1&1&  \cdot& 1&  \cdot\\
  S_3/3 \to S_3/3&1&  1&  1&  1 \\
  \hline
  \end{array}} \\
3&\tiny{
  \begin{array}{c|c}
  3/1 \to 3/1&1\\
  \hline
  \end{array}}\\
S_3&\tiny{
  \begin{array}{c|c}
  S_3/1 \to S_3/1&1\\
  \hline
  \end{array}}\\
 \end{array}\]
\end{ex}

\begin{ex}\label{ex:a5-2}
Continuing Example~\ref{ex:a5-1} for $G = A_5$ and $U = 3$, we have
\begin{align*}
  \CIM^G_U(\leq) &= \tiny\left(\arraycolsep3pt
  \begin{array}{ccc}
    1&&\\
    1&1&\\
    1&\cdot&1\\
  \end{array}
\right)\text,&
\CIM^G_U(\leq) \otimes_{\Out(U)} \CIM^G_U(\leq) &= \tiny\left(\arraycolsep3pt
  \begin{array}{ccccc}
           1&&&&\\
            2&1&&&\\
            2&\cdot&1&&\\
            2&1&1&1&\\
            2&1&1&\cdot&1
  \end{array}
\right)\text,
\end{align*}
illustrating the effect of a non-trivial $\Out(U)$-action.
\end{ex}

\subsection{}

The class incidence matrix of the $(G_1 \times G_2)$-poset $(\Sub{G_1 \times G_2}, \leq_P)$ is a block diagonal matrix, with one block for each pair
$([P_1], [P_2])$ of conjugacy classes $[P_i]$ of subgroups of $G_i$, $i=1,2$.

\begin{thm} \label{samep}
For $P_i \leq G_i$, $i = 1,2$, denote by $\CIM_{P_1, P_2}$ the class incidence matrix of $N_{G_1}(P_1) \times N_{G_2}(P_2)$ acting on the sub poset
of $(\Sub{G_1 \times G_2}, \leq)$ consisting of those subgroups $L$ with
$p_i(L) = P_i$, $i = 1,2$.  Then
\[
\CIM(\leq_P) = \bigoplus_{\substack{P_i \in \Sub{G_i} /G_i,\\i =1,2}} \CIM_{P_1, P_2}\text,
\]
\end{thm}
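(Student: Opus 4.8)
The plan is to mirror the proof of Theorem~\ref{marksamek} almost verbatim, replacing the bottom-group maps $k_i$ by the top-group (projection) maps $p_i$ and the relation $\leq_K$ by $\leq_P$. The whole argument is an instance of the orbit-counting machinery of Lemma~\ref{la:orbits-of-pairs} together with the defining property of $\leq_P$ (equality of top groups).

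First I would set $X = \Sub{G_1 \times G_2}$ and $Y = \Sub{G_1} \times \Sub{G_2}$, and identify $X$ with the $(G_1 \times G_2)$-invariant set of pairs
\[
  Z = \{(x, y) \in X \times Y : (p_1(x), p_2(x)) = y\}\text.
\]
Applying Lemma~\ref{la:orbits-of-pairs}, the conjugacy classes of subgroups of $G_1 \times G_2$ decompose into families indexed by pairs $([P_1], [P_2]) \in \Sub{G_1}/G_1 \times \Sub{G_2}/G_2$, according to the $(G_1 \times G_2)$-orbit of the pair of top groups. Since the stabilizer of $y = (P_1, P_2)$ in $G_1 \times G_2$ is $N_{G_1}(P_1) \times N_{G_2}(P_2)$, the family corresponding to $([P_1], [P_2])$ is governed by the action of this normalizer on $Zy = \{x \in X : (p_1(x), p_2(x)) = (P_1, P_2)\}$, which is precisely the subposet used to define $\CIM_{P_1, P_2}$.

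Second, to check that the block entries coincide, I would count, for a subgroup $L$ with top groups $P_1, P_2$, the number of $(G_1 \times G_2)$-conjugates of a subgroup $L'$ (also with top groups $P_1, P_2$) lying above $L$ in the $\leq_P$-order. The key reduction is that $L \leq_P (L')^g$ for $g = (g_1, g_2)$ forces the top groups of $(L')^g$, namely $P_1^{g_1}$ and $P_2^{g_2}$, to coincide with $P_1$ and $P_2$; hence $g_i \in N_{G_i}(P_i)$, and the count over all $g \in G_1 \times G_2$ equals the count over $g \in N_{G_1}(P_1) \times N_{G_2}(P_2)$. This is exactly the entry of $\CIM_{P_1,P_2}$.

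Finally, because $\leq_P$ relates only subgroups that share the same pair of top groups, there can be no incidences between subgroups belonging to different families; this yields the block-diagonal form and exhibits $\CIM(\leq_P)$ as the direct sum of the $\CIM_{P_1,P_2}$ over pairs of conjugacy classes. The one genuinely substantive point is the top-group reduction in the second step; everything else is the formal apparatus of Lemma~\ref{la:orbits-of-pairs} and the definition of the class incidence matrix. That reduction is clean precisely because $\leq_P$ is characterized by equality of top groups, so I expect no real obstacle beyond transcribing the $\leq_K$ argument with the roles of top and bottom groups interchanged.
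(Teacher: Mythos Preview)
Your proposal is correct and follows exactly the approach the paper takes: the paper's proof simply says ``Similar to the proof of Theorem~\ref{marksamek}, with $X = \Sub{G_1 \times G_2}$, $Y = \Sub{G_1} \times \Sub{G_2}$, $Z = \{(x, y) : (p_1 (x), p_2 (x)) = y\} \subseteq X \times Y$,'' which is precisely the transcription you describe.
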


\begin{proof}
  Similar to the proof of Theorem~\ref{marksamek}, with
  $X = \Sub{G_1 \times G_2}$, $Y = \Sub{G_1} \times \Sub{G_2}$
  $Z = \{(x, y) : (p_1 (x), p_2 (x)) = y\} \subseteq X \times Y$.
\end{proof}

\begin{ex}\label{ex:cimp}
  Again we let $G_1 = G_2 = S_3$.  Then $\CIM(\leq_P)$ is the block sum of the matrices $\CIM_{P_1,P_2}$ in
  the table below, with rows and columns labelled by the conjugacy classes
  of subgroups $P$ of $S_3$. Similar to Example~\ref{ex:cimk}, within $\CIM_{P_1,P_2}$,
  the row label of a subgroup of the form $P_1/K_1 \to P_2/K_2$ is just $K_1 \to K_2$,
  for brevity.  The column labels are identical and have been omitted.
\[
\begin{array}{c|cccc}
P&1&2&3&S_3\\
\hline
1&
\tiny{
  \begin{array}{c|c}
 1 \to 1&1\\
 \hline
 \end{array}}&
\tiny{\begin{array}{c|c}
 1 \to 2&1\\
 \hline
 \end{array}}&
 \tiny{\begin{array}{c|c}
 1 \to 3&1\\
 \hline
 \end{array}}&
 \tiny{\begin{array}{c|c}
 1 \to S_3&1\\
 \hline
 \end{array}}\\

 2&
 \tiny{\begin{array}{c|c}
 2 \to 1&1\\
 \hline
 \end{array}}&
 \tiny{\begin{array}{c|cc}
 1 \to 1&1&\cdot\\
 2 \to 2&1&1\\
 \hline
 \end{array}}&
 \tiny{\begin{array}{c|c}
 2 \to 3&1\\
 \hline
 \end{array}}&
 \tiny{\begin{array}{c|cc}
 1 \to 3&1&\cdot\\
 2 \to S_3&1&1\\
 \hline
 \end{array}}\\

 3&
  \tiny{\begin{array}{c|c}
 3 \to 1&1\\
 \hline
 \end{array}}&
 \tiny{\begin{array}{c|c}
 3 \to 2&1\\
 \hline
 \end{array}}&
 \tiny{\begin{array}{c|cc}
 1 \to 1&1&\cdot\\
 3 \to 3&1&1\\
 \hline
 \end{array}}&
 \tiny{\begin{array}{c|c}
 3 \to S_3&1\\
 \hline
 \end{array}}\\

S_3&
\tiny{\begin{array}{c|c}
S_3 \to 1&1\\
\hline
\end{array}}&
\tiny{\begin{array}{c|cc}
3 \to 1&1&\cdot\\
S_3 \to 2&1&1\\
\hline
\end{array}}&
\tiny{\begin{array}{c|c}
S_3 \to 3&1\\
\hline
\end{array}}&
\tiny{\begin{array}{c|ccc}
1 \to 1&1&\cdot&\cdot\\
3 \to 3&1&1&\cdot\\
S_3 \to S_3&1&1&1\\
\hline
\end{array}}\\
\end{array}
\]
\end{ex}

\begin{ex}
  Combining the matrices from Examples~\ref{ex:cimk}, \ref{ex:cimu}
  and~\ref{ex:cimp} according to Theorem~\ref{tm:tom}, yields the table of
  marks $M$ of $S_{3} \times S_{3}$ with rows and columns sorted by section
  size, as in Example~\ref{ex:cimu}:
\[
M = \footnotesize \left(\begin{array}{cccccccccccccccc|cccc|c|c}
   36&  \cdot&   \cdot&  \cdot&  \cdot&  \cdot&  \cdot&  \cdot&   \cdot&  \cdot&  \cdot&  \cdot&  \cdot&  \cdot&  \cdot&  \cdot& \cdot&  \cdot&  \cdot&  \cdot& \cdot&  \cdot\\
   18&  6&   \cdot&  \cdot&  \cdot&  \cdot&  \cdot&  \cdot&   \cdot&  \cdot&  \cdot&  \cdot&  \cdot&  \cdot&  \cdot&  \cdot& \cdot&  \cdot&  \cdot&  \cdot& \cdot& \cdot\\
   12&  \cdot& 12&  \cdot&  \cdot&  \cdot&  \cdot&  \cdot&   \cdot&  \cdot&  \cdot&  \cdot&  \cdot&  \cdot&  \cdot&  \cdot&  \cdot&  \cdot&  \cdot&  \cdot& \cdot&   \cdot\\
   6&  6&  6& 6&  \cdot&  \cdot&  \cdot&  \cdot&   \cdot&  \cdot&  \cdot&  \cdot&  \cdot&  \cdot&  \cdot&  \cdot& \cdot&  \cdot&  \cdot&  \cdot& \cdot& \cdot\\
   18&  \cdot&   \cdot&  \cdot& 6&  \cdot&  \cdot&  \cdot&   \cdot&  \cdot&  \cdot&  \cdot&  \cdot&  \cdot&  \cdot&  \cdot& \cdot&  \cdot&  \cdot&  \cdot&  \cdot&  \cdot\\
    9&  3&   \cdot&  \cdot&  3& 1&  \cdot&  \cdot&  \cdot&  \cdot&  \cdot&  \cdot&  \cdot&  \cdot&  \cdot&  \cdot& 1&  \cdot&  \cdot&  \cdot& \cdot&  \cdot\\
    6&  \cdot&  6&  \cdot& 2&  \cdot& 2&  \cdot&   \cdot&  \cdot&  \cdot&  \cdot&  \cdot&  \cdot&  \cdot&  \cdot& \cdot&  \cdot&  \cdot&  \cdot& \cdot&  \cdot\\
   3& 3&  3& 3& 1& 1& 1& 1&   \cdot&  \cdot&  \cdot&  \cdot&  \cdot&  \cdot&  \cdot&  \cdot& 1& 1&  \cdot&  \cdot& \cdot& \cdot\\
  12& \cdot&  \cdot&  \cdot&  \cdot&  \cdot&  \cdot&  \cdot&12&  \cdot&  \cdot&  \cdot&  \cdot& \cdot&  \cdot&  \cdot& \cdot&  \cdot&  \cdot&  \cdot& \cdot&  \cdot\\
    6& 2&  \cdot& \cdot&  \cdot&  \cdot&  \cdot&  \cdot&  6& 2&  \cdot&  \cdot&  \cdot&  \cdot&  \cdot&  \cdot& \cdot&  \cdot&  \cdot&  \cdot& \cdot& \cdot\\
    4&  \cdot&  4& \cdot& \cdot&  \cdot&  \cdot&  \cdot&  4 & \cdot& 4&  \cdot&  \cdot&  \cdot&  \cdot&  \cdot& \cdot&  \cdot&  \cdot&  \cdot&  4&  \cdot\\
    2& 2&  2& 2& \cdot& \cdot&  \cdot&  \cdot&  2& 2& 2& 2&  \cdot&  \cdot&  \cdot&  \cdot& \cdot& \cdot&  \cdot&  \cdot& 2& \cdot\\
    6& \cdot&   \cdot&  \cdot& 6& \cdot&  \cdot& \cdot&  6&  \cdot&  \cdot&  \cdot& 6&  \cdot&  \cdot&  \cdot& \cdot& \cdot&  \cdot&  \cdot& \cdot&  \cdot\\
    3& 1&  \cdot&  \cdot& 3& 1& \cdot& \cdot&  3& 1&  \cdot&  \cdot& 3& 1&  \cdot&  \cdot& 1& \cdot& 1&  \cdot& \cdot&  \cdot\\
    2& \cdot&  2&  \cdot& 2& \cdot& 2& \cdot&  2& \cdot&  2&  \cdot& 2& \cdot&  2&  \cdot& \cdot&  \cdot& \cdot&  \cdot&  2& \cdot\\
    1& 1& 1& 1&1& 1& 1& 1& 1& 1& 1& 1& 1& 1& 1& 1&1& 1& 1& 1&  1&  1\\
    \hline
  18&  \cdot&   \cdot&  \cdot&  \cdot&  \cdot&  \cdot&  \cdot&  \cdot& \cdot&  \cdot&  \cdot& \cdot&  \cdot&  \cdot&  \cdot& 2&  \cdot&  \cdot& \cdot& \cdot&  \cdot\\
    6&  \cdot&  6&  \cdot&  \cdot&  \cdot&  \cdot&  \cdot&  \cdot& \cdot&  \cdot&  \cdot& \cdot&  \cdot&  \cdot&  \cdot& 2& 2&  \cdot& \cdot& \cdot& \cdot\\
    6&  \cdot&  \cdot&   \cdot&  \cdot&  \cdot&  \cdot&  \cdot& 6& \cdot&  \cdot&  \cdot& \cdot&  \cdot&  \cdot&  \cdot&  2&  \cdot& 2& \cdot& \cdot& \cdot\\
    2&  \cdot& 2&   \cdot&  \cdot&  \cdot&  \cdot&  \cdot& 2& \cdot& 2&  \cdot& \cdot&  \cdot&  \cdot&  \cdot& 2& 2& 2 & 2&2& 2\\
    \hline
  12&  \cdot&  \cdot&   \cdot&  \cdot&  \cdot&  \cdot&  \cdot&  \cdot& \cdot&  \cdot&   \cdot& \cdot& \cdot&  \cdot&  \cdot&  \cdot&   \cdot&  \cdot&  \cdot&  6&   \cdot\\
   \hline
    6&  \cdot&  \cdot&   \cdot&  \cdot&  \cdot&  \cdot&  \cdot&  \cdot& \cdot&  \cdot&   \cdot& \cdot& \cdot&  \cdot&  \cdot& 2&  \cdot&  \cdot&  \cdot&  3& 1
 \end{array}\right)
\]
\end{ex}


\section{The Double Burnside Algebra of $S_3$}
\label{sec:burnside}

As an application of the ideas from previous sections we now
construct a mark homomorphism for the rational double Burnside algebra of $G = S_3$.

\subsection{The Double Burnside Ring.}
\label{sec:double-burnside}

Let $G$, $H$ and $K$ be finite groups.  The Grothendieck group of
the category of finite $(G, H)$-bisets is denoted by $B(G, H)$.
If $(G, H)$-bisets are identified with $G \times H$-sets,
then the abelian group $B(G, H)$ is identified with the Burnside group
$B(G \times H)$, and hence the transitive bisets $[G \times H/L]$,
where $L$ runs through a transversal of the conjugacy classes of
subgroups of $G \times H$, form a $\Z$-basis of $B(G, H)$.

There is a bi-additive map from   $B(G, H) \times B(H, K)$ to
$B(G, K)$ given by
\begin{align*}
 ([X], [Y]) &\mapsto [X] \cdot_{H} [Y] = [X \times_H Y]\text.
\end{align*}
Multiplication of transitive bisets is described by the following
Mackey-formula.
\begin{prop}[\protect{\cite[2.3.24]{Bouc2010}}]\label{prop:double-mackey}
  Let $L \leq G \times H$ and $M \leq H \times K$.  Let $X \subseteq H$ be a transversal
of the
$(p_2(L),p_1(M))$-double cosets in $H$.
Then
  \begin{align*}
    [(G \times H)/L] \cdot_H [(H \times K)/M] =
\sum_{x \in X} [(G \times K)/(L^{(1, x)} \ast M)]
  \end{align*}
\end{prop}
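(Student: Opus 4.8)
The plan is to unwind the definition of the biset product and reduce the statement to an orbit--stabilizer analysis of the diagonal $H$-action on a product of two transitive bisets. Recall that $[X]\cdot_H[Y] = [X \times_H Y]$, and that under the identification of $(G,H)$-bisets with $(G\times H)$-sets the transitive biset $(G\times H)/L$ has a distinguished base point $u_0$ whose $(G\times H)$-stabilizer is $L$; likewise write $v_0$ for the base point of $V = (H\times K)/M$, with stabilizer $M$. Setting $U = (G\times H)/L$, the product $U \times_H V$ is the set of $H$-orbits $[u,v]$ on $U\times V$ under $(u,v).h = (u.h,\, h^{-1}.v)$ (Definition~\ref{def:biset_comp}), carrying the residual left $G$- and right $K$-actions. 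First I would show that every class is $(G,K)$-conjugate to one of the form $w_x := [u_0,\, x.v_0]$ with $x \in H$: by transitivity an arbitrary element is $[g.u_0.h,\ h'.v_0.k]$, and the tensor relation absorbs $h$ into the right-hand factor, giving $[u,v] = g.\,[u_0,\ (hh').v_0]\,.k = g.w_{hh'}.k$.

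The second step is to decide when two representatives agree up to the $(G,K)$-action, and I expect the answer to be exactly the $(p_2(L), p_1(M))$-double coset condition. Writing $g.w_{x'}.k = w_x$ via the defining relation $[a,b]=[a',b'] \iff (a',b')=(a.h,\ h^{-1}.b)$ for some $h\in H$, the two equalities $g.u_0.h = u_0$ and $x.v_0 = h^{-1}.x'.v_0.k$ translate, through the stabilizers $L$ and $M$, into $(g, h^{-1})\in L$ and $(x^{-1}h^{-1}x', k^{-1})\in M$. Solvability for $g$ forces $h \in p_2(L)$, solvability for $k$ forces $x^{-1}h^{-1}x' \in p_1(M)$, and conversely both are sufficient; eliminating $h$ shows that $w_x$ and $w_{x'}$ are $(G,K)$-conjugate precisely when $x' \in p_2(L)\,x\,p_1(M)$. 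Hence the $(G,K)$-orbits are indexed by the transversal $X$ of the $(p_2(L), p_1(M))$-double cosets in $H$.

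Finally I would identify each orbit as a transitive $(G,K)$-biset by computing the stabilizer of $w_x$. Specializing the previous computation to $x'=x$ gives $\mathrm{Stab}(w_x) = \{(g,k) : (g,h^{-1})\in L,\ (x^{-1}h^{-1}x, k)\in M \text{ for some } h\in H\}$, and reindexing $h \mapsto h^{-1}$ together with the identity $L^{(1,x)} = \{(g, x^{-1}hx) : (g,h)\in L\}$ shows this to be exactly $L^{(1,x)} * M$, by the defining formula for the star product. Thus the orbit of $w_x$ is $(G\times K)/(L^{(1,x)}*M)$, and summing over $x\in X$ yields the claimed identity.

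The main obstacle I anticipate is purely bookkeeping: keeping the biset conventions straight --- the passage between the separate left/right actions on $U$ and $V$ and the single $(G\times H)$- and $(H\times K)$-actions, the direction of the conjugation by $(1,x)$, and the inverse appearing in the tensor relation --- so that the double coset $p_2(L)\backslash H/p_1(M)$ and the conjugate $L^{(1,x)}$ land on the correct sides rather than as their opposites. Once these conventions are fixed no genuinely hard step remains; the result is a faithful transcription of the orbit and stabilizer computations.
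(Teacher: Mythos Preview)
The paper does not give its own proof of this proposition: it is stated with a citation to \cite[2.3.24]{Bouc2010} and used without further justification. Your argument is the standard one (and essentially the one in Bouc's book): parametrize $H$-orbits on $U\times V$ by double cosets via orbit representatives $w_x=[u_0,\,x.v_0]$, then compute the $(G\times K)$-stabilizer of $w_x$ and recognize it as $L^{(1,x)}*M$. The steps you outline are correct, and your caveat is exactly right: the only delicate point is fixing the dictionary between the $(G,H)$-biset structure and the $(G\times H)$-set structure (e.g.\ whether $u.h$ corresponds to $(1,h)$ or $(1,h^{-1})$ acting), which determines whether you see $k$ or $k^{-1}$ in the stabilizer condition and on which side the conjugation by $x$ lands. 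Once a convention is chosen and applied consistently, your computations go through verbatim.
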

With this multiplication, in particular,  $B(G, G)$ is a ring, the
\emph{double Burnside ring} of~$G$.

The rational double Burnside algebra $\Q B(G, G) = \Q \otimes_{\Z} B(G, G)$
is known to be semisimple, if and only if $G$ is cyclic \cite[Proposition~6.1.7]{Bouc2010}.  Little more is
known about the structure of $\Q B(G, G)$ in general.

\subsection{A Mark Homomorphism for the Double Burnside Ring of $S_3$.}
\label{sec:ghost-s3}

For the ordinary Burnside ring $B(G)$, the table of marks of $G$
is the matrix of the mark isomorphism $\beta_G \colon \Q B(G) \to \Q^r$
between the rational Burnside algebra and its ghost algebra.
It is an open question, whether there exist equivalent constructions
of ghost algebras and mark homomorphims for the double Burnside ring.
Boltje and Danz~\cite{BoltjeDanz2013} have investigated the role
of the table of marks of the direct product $G \times G$ in this context.
Here, we use the decomposition of the table of marks of $G \times G$
from Theorem~\ref{tm:tom} and the idea of transposing the $\leq_P$ part
from Section~\ref{sec:sect-latt-revis} in order to build a satisfying
ghost algebra for the group $G = S_3$.

For this purpose, we first set up a labelling of the natural basis of
$\Q B(G, G)$ as follows.
Set $I = \{1, \dots, 22\}$.
Let $\{L_i : i \in I\}$ be the conjugacy class representatives from
Example~\ref{ex:s3-ccs}.
Then the rational Burnside algebra $\Q B(G, G)$ has a $\Q$-basis
consisting of elements
$b_i = [G\times G/L_i]$, $i \in I$, and multiplication
defined by~\ref{prop:double-mackey}.

By Theorem~\ref{tm:tom}, the table of marks $M$ of $G \times G$ is a matrix product
\begin{align*}
  M = D_0 \cdot  \CIM(\leq_K) \cdot \CIM(\leq_{P/K}) \cdot \CIM(\leq_P)\text,
\end{align*}
of a diagonal matrix $D_0$ with entries $\left|N_{G \times G}(L_i) : L_i\right|$,
$i \in I$, and three class incidence matrices.
For our purpose, we now modify this product and set
\begin{align*}
  M' = \tfrac16\, D_0 \cdot  \CIM(\leq_K) \cdot \CIM(\leq_{P/K}) \cdot D_1 \cdot \CIM(\geq_P) \cdot D_2\text,
\end{align*}
where
\begin{align*}
D_1 &= \diag(1, 1, 1, 1, 1, 1, 1, 1, 2, 2, 2, 2, 6, 6, 6, 6, 1, 1, 6, 6, 1, 1)\text,\\
D_2 &= \diag(1, 1, 1, 1, 3, 3, 3, 3, 1, 1, 1, 1, 1, 1, 1, 1, 3, 3, 1, 1, 2, 6)\text,
\end{align*}
are diagonal matrices.  The resulting matrix is
\begin{align*} M' &= \tiny
\renewcommand{\arraystretch}{1.2}\arraycolsep=4pt
\left(\begin{array}{rrrr|rrrr|rrrr|rrrr|rr|rr|r|r}%
6&.&.&.&.&.&.&.&.&.&.&.&.&.&.&.&.&.&.&.&.&.\\%
3&1&.&.&.&.&.&.&.&.&.&.&.&.&.&.&.&.&.&.&.&.\\%
2&.&2&.&.&.&.&.&.&.&.&.&.&.&.&.&.&.&.&.&.&.\\%
1&1&1&1&.&.&.&.&.&.&.&.&.&.&.&.&.&.&.&.&.&.\\\hline%
3&.&.&.&3&.&.&.&.&.&.&.&.&.&.&.&.&.&.&.&.&.\\%
\nicefrac{3}{2}&\nicefrac{1}{2}&.&.&\nicefrac{3}{2}&\nicefrac{1}{2}&.&.&.&.&.&.&.&.&.&.&.&.&.&.&.&.\\%
1&.&1&.&1&.&1&.&.&.&.&.&.&.&.&.&.&.&.&.&.&.\\%
\nicefrac{1}{2}&\nicefrac{1}{2}&\nicefrac{1}{2}&\nicefrac{1}{2}&\nicefrac{1}{2}&\nicefrac{1}{2}&\nicefrac{1}{2}&\nicefrac{1}{2}&.&.&.&.&.&.&.&.&.&.&.&.&.&.\\\hline%
2&.&.&.&.&.&.&.&4&.&.&.&.&.&.&.&.&.&.&.&.&.\\%
1&\nicefrac{1}{3}&.&.&.&.&.&.&2&\nicefrac{2}{3}&.&.&.&.&.&.&.&.&.&.&.&.\\%
\nicefrac{2}{3}&.&\nicefrac{2}{3}&.&.&.&.&.&\nicefrac{4}{3}&.&\nicefrac{4}{3}&.&.&.&.&.&.&.&.&.&.&.\\%
\nicefrac{1}{3}&\nicefrac{1}{3}&\nicefrac{1}{3}&\nicefrac{1}{3}&.&.&.&.&\nicefrac{2}{3}&\nicefrac{2}{3}&\nicefrac{2}{3}&\nicefrac{2}{3}&.&.&.&.&.&.&.&.&.&.\\\hline%
1&.&.&.&3&.&.&.&2&.&.&.&6&.&.&.&.&.&.&.&.&.\\%
\nicefrac{1}{2}&\nicefrac{1}{6}&.&.&\nicefrac{3}{2}&\nicefrac{1}{2}&.&.&1&\nicefrac{1}{3}&.&.&3&1&.&.&.&.&.&.&.&.\\%
\nicefrac{1}{3}&.&\nicefrac{1}{3}&.&1&.&1&.&\nicefrac{2}{3}&.&\nicefrac{2}{3}&.&2&.&2&.&.&.&.&.&.&.\\%
\nicefrac{1}{6}&\nicefrac{1}{6}&\nicefrac{1}{6}&\nicefrac{1}{6}&\nicefrac{1}{2}&\nicefrac{1}{2}&\nicefrac{1}{2}&\nicefrac{1}{2}&\nicefrac{1}{3}&\nicefrac{1}{3}&\nicefrac{1}{3}&\nicefrac{1}{3}&1&1&1&1&.&.&.&.&.&.\\\hline%
3&.&.&.&.&1&.&.&.&.&.&.&.&.&.&.&1&.&.&.&.&.\\%
1&.&1&.&.&1&.&1&.&.&.&.&.&.&.&.&1&1&.&.&.&.\\\hline%
1&.&.&.&.&1&.&.&2&.&.&.&.&2&.&.&1&.&2&.&.&.\\%
\nicefrac{1}{3}&.&\nicefrac{1}{3}&.&.&1&.&1&\nicefrac{2}{3}&.&\nicefrac{2}{3}&.&.&2&.&2&1&1&2&2&.&.\\\hline%
2&.&.&.&.&.&.&.&.&.&2&.&.&.&.&.&.&.&.&.&2&.\\\hline%
1&.&.&.&.&1&.&.&.&.&1&.&.&.&.&.&1&.&.&.&1&1\\%
\end{array}\right)
\end{align*}
The matrix $M' = (m'_{ij})$ is obviously invertible,
hence there are unique elements $c_j \in \Q B(G, G)$, $j \in I$, such that
\begin{align*}
  b_i = \sum_{j \in I} m'_{ij} c_j\text,
\end{align*}
forming a new $\Q$-basis of $\Q B(G, G)$.

\begin{thm}
  Let $G = S_3$.
 Then the linear map $\beta'_{G \times G} \colon \Q B(G, G) \to \Q^{8 \times 8}$
 defined by
   \begin{align*}
 \beta'_{G \times G}\Bigl(\sum_{i \in I} x_i c_i\Bigr) = \left(
     \begin{array}{cccc|cc|c|c}
       x_1&x_2&x_3&x_4&.&.&.&.\\
       x_5&x_6&x_7&x_8&.&.&.&.\\
       x_9&x_{10}&x_{11}&x_{12}&.&.&.&.\\
       .&.&.&x_{22}&.&.&.&.\\ \hline
       .&.&.&.&x_{17}&x_{18}&.&.\\
       .&.&.&.&.&x_{22}&.&.\\ \hline
       .&.&.&.&.&.&x_{21}&.\\ \hline
       x_{13}&x_{14}&x_{15}&x_{16}&x_{19}&x_{20}&.&x_{22}
     \end{array}
 \right)\text,
   \end{align*}
 where $c_i \in \Q B(G, G)$ are defined as above, and $x_i \in \Q$, $i \in I$,
 is an injective homomorphism of algebras.
\end{thm}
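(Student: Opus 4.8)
The plan is to treat the two assertions separately, and throughout to read the target $\Q^{8\times 8}$ as the algebra of matrices whose rows and columns are indexed by the eight conjugacy classes of sections of $G=S_3$ listed in Example~\ref{ex:s3-PKsec}, grouped by Goursat type $U\in\{1,2,3,S_3\}$ into the blocks of sizes $4,2,1,1$ visible in the statement. Injectivity is then immediate: every coordinate $x_1,\dots,x_{22}$ occurs as an entry of $\beta'_{G\times G}(\sum_i x_i c_i)$ (with $x_{22}$ appearing on the diagonal, e.g.\ in position $(8,8)$), so all coordinates can be recovered from the image and $\ker\beta'_{G\times G}=0$.

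For the homomorphism property I would first pin down the unit. The identity of $B(G,G)$ is the class of the diagonal biset $[(G\times G)/\Delta G]$, and $\Delta G$ is the graph of $\id\colon S_3/1\to S_3/1$, that is, the class $L_{22}$ of Example~\ref{ex:s3-ccs}. Reading the last row of $M'$ gives $b_{22}=c_1+c_6+c_{11}+c_{17}+c_{21}+c_{22}$, and applying $\beta'_{G\times G}$ places a $1$ in each of the eight diagonal slots, so $\beta'_{G\times G}(b_{22})=I_8$. This both fixes the unit and is a first consistency check on the whole construction.

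The heart of the argument is to show $\beta'_{G\times G}(b_i\cdot_G b_j)=\beta'_{G\times G}(b_i)\,\beta'_{G\times G}(b_j)$, which by bilinearity reduces to the basis. The conceptual reason is that the $(s,t)$-entry of $\beta'_{G\times G}(b_i)$ is a normalized \emph{section mark} of the transitive biset $[(G\times G)/L_i]$, and that composition of bisets becomes matrix multiplication over an intermediate section index. Concretely, I would combine the double Mackey formula (Proposition~\ref{prop:double-mackey}) with the description of the subgroup product $L*M$ via the Butterfly meet (Lemmas~\ref{butters} and~\ref{lm:star_butter}): the double cosets occurring in Mackey's formula are indexed by the intermediate sections arising as Butterfly meets, so summing over them reproduces an expression of the form $\sum_u (\text{mark})_{s,u}\,(\text{mark})_{u,t}$. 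Here the replacement, relative to Theorem~\ref{tm:tom}, of $\CIM(\leq_P)$ by $D_1\,\CIM(\geq_P)\,D_2$ — motivated by the size-compatible order $\leq'$ of Section~\ref{sec:sect-latt-revis} — is precisely what transposes the contravariant ``top-group'' direction so that it composes covariantly like a matrix, while the scalar $\tfrac16$ together with $D_0,D_1,D_2$ supplies the normalization (built from $|N_{G\times G}(L):L|$, $|Z(U)|$ and $|\Out(U)|$, in the spirit of Corollary~\ref{cor:nor-idx}) that makes the two sides agree exactly rather than merely up to proportionality.

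The main obstacle, as I see it, is exactly this matching of constants: Mackey's formula together with the $*$-product yields the combinatorial shape of the product, but verifying that the specific factors $\tfrac16 D_0$, $D_1$, $D_2$ turn the raw incidence counts recorded in $M'$ into genuinely multiplicative marks requires careful bookkeeping of normalizer indices and outer-automorphism groups across the four types $U$, including the off-diagonal contributions in the last row (coming from sections with bottom group $1$). These off-diagonal terms are what force the image to be a \emph{non-semisimple} subalgebra of $\Q^{8\times 8}$, as it must be since $\Q B(S_3,S_3)$ is not semisimple. If a fully conceptual verification of the constants becomes unwieldy, the fallback is a finite check: the structure constants of $\Q B(G,G)$ in the basis $(b_i)$ are computable from Proposition~\ref{prop:double-mackey}, and one simply compares $\beta'_{G\times G}(b_i\cdot_G b_j)$ with the matrix product $\beta'_{G\times G}(b_i)\,\beta'_{G\times G}(b_j)$ for all $i,j\in I$, an explicit and entirely finite computation.
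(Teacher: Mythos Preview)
Your injectivity argument is fine and slightly more concrete than the paper's ``dimension count''. The unit check is a pleasant sanity test but not part of the actual proof. Where your proposal diverges from the paper is in the homomorphism property.

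Your conceptual route via Butterfly meets and ``section marks'' is suggestive but you do not actually carry it through: you never make precise what functional of a biset the $(s,t)$-entry of $\beta'_{G\times G}(b_i)$ is, nor do you verify that such a functional is multiplicative under $\cdot_G$. You are candid about this, and your fallback---checking $\beta'_{G\times G}(b_i\cdot_G b_j)=\beta'_{G\times G}(b_i)\beta'_{G\times G}(b_j)$ for all $22^2$ pairs---would of course work. The paper, however, takes a more structured computational path that you miss entirely. It realizes $\beta'_{G\times G}$ as a compression of the \emph{right regular representation}: one computes the matrices $C_i$ of right multiplication by $c_i$ on $\Q B(G,G)$, observes (by inspection) that each $C_i^T$ is compatible, in the sense of Section~\ref{sec:class-incidence-matrix}, with the equivalence $\equiv$ on $I$ that identifies subgroups with the same \emph{second} Goursat section, and then sets $\beta'(c_i)=C(\equiv)^T\, C_i\, R(\equiv)^T$. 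Multiplicativity then drops out of Lemma~\ref{classin} in one line, since $C_iC_k=C_{ik}$ already holds in the regular representation and $R(\equiv)^T C(\equiv)^T$ can be inserted between the factors. One still has to check that this $\beta'$ agrees with the explicit $8\times 8$ pattern in the statement, but the point is that the homomorphism property is obtained structurally rather than by verifying $484$ identities. Your Butterfly/mark heuristic might eventually yield a genuinely conceptual proof valid beyond $S_3$, which the paper's argument does not; but as written it is a sketch, and the paper's regular-representation-plus-compatibility device is the idea you are missing.
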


\begin{proof}
This claim is validated by an explicit calculation, whose details we omit.
The general strategy is as follows.
For $i \in I$, let $C_i$ be the matrix of $c_i$ in the right regular representation of $\Q B(G, G)$ (computed with the help of the Mackey formula in Proposition~\ref{prop:double-mackey}).  Let $\equiv$ be the equivalence relation on
$I$ corresponding to the kernel of the map that sends the
conjugacy class of a subgroup $L = (P/K \to P'/K')$ to the conjugacy class
of the section $P'/K'$. Then $\equiv$ partitions $I$ as
\[
\{
\{1,5,9,13\},
\{2,6,10,14\},
\{3,7,11,15\},
\{4,8,12,16\},
\{17,19\},
\{18,20\},
\{21\},
\{22\}
\}\text.
\]
It turns out that all transposed matrices $C_i^T$ are compatible with the
equivalence $\equiv$ in the sense of
Section~\ref{sec:class-incidence-matrix}.  Hence, after choosing a
transversal of $\equiv$, and using the corresponding row summing and column
picking matrices $R(\equiv)$ and $C(\equiv)$, the map $\beta'$ defined by
\begin{align*}
  \beta'(c_i) = C(\equiv)^T \cdot C_i \cdot R(\equiv)^T\text,\quad i \in I\text,
\end{align*}
is independent of the choice of transversal.  In fact,
$\beta' = \beta'_{G\times G}$.  By Lemma~\ref{classin},
\begin{align*}
  \beta'(c_i c_k)
= C(\equiv)^T \cdot C_i \cdot C_k \cdot R(\equiv)^T
= \beta'(c_i) \cdot C(\equiv)^T \cdot C_k \cdot R(\equiv)^T
= \beta'(c_i) \cdot \beta'(c_k)\text,
\end{align*}
for $i, k \in I$,
showing that $\beta'_{G\times G} = \beta'$ is a homomorphism.
 Injectivity follows from a dimension count.
\end{proof}

It might be worth pointing out that the equivalence $\equiv$, and hence the
notion of compatibility and the map $\beta'$ depend on the basis used for the
matrices of the right regular representation.  In the case $G = S_3$, the
natural basis $\{b_i\}$ of $\Q B(G,G)$ also yields compatible matrices, but
the corresponding map $\beta'$ is not injective.  A base change under the
table of marks of $G \times G$ gives matrices which are not compatible.
Changing basis under the matrix product
$D_0 \cdot \CIM(\leq_K) \cdot \CIM(\leq_{P/K}) \cdot \CIM(\geq_P)$ yields
compatible matrices and an injective homomorphism like $M'$ does.  Our
matrices $\beta'_{G\times G}(c_i)$ have the added benefit of being normalized
and extremely sparse, exposing other representation theoretic properties of
the algebra $\Q B(G, G)$, such as the following.

\begin{cor}
Let $G = S_3$ and denote by $J$ the Jacobsen radical of the rational Burnside algebra $\Q B(G, G)$.
\begin{enumerate}
\item With $c_i$ as above, $\{c_i: i = 4,8,12,13,14,15,16,18,19,20\}$ is a
  basis of $J$.
\item
  $\Q B(G, G)/J \cong  \Q^{3 \times 3} \oplus \Q \oplus \Q \oplus \Q$.
\end{enumerate}
\end{cor}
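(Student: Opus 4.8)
The plan is to transport the entire question through the injective algebra homomorphism $\beta'_{G\times G}$ of the preceding theorem. Since $\beta'_{G\times G}$ is an injective homomorphism of algebras, it identifies $\Q B(G,G)$ with its image $A = \im \beta'_{G\times G}$, the subalgebra of $\Q^{8\times 8}$ consisting of all matrices of the shape displayed in the theorem. Writing $S = \{4,8,12,13,14,15,16,18,19,20\}$, I would first observe that, for $i \in S$, the matrix $\beta'_{G\times G}(c_i)$ is a single matrix unit sitting in one of the ten off-diagonal positions $(1,4),(2,4),(3,4),(5,6),(8,1),(8,2),(8,3),(8,4),(8,5),(8,6)$. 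Let $N$ be their span and let $D \subseteq A$ be the block-diagonal part, i.e. the matrices in $A$ supported on the diagonal block $\{1,2,3\}$ and the five singleton diagonal positions. Then $A = D \oplus N$ as a vector space, and because $\beta'_{G\times G}$ is injective, showing that $N$ is the Jacobson radical of $A$ is equivalent to both assertions of the corollary.

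Next I would verify that $N$ is a nilpotent two-sided ideal of $A$. Using the decomposition $A = D \oplus N$, the ideal property reduces to the three inclusions $DN \subseteq N$, $ND \subseteq N$ and $N^2 \subseteq N$. Each is a short matrix computation exploiting the rigid support pattern: multiplying a block-diagonal matrix in $D$ by a matrix supported on the ten positions keeps the result supported on those same ten positions, and the only surviving products of two matrices in $N$ land in positions $(8,4)$ and $(8,6)$, which already belong to $N$. The same computation shows $N^2$ is supported on $(8,4),(8,6)$, whence $N^3 = 0$ and $N$ is nilpotent.

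Finally I would identify the complement $D$ as a semisimple subalgebra. The product of two block-diagonal matrices is again block-diagonal and respects the shape constraints of $A$, so $D$ is a subalgebra; the top-left block is a copy of the full matrix algebra $\Q^{3\times 3}$, while the diagonal scalars contribute the three independent parameters $x_{17}$, $x_{21}$ and $x_{22}$. The essential point is that the three diagonal entries at $(4,4),(6,6),(8,8)$ are all forced equal to $x_{22}$, so the five singleton positions collapse to only three one-dimensional factors rather than five; this is exactly what yields $D \cong \Q^{3\times 3}\oplus\Q\oplus\Q\oplus\Q$ and matches the dimension count $22 - 10 = 12 = 9 + 1 + 1 + 1$. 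As a product of matrix algebras over $\Q$, the algebra $D$ is semisimple and $A/N \cong D$. A nilpotent ideal with semisimple quotient is the Jacobson radical, so $J = N$; injectivity of $\beta'_{G\times G}$ then gives both that $\{c_i : i \in S\}$ is a basis of $J$ and that $\Q B(G,G)/J \cong \Q^{3\times 3}\oplus\Q\oplus\Q\oplus\Q$.

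The main obstacle is not a single hard step but the careful bookkeeping of the support pattern, and in particular keeping track of the positions tied together by the equality of the $x_{22}$-entries: it is precisely this tie that distinguishes the correct quotient $\Q^{3\times 3}\oplus\Q^3$ from the naive block-triangular guess $\Q^{3\times 3}\oplus\Q^5$, and it is the one place where a routine computation could go wrong.
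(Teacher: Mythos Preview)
Your proposal is correct. The paper states this corollary without proof, treating it as an immediate consequence of the preceding theorem on $\beta'_{G\times G}$; transporting the question through that injective algebra homomorphism and analyzing the image as a block-triangular subalgebra of $\Q^{8\times 8}$ with nilpotent off-diagonal ideal $N$ and semisimple diagonal complement $D \cong \Q^{3\times 3}\oplus\Q^3$ is precisely the intended argument, and your bookkeeping of the support pattern (including the triple occurrence of $x_{22}$) is accurate.
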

The map $\beta'_{G \times G} \colon \Q B(G, G) \to \Q^{8\times 8}$ can be regarded as
a \emph{mark homomorphism}
for the double Burnside ring of $G = S_3$.  It assigns
to each $(G,G)$-biset a square matrix of rational marks.
For example, for
\begin{align*}
b_{20} &= [(G \times G)/L_{20}] \\&=
\tfrac{1}{3} c_{1} +\tfrac{1}{3} c_{3} + c_{6} + c_{8} +\tfrac{2}{3} c_{9} +\tfrac{2}{3} c_{11} +2 c_{14} +2 c_{16} + c_{17} + c_{18} +2 c_{19} +2 c_{20}
\end{align*}
we have
\begin{align*}
  \beta'_{G \times G}(b_{20}) &=
\small\left(
  \begin{array}{cccc|cc|c|c}
\nicefrac{1}{3}&.&\nicefrac{1}{3}&.&.&.&.&.\\
.&1&.&1&.&.&.&.\\
\nicefrac{2}{3}&.&\nicefrac{2}{3}&.&.&.&.&.\\
.&.&.&.&.&.&.&.\\\hline
.&.&.&.&1&1&.&.\\
.&.&.&.&.&.&.&.\\\hline
.&.&.&.&.&.&.&.\\\hline
.&2&.&2&2&2&.&.
  \end{array}
\right)\text,&
\end{align*}
and the image of
\begin{align*}
  b_{22} = [G \times G/G] = c_1 + c_6 + c_{11} + c_{17} + c_{21} + c_{22}
\end{align*}
is the identity matrix.

While the case $G = S_3$ provides only a small example, and the above
construction involves some ad hoc measures, we expect that for many if not
all finite groups $G$ a mark homomorphism for the rational double Burnside
algebra $\Q B(G, G)$ can be constructed in a similar way.  This will be the
subject of future research.


\bibliography{paper}
\bibliographystyle{amsplain}

\end{document}